\DeclareMathOperator{\Aut}{Aut}
\theoremstyle{plain}
\newtheorem{thrm}{Theorem}[section]
\newtheorem{cor}[thrm]{Corollary}
\newtheorem{prop}[thrm]{Proposition}
\newtheorem{lem}[thrm]{Lemma}
\theoremstyle{definition}
\newtheorem{rem}[thrm]{Remark}
\crefname{thrm}{Theorem}{Theorems}
\crefname{lem}{Lemma}{Lemmas}
\crefname{cor}{Corollary}{Corollaries}
\crefname{prop}{Proposition}{Propositions}
\crefname{defn}{Definition}{Definitions}
\crefname{exm}{Example}{Examples}
\crefname{rem}{Remark}{Remarks}
\crefname{conj}{Conjecture}{Conjectures}
\crefname{quest}{Question}{Questions}
\crefname{section}{Section}{Sections}
\crefname{equation}{\unskip}{\unskip}
\crefname{enumi}{\unskip}{\unskip}
\crefname{subsection}{Subsection}{Subsections}
\newcommand{\vf}{\varphi}
\newcommand{\sg}{\sigma}
\newcommand{\m}{{}^{-1}}
\newcommand{\sst}{\subseteq}
\newcommand{\lf}{\lfloor}
\newcommand{\rf}{\rfloor}
\newcommand{\wtl}{\widetilde}
\newcommand{\wh}{\widehat}
\newcommand{\spn}{\mathrm{span}}
\newcommand{\id}{\mathrm{id}}
\newcommand{\cI}{\mathcal{I}}
\begin{document}
	\title{Isomorphisms and derivations\\ of partial flag incidence algebras}

	\author{Mykola Khrypchenko}
	\address{Departamento de Matem\'atica, Universidade Federal de Santa Catarina,  Campus Reitor Jo\~ao David Ferreira Lima, Florian\'opolis, SC, CEP: 88040--900, Brazil}
	\email{nskhripchenko@gmail.com}

	\subjclass[2010]{ Primary: 17A36; secondary: 16S50, 06A11}
	\keywords{partial flag incidence algebra, isomorphism, automorphism, derivation}
	
	\begin{abstract}
		Let $P$ and $Q$ be finite posets and $R$ a commutative unital ring. In the case where $R$ is indecomposable, we prove that the $R$-linear isomorphisms between partial flag incidence algebras $I^3(P,R)$ and $I^3(Q,R)$ are exactly those induced by poset isomorphisms between $P$ and $Q$. We also show that the $R$-linear derivations of $I^3(P,R)$ are trivial.
	\end{abstract}
	
	\maketitle
	
	\tableofcontents
	
	\section*{Introduction}

	Recall that the (classical) incidence algebra $I(P,R)$ of a locally finite poset $P$ over a commutative unital ring $R$ is the set of functions $f:P\times P\to R$, such that $f(x,y)=0$ for $x\not\le y$. The elements of $I(P,R)$ thus can be seen as functions on the set of chains of cardinality $n=2$ in $P$, where the product of any two such functions is their convolution. This extends to an arbitrary integer $n\ge 2$ by considering functions on the set of chains of cardinality $n$ (the so-called \textit{``partial $n$-flags''}) with values in $R$ and defining their convolution in a suitable way. Such functions form a ``higher-dimensional'' generalization of $I(P,R)$, called the \textit{$n$-th partial flag incidence algebra} and denoted by $I^n(P,R)$. The algebra $I^n(P,R)$ was introduced by Max Wakefield in~\cite{Wakefield16}, where he used it to define multi-indexed Whitney numbers and then express the coefficients of the Kazhdan-Lusztig polynomial of a matroid~\cite{Elias-Proudfoot-Wakefield16} in terms of these numbers. He showed that $I^n(P,R)$ is not associative and has no (left or right) identity element for $n\ge 3$, unless $P$ is an anti-chain. 
	
	The purpose of this paper is to study some algebraic properties of $I^n(P,R)$, mainly in the case, where $n=3$, $P$ is finite and $R$ is indecomposable. In \cref{sec-prelim} we introduce the elements $e_{\mathbf{x}}\in I^n(P,R)$ which form the natural basis of $I^n(P,R)$ if $|P|<\infty$. We then show in \cref{I^n-not-power-associative} that $I^n(P,R)$ is not even third-power associative, so it fails to belong to many known classes of non-associative algebras. In \cref{sec-ideals} we define the ideals $J^n_k(P,R)$ of $I^n(P,R)$ which will play a crucial role in the description of isomorphisms $I^n(P,R)\to I^n(Q,R)$ in \cref{sec-iso}. We show in \cref{Z^n-is-the-commutator} that $J^3_1(P,R)$ coincides with the commutator submodule of $I^3(P,R)$. We also describe the second and third commutator submodules of $I^3(P,R)$ in terms of $J^3_2(P,R)$ and show in \cref{quotient-Z_2-over-Z_3} that their quotient restores the pairs of elements $x<y$ in $P$, where $y$ covers $x$. This is then used in \cref{sec-iso} to solve the isomorphism problem for $3$-rd partial flag incidence algebras of finite posets over a commutative indecomposable ring (see \cref{I^3(P)-cong-I^3(Q)=>P-cong-Q}). The rest of \cref{sec-iso} is devoted to the proof that the ($R$-linear) isomorphisms between $I^3(P,R)$ and $I^3(Q,R)$ are exactly those induced by poset isomorphisms between $P$ and $Q$ (see \cref{isomorphisms-I^3(P_R)->I^3(Q_R)}). As a consequence, the group $\Aut(I^3(P,R))$ is isomorphic to $\Aut(P)$. In the final \cref{sec-der} of the paper we prove that the derivations of $I^3(P,R)$ are trivial (see \cref{derivations-I^3(P_R)}).
	
	\section{Preliminaries}\label{sec-prelim}
	
	\subsection{General non-associative algebras}
	
	We will be using only the very basic concepts from the theory of non-associative algebras (a good classical source is~\cite{Schafer}). Given a (not necessarily associative) algebra $(A,\cdot)$ over a commutative unital ring $R$ and a non-empty subset $X\sst A$ we denote by $\spn_R (X)$ the $R$-submodule generated by $X$. For any two non-empty subsets $U,V\sst A$, we denote by $UV$ the submodule $\spn_R(\{u\cdot v\mid u\in U,\ v\in V\})$. An \textit{ideal} in $A$ is an $R$-submodule $I\sst A$ such that $AI,IA\sst I$. We write $U^2$ for $UU$. If $I$ is an ideal in $A$, then by the \textit{quotient algebra} $A/I$ we mean the quotient module $A/I$ with the multiplication $(a+I)\cdot(b+I)=a\cdot b+I$. Given $a,b\in A$, we denote by $[a,b]$ the \textit{commutator} $a\cdot b-b\cdot a$ of $a$ and $b$. Thus, $(A,[\phantom{a},\phantom{a}])$ is also an $R$-algebra, and we may define the $R$-submodule $[U,V]=\spn_R(\{[u,v]\mid u\in U,\ v\in V\})$ for any two non-empty subsets $U,V\sst A$. In particular, $[A,A]$ is called the \textit{commutator submodule} of $A$, $[[A,A],[A,A]]$ the \textit{second commutator submodule} of $A$, etc. The direct sum $A\oplus B$ of two $R$-algebras (seen as $R$-modules) is an $R$-algebra under the coordinatewise multiplication. Observe that $A$ and $B$ can be identified with ideals in $A\oplus B$, such that $(A\oplus B)/B\cong A$ and $(A\oplus B)/A\cong B$. A subset $E\sst A$ is a \textit{basis} of $A$ if $A$ is a free $R$-module over $E$. 
	
	An element $e\in A$ is an \textit{idempotent}, if $e^2=e$. Two idempotents $e,f\in A$ are \textit{orthogonal}, if $e\cdot f=f\cdot e=0$. An idempotent $e\in A$ is said to be \textit{primitive} if it cannot be decomposed as $e=e_1+e_2$, where $e_1$ and $e_2$ are orthogonal idempotents, different from $0$ and $e$.
	
	\subsection{Posets}
	
	The terminology we are using here is from~\cite[3.1]{Stanley}. Let $(P,\le)$ be a partially ordered set (poset). A non-empty subset $C\sst X$ is a \textit{chain} if for any two elements $x,y\in C$ either $x\le y$ or $y\le x$. By the length $l(C)$ of a finite chain $C\sst P$ we mean $|C|-1$. The \textit{length} of a finite non-empty poset $P$, denoted by $l(P)$, is the maximum length of chains in $P$. The \textit{interval} between $x$ and $y$ in $P$ with $x\le y$ is the subset $\lf x,y\rf=\{z\in P\mid x\le z\le y\}$. A poset $P$ is said to be \textit{locally finite} if all its intervals are finite. We write $l(x,y)$ for $l(\lf x,y\rf)$.
	
	\subsection{Partial flag incidence algebras}\label{sec-pflinc}
	
	Let $(P,\le)$ be a poset. For any integer $n\ge 2$ denote
	\begin{align*}
		P^n_\le=\{(x_1,\dots,x_n)\in P^n\mid x_1\le\dots\le x_n\}.
	\end{align*}
 	We will also use $P^1_\le:=P$. Each $\mathbf{x}=(x_1,\dots,x_n)\in P^n_\le$, $n\ge 2$, determines
 	\begin{align*}
 		\cI(\mathbf{x})=\lf x_1,x_2\rf\times\dots\times\lf x_{n-1},x_n\rf\sst P^{n-1}_\le.
 	\end{align*}
 	Observe that, for all $n\ge 3$, if $\mathbf{y}\in\cI(\mathbf{x})$, then $\mathbf{z}\in\cI(\mathbf{y})$, where $\mathbf{z}=(x_2,\dots,x_{n-1})$.

 	Let $R$ be a commutative unital ring. The \textit{$n$-th flag incidence algebra~\cite{Wakefield16} of $P$ over $R$} is the set $I^n(P,R)$ of functions $f:P^n_\le\to R$ with the usual $R$-module structure and the following multiplication
 	\begin{align}\label{prod-in-I^n}
 		(fg)(\mathbf{x})=\sum_{\mathbf{y}\in\cI(\mathbf{x})}f(x_1,\mathbf{y})g(\mathbf{y},x_n).
 	\end{align}
 	In particular, $I^2(P,R)$ is the classical incidence algebra~\cite{Rota64} of $P$ over $R$.
 	
	For any $\mathbf{x}=(x_1,\dots,x_n)\in P^n_\le$ we introduce $e_{\mathbf{x}}\in I^n(P,R)$ defined as follows:
	\begin{align}\label{e_x(u)=cases}
		e_{\mathbf{x}}(\mathbf{u})=
		\begin{cases}
			1, &\mathbf{u}=\mathbf{x},\\
			0, &\mbox{otherwise}.
		\end{cases}
	\end{align}
	
	\begin{rem}
		If $P$ is finite, then $\left\{e_{\mathbf{x}}\mid \mathbf{x}\in P^n_\le\right\}$ is a basis of $I^n(P,R)$.
	\end{rem}

	\begin{prop}\label{product-e_x_1...x_n.e_y_1...y_n}
		Let $n\ge 3$ and $\mathbf{x}=(x_1,\mathbf{u})$, $\mathbf{y}=(\mathbf{v},y_n)$ be elements of $P^n_\le$, where $\mathbf{u},\mathbf{v}\in P^{n-1}_\le$. Then
		\begin{align*}
			e_{\mathbf{x}}e_{\mathbf{y}}=
			\begin{cases}
			 \sum_{\mathbf{z}\in\cI(\mathbf{u})} e_{(x_1,\mathbf{z},y_n)}, & \mathbf{u}=\mathbf{v},\\
			 0, & \mathbf{u}\ne\mathbf{v}.
			\end{cases}
		\end{align*}
	\end{prop}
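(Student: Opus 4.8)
The plan is to compute the product directly from the defining formula~\eqref{prod-in-I^n}, evaluating the function $e_{\mathbf{x}}e_{\mathbf{y}}$ at an arbitrary argument $\mathbf{w}=(w_1,\dots,w_n)\in P^n_\le$ and exploiting that the $e$'s are indicator functions, so that each summand is either $0$ or $1$. First I would write
\begin{align*}
	(e_{\mathbf{x}}e_{\mathbf{y}})(\mathbf{w})=\sum_{\mathbf{t}\in\cI(\mathbf{w})}e_{\mathbf{x}}(w_1,\mathbf{t})\,e_{\mathbf{y}}(\mathbf{t},w_n),
\end{align*}
where $\mathbf{t}=(t_1,\dots,t_{n-1})$ ranges over $\cI(\mathbf{w})$. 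By~\eqref{e_x(u)=cases}, the factor $e_{\mathbf{x}}(w_1,\mathbf{t})$ equals $1$ exactly when $(w_1,\mathbf{t})=(x_1,\mathbf{u})$, i.e.\ when $w_1=x_1$ and $\mathbf{t}=\mathbf{u}$, and vanishes otherwise; similarly $e_{\mathbf{y}}(\mathbf{t},w_n)=1$ exactly when $\mathbf{t}=\mathbf{v}$ and $w_n=y_n$. Hence a summand can be nonzero only for the single index $\mathbf{t}=\mathbf{u}=\mathbf{v}$.

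This immediately yields the dichotomy. If $\mathbf{u}\ne\mathbf{v}$, then the two indicator conditions $\mathbf{t}=\mathbf{u}$ and $\mathbf{t}=\mathbf{v}$ cannot hold simultaneously, every summand is $0$, and so $e_{\mathbf{x}}e_{\mathbf{y}}=0$, giving the second case. If $\mathbf{u}=\mathbf{v}$, then the only index that can contribute is $\mathbf{t}=\mathbf{u}$, and it actually contributes (with value $1$) precisely when $\mathbf{u}\in\cI(\mathbf{w})$, $w_1=x_1$ and $w_n=y_n$. Thus $e_{\mathbf{x}}e_{\mathbf{y}}$ is the indicator function of the set of those $\mathbf{w}\in P^n_\le$ satisfying these three conditions.

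It remains to identify this set with $\{(x_1,\mathbf{z},y_n)\mid \mathbf{z}\in\cI(\mathbf{u})\}$, which is the one genuinely bookkeeping-heavy step and the place to be careful with indices. Writing $\mathbf{w}=(x_1,\mathbf{z},y_n)$ with $\mathbf{z}=(w_2,\dots,w_{n-1})$, the condition $\mathbf{u}\in\cI(\mathbf{w})$ means $w_i\le u_i\le w_{i+1}$ for $i=1,\dots,n-1$. The boundary inequalities $x_1=w_1\le u_1$ and $u_{n-1}\le w_n=y_n$ hold automatically because $\mathbf{x},\mathbf{y}\in P^n_\le$; the interior ones combine into $u_{i-1}\le w_i\le u_i$, i.e.\ $w_i\in\lf u_{i-1},u_i\rf$ for $i=2,\dots,n-1$, which is exactly the assertion $\mathbf{z}\in\cI(\mathbf{u})$ (the forward implication being precisely the nesting observation recorded after the definition of $\cI(\mathbf{x})$, applied with $\mathbf{w}$ in place of $\mathbf{x}$ and $\mathbf{u}$ in place of $\mathbf{y}$). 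Since conversely every $\mathbf{z}\in\cI(\mathbf{u})$ produces such a $\mathbf{w}$, and distinct $\mathbf{z}$ give distinct arguments, the indicator function equals $\sum_{\mathbf{z}\in\cI(\mathbf{u})}e_{(x_1,\mathbf{z},y_n)}$, completing the proof. I expect no conceptual obstacle here beyond keeping the index shifts between $\cI(\mathbf{w})$ and $\cI(\mathbf{u})$ straight.
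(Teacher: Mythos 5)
Your proof is correct and takes essentially the same route as the paper's: both evaluate $(e_{\mathbf{x}}e_{\mathbf{y}})$ pointwise via \cref{prod-in-I^n}, use the indicator property \cref{e_x(u)=cases} to isolate the single candidate term $\mathbf{t}=\mathbf{u}=\mathbf{v}$ (giving the zero case immediately when $\mathbf{u}\ne\mathbf{v}$), and then identify the support $\{\mathbf{w}\in P^n_\le \mid w_1=x_1,\ w_n=y_n,\ \mathbf{u}\in\cI(\mathbf{w})\}$ with $\{(x_1,\mathbf{z},y_n)\mid \mathbf{z}\in\cI(\mathbf{u})\}$, using the nesting observation after the definition of $\cI$ in one direction and the chain $x_1\le u_1\le w_2\le\dots\le w_{n-1}\le u_{n-1}\le y_n$ in the other. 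No gaps.
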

	\begin{proof}
		Fix $\mathbf{p}=(p_1,\dots,p_n)\in P^n_\le$. By \cref{prod-in-I^n}
		\begin{align}\label{sum-e_x(z_1_w)e_y(w_y_n)}
			(e_{\mathbf{x}}e_{\mathbf{y}})(\mathbf{p})=\sum_{\mathbf{w}\in\cI(\mathbf p)}e_{\mathbf{x}}(p_1,\mathbf{w})e_{\mathbf{y}}(\mathbf{w},p_n).
		\end{align}
		If $\mathbf{u}\ne\mathbf{v}$, then either $\mathbf{w}\ne\mathbf{u}$, in which case $e_{\mathbf{x}}(p_1,\mathbf{w})=0$, or $\mathbf{w}\ne\mathbf{v}$, in which case $e_{\mathbf{y}}(\mathbf{w},p_n)=0$, so $(e_{\mathbf{x}}e_{\mathbf{y}})(\mathbf{p})=0$.
			
		Assume now that $\mathbf{u}=\mathbf{v}$. If the sum on the right-hand side of \cref{sum-e_x(z_1_w)e_y(w_y_n)} has a non-zero term, then by \cref{e_x(u)=cases} we have
		$p_1=x_1$, $\mathbf{w}=\mathbf{u}$ and $p_n=y_n$.
 		So, such a term is unique and its value coincides with $e_{(x_1,\mathbf{z},y_n)}(\mathbf{p})$, where $\mathbf{z}=(p_2,\dots,p_{n-1})$. Moreover, it follows from $\mathbf{w}\in\cI(\mathbf p)$ and $\mathbf{w}=\mathbf{u}$ that $\mathbf{z}\in\mathcal{I}(\mathbf{u})$. Conversely, if $e_{(x_1,\mathbf{z},y_n)}(\mathbf{p})\ne 0$ for some $\mathbf{z}\in\mathcal{I}(\mathbf{u})$, then $p_1=x_1$, $(p_2,\dots,p_{n-1})=\mathbf{z}$ and $p_n=y_n$. Hence, $x_1=p_1\le u_1\le p_2\le\dots\le p_{n-1}\le u_{n-1}\le p_n=y_n$, so that $\mathbf{u}\in\cI(\mathbf{p})$. Therefore, the sum on the right-hand side of \cref{sum-e_x(z_1_w)e_y(w_y_n)} has a (unique) non-zero term corresponding to $\mathbf{w}=\mathbf{u}$.
	\end{proof}

	\begin{cor}\label{I^n-not-power-associative}
		Let $n\ge 3$. Then $I^n(P,R)$ is not third power associative, unless $P$ is an antichain.
	\end{cor}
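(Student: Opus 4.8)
Recall that an algebra $A$ is \emph{third power associative} when $a^2a=aa^2$ holds for every $a\in A$. The plan is therefore, assuming $P$ is not an antichain, to exhibit a single element $a\in I^n(P,R)$ with $a^2a\ne aa^2$. (The exclusion is genuine: if $P$ is an antichain, the only flags are the constant ones $(x,\dots,x)$, which by \cref{product-e_x_1...x_n.e_y_1...y_n} are pairwise orthogonal idempotents, so $I^n(P,R)$ is associative and commutative.)

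First I would reduce to a covering pair. Since $P$ is not an antichain, there are $x<y$ in $P$; as the products in $I^n(P,R)$ are defined, $P$ is locally finite, so $\lf x,y\rf$ is finite and contains an element covering $x$, and replacing $y$ by it I may assume $\lf x,y\rf=\{x,y\}$. For $0\le k\le n$ let $f_k\in I^n(P,R)$ be the basis element $e_{(x,\dots,x,y,\dots,y)}$ with $k$ entries equal to $x$ and $n-k$ equal to $y$. Because the only intervals that occur among entries of such flags are $\lf x,x\rf=\{x\}$, $\lf y,y\rf=\{y\}$ and $\lf x,y\rf=\{x,y\}$, \cref{product-e_x_1...x_n.e_y_1...y_n} shows that these flags span a subalgebra of $I^n(P,R)$ and lets me read off the products I need: $f_nf_n=f_n$, $f_nf_{n-1}=f_{n-1}$ and $f_{n-1}f_{n-2}=f_{n-1}+f_{n-2}$, while every other product among $f_{n-2},f_{n-1},f_n$ vanishes.

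With these three rules in hand I would take $a=f_{n-2}+f_{n-1}+f_n$ and simply expand. A short computation gives $a^2=f_{n-2}+2f_{n-1}+f_n$, and then
\[
a^2a=2f_{n-2}+3f_{n-1}+f_n,\qquad aa^2=f_{n-2}+3f_{n-1}+f_n,
\]
so that $a^2a-aa^2=f_{n-2}=e_{(x,\dots,x,y,y)}\ne 0$. Hence $a^2a\ne aa^2$, and $I^n(P,R)$ is not third power associative.

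The only real obstacle is the bookkeeping behind the three product rules: applying \cref{product-e_x_1...x_n.e_y_1...y_n} means matching the tail $\mathbf{u}$ of the left flag with the head $\mathbf{v}$ of the right flag and then evaluating $\sum_{\mathbf{z}\in\cI(\mathbf{u})}$ over the resulting product of intervals. It is exactly here that the covering hypothesis $\lf x,y\rf=\{x,y\}$ does the work, keeping each such interval sum to at most two terms and inside the chosen subalgebra, so that the three rules hold uniformly for all $n\ge 3$ (for $n=3$ they are just the products of $e_{(x,y,y)}$, $e_{(x,x,y)}$ and $e_{(x,x,x)}$). Everything after that is elementary linear algebra over $R$.
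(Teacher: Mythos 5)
Your proof is correct and is essentially the paper's own argument: you test the very same element $a=e_{x^n}+e_{(x^{n-1},y)}+e_{(x^{n-2},y^2)}$ and compare $a^2a$ with $aa^2$, obtaining the same discrepancy (your $f_{n-2}$ is exactly the paper's $\sum_{x<z\le y}e_{(x^{n-2},z,y)}$ specialized to a covering pair). The only difference is your preliminary reduction to $\lf x,y\rf=\{x,y\}$, a harmless simplification of the bookkeeping that the paper instead handles by carrying the interval sums for an arbitrary pair $x<y$.
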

\begin{proof}
	Indeed, if $P$ is not an antichain, then there are $x<y$ in $P$. For any $x\in X$ and $2\le k\le n$ denote
	\begin{align*}
	x^k:=\underbrace{(x,\dots, x)}_k\in P^k_\le.
	\end{align*}
	Take $f=e_{x^n}+e_{(x^{n-1},y)}+e_{(x^{n-2},y^2)}$. Then by \cref{product-e_x_1...x_n.e_y_1...y_n}
	\begin{align*}
	f\cdot f=e_{x^n}+e_{(x^{n-1},y)}+\sum_{x\le z\le y}e_{(x^{n-2},z,y)}
	=e_{x^n}+2e_{(x^{n-1},y)}+\sum_{x<z\le y}e_{(x^{n-2},z,y)}.
	\end{align*}
	Observe that $e_{(x^{n-1},y)}e_{(x^{n-2},z,y)}=0$ unless $z=y$, and $e_{(x^{n-2},y^2)}e_{(x^{n-2},z,y)}=e_{(x^{n-2},z,y)}e_{(x^{n-2},y^2)}=e_{(x^{n-2},z,y)}e_{(x^{n-1},y)}=0$ for all $x<z\le y$. Hence,
	\begin{align*}
	f\cdot (f\cdot f)&=e_{x^n}+2e_{(x^{n-1},y)}+\sum_{x\le z\le y}e_{(x^{n-2},z,y)}=e_{x^n}+3e_{(x^{n-1},y)}+\sum_{x<z\le y}e_{(x^{n-2},z,y)},\\
	(f\cdot f)\cdot f&=e_{x^n}+e_{(x^{n-1},y)}+2\sum_{x\le z\le y}e_{(x^{n-2},z,y)}=e_{x^n}+3e_{(x^{n-1},y)}+2\sum_{x<z\le y}e_{(x^{n-2},z,y)}.
	\end{align*}
	Since $\sum_{x<z\le y}e_{(x^{n-2},z,y)}\ne 0$, we see that $f\cdot (f\cdot f)\ne (f\cdot f)\cdot f$.
\end{proof}

\section{Ideals}\label{sec-ideals}

We introduce, for any integer $k\ge 0$, the $R$-submodules
\begin{align}
J^n_k(P,R)=\{f\in I^n(P,R)\mid f(x_1,\dots,x_n)=0,\mbox{if } l(x_1,x_n)<k\}.
\end{align}
If $l(P)=m<\infty$, then
\begin{align*}
	I^n(P,R)=J^n_0(P,R)\supseteq J^n_1(P,R)\supseteq\dots\supseteq J^n_l(P,R)\supseteq J^n_{m+1}(P,R)=\{0\}.
\end{align*} 

\begin{lem}
	For any $k\ge 0$ the submodule $J^n_k(P,R)$ is an ideal in $I^n(P,R)$.
\end{lem}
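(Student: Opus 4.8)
The plan is to verify the two inclusions $I^n(P,R)\cdot J^n_k(P,R)\sst J^n_k(P,R)$ and $J^n_k(P,R)\cdot I^n(P,R)\sst J^n_k(P,R)$ directly, since $J^n_k(P,R)$ is an $R$-submodule by its very definition. The single fact about posets that drives everything is the monotonicity of length under interval inclusion: if $\lf a',b'\rf\sst\lf a,b\rf$, then every chain contained in $\lf a',b'\rf$ is also a chain in $\lf a,b\rf$, whence $l(a',b')\le l(a,b)$. I would record this observation first.

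For the left inclusion I would fix $f\in I^n(P,R)$, $g\in J^n_k(P,R)$ and an arbitrary $\mathbf{x}=(x_1,\dots,x_n)\in P^n_\le$ with $l(x_1,x_n)<k$, and show that $(fg)(\mathbf{x})=0$. Expanding via \cref{prod-in-I^n} as $(fg)(\mathbf{x})=\sum_{\mathbf{y}\in\cI(\mathbf{x})}f(x_1,\mathbf{y})g(\mathbf{y},x_n)$, it suffices to show that each factor $g(\mathbf{y},x_n)$ vanishes. Writing $\mathbf{y}=(y_1,\dots,y_{n-1})$, membership $\mathbf{y}\in\cI(\mathbf{x})=\lf x_1,x_2\rf\times\dots\times\lf x_{n-1},x_n\rf$ gives in particular $x_1\le y_1$, so $\lf y_1,x_n\rf\sst\lf x_1,x_n\rf$ and hence $l(y_1,x_n)\le l(x_1,x_n)<k$. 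Since the chain $(\mathbf{y},x_n)=(y_1,\dots,y_{n-1},x_n)$ has first entry $y_1$ and last entry $x_n$, the hypothesis $g\in J^n_k(P,R)$ forces $g(\mathbf{y},x_n)=0$, so every term of the sum vanishes.

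The right inclusion is entirely symmetric: for $f\in J^n_k(P,R)$, $g\in I^n(P,R)$ and $\mathbf{x}$ with $l(x_1,x_n)<k$, membership $\mathbf{y}\in\cI(\mathbf{x})$ now yields $y_{n-1}\le x_n$, so $\lf x_1,y_{n-1}\rf\sst\lf x_1,x_n\rf$ and $l(x_1,y_{n-1})<k$; as the chain $(x_1,\mathbf{y})=(x_1,y_1,\dots,y_{n-1})$ has endpoints $x_1$ and $y_{n-1}$, we get $f(x_1,\mathbf{y})=0$, again killing every term. There is no genuine obstacle here; the only point requiring care is to read off correctly which coordinates serve as the endpoints of the two shorter chains $(x_1,\mathbf{y})$ and $(\mathbf{y},x_n)$ occurring in the convolution, and to observe that the relevant intervals $\lf x_1,y_{n-1}\rf$ and $\lf y_1,x_n\rf$ both shrink into $\lf x_1,x_n\rf$.
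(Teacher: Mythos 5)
Your proof is correct and follows essentially the same route as the paper: expand the convolution via \cref{prod-in-I^n} and kill each term by observing that the interval $\lf y_1,x_n\rf$ (resp.\ $\lf x_1,y_{n-1}\rf$) sits inside $\lf x_1,x_n\rf$, forcing the factor from $J^n_k(P,R)$ to vanish. The only cosmetic difference is that you argue directly that every summand is zero while the paper phrases it as a proof by contradiction; your explicit remark on monotonicity of length under interval inclusion is a welcome clarification but not a new idea.
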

\begin{proof}
	Take $f\in J^n_k(P,R)$ and $g\in I^n(P,R)$. Assume that $(fg)(x_1,\dots,x_n)\ne 0$ for some $x_1\le\dots\le x_n$ with $l(x_1,x_n)<k$. Then there are $x_1\le y_1\le x_2\le\dots\le x_{n-1}\le y_{n-1}\le x_n$ such that $f(x_1,y_1,\dots,y_{n-1})g(y_1,\dots,y_{n-1},x_n)\ne 0$. Observe that $l(x_1,y_{n-1})\le l(x_1,x_n)<k$, so $f(x_1,y_1,\dots,y_{n-1})=0$, a contradiction. Similarly, $(gf)(x_1,\dots,x_n)\ne 0$ implies $f(y_1,\dots,y_{n-1},x_n)\ne 0$ for some $x_1\le y_1\le x_2\le\dots\le x_{n-1}\le y_{n-1}\le x_n$ contradicting $f\in J^n_k(P,R)$. 
\end{proof}


\begin{rem}\label{quotient I^n-over-Z^n}
	The quotient algebra $I^n(P,R)/J^n_1(P,R)$ is isomorphic to the direct product $\prod_{x\in P} Re_{(x,\dots, x)}$.
\end{rem}

\begin{rem}
	If $|P|<\infty$, then $J^n_k(P,R)=\spn_R\{e_{(x_1,\dots, x_n)}\mid l(x_1,x_n)\ge k\}$.
\end{rem}
We are going to give another characterization of the ideal $J^n_1(P,R)$. For simplicity, we will consider $n=3$ and $|P|<\infty$, since this is the case we will be dealing with in the rest of the paper. From now on we will write $e_{xyz}$ for $e_{(x,y,z)}\in I^3(P,R)$.

\begin{prop}\label{Z^n-is-the-commutator}
	The ideal $J^3_1(P,R)$ coincides with $[I^3(P,R),I^3(P,R)]$.
\end{prop}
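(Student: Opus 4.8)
The plan is to prove the two inclusions separately, relying on the product formula from \cref{product-e_x_1...x_n.e_y_1...y_n}. First I would record the explicit description $J^3_1(P,R)=\spn_R\{e_{xyz}\mid x<z\}$: indeed, $l(x,z)<1$ is equivalent to $l(x,z)=0$, i.e.\ to $x=z$, which under $x\le y\le z$ forces $x=y=z$; hence the basis elements lying outside $J^3_1(P,R)$ are exactly the $e_{xxx}$.

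For the inclusion $[I^3(P,R),I^3(P,R)]\sst J^3_1(P,R)$ I would invoke \cref{quotient I^n-over-Z^n}: the quotient $I^3(P,R)/J^3_1(P,R)$ is isomorphic to $\prod_{x\in P}Re_{xxx}$, which is a commutative algebra. Consequently, for all $f,g\in I^3(P,R)$ the image of $[f,g]=fg-gf$ in the quotient vanishes, that is, $[f,g]\in J^3_1(P,R)$. Since these commutators span $[I^3(P,R),I^3(P,R)]$, the inclusion follows.

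For the reverse inclusion it suffices to exhibit each spanning element $e_{xyz}$ with $x<z$ as a commutator. The key computation is that, by \cref{product-e_x_1...x_n.e_y_1...y_n} with $n=3$ and $\cI(y,y)=\lf y,y\rf=\{y\}$,
\begin{align*}
	e_{xyy}e_{yyz}=e_{xyz},
\end{align*}
whereas the opposite product $e_{yyz}e_{xyy}$ is nonzero only when $(y,z)=(x,y)$, i.e.\ when $x=y=z$; since $x<z$ this cannot occur, so $e_{yyz}e_{xyy}=0$. Therefore $[e_{xyy},e_{yyz}]=e_{xyz}\in[I^3(P,R),I^3(P,R)]$, and the two inclusions together give the claim.

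I expect the only step requiring genuine insight to be the choice of factors $e_{xyy}$ and $e_{yyz}$ in the reverse inclusion: the product formula collapses to a single basis element precisely because the interval $\lf y,y\rf$ is a singleton, and the strictness $x<z$ is exactly what kills the opposite product, turning the product into a commutator. Everything else is bookkeeping with the definitions and the already-established structure of the quotient.
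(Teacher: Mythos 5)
Your proof is correct and follows essentially the same route as the paper: the paper also establishes the reverse inclusion via the identical commutator $e_{xyz}=e_{xyy}e_{yyz}=[e_{xyy},e_{yyz}]$, killing the opposite product by the strictness $x<z$. The only cosmetic difference is in the easy inclusion, where the paper simply observes directly that $[f,g]\in J^3_1(P,R)$ (the value of $fg$ at $(x,x,x)$ is $f(x,x,x)g(x,x,x)$, symmetric in $f$ and $g$), while you deduce the same fact from the commutativity of the quotient in \cref{quotient I^n-over-Z^n}.
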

\begin{proof}
	The inclusion $[I^3(P,R),I^3(P,R)]\sst J^3_1(P,R)$ is obvious because $[f,g]\in J^3_1(P,R)$ for all $f,g\in I^3(P,R)$. For the converse inclusion we will prove that $e_{xyz}\in [I^3(P,R),I^3(P,R)]$ for all $x\le y\le z$ with $l(x,z)\ge 1$. Indeed, in this case $x<y$ or $y<z$, so $e_{yyz}e_{xyy}=0$ and $e_{xyz}=e_{xyy}e_{yyz}=[e_{xyy},e_{yyz}]$.
\end{proof}

We now define recursively $Z^n_1(P,R):=J^n_1(P,R)$ and 
\begin{align*}
Z^n_{i+1}(P,R)=[Z^n_i(P,R),Z^n_i(P,R)]
\end{align*}
for $i\in\{1,2\}$.

\begin{lem}\label{Z^3_2(P_R)-basis}
	The submodule $Z^3_2(P,R)$ coincides with
		\begin{align}\label{Z_2-is-span}
		Z^3_1(P,R)^2=\spn_R\left\{e_{xxy}+e_{xyy}\mid l(x,y)=1\right\}\oplus J^3_2(P,R).
		\end{align}
	In particular, $Z^3_2(P,R)$ is a subalgebra of $I^3(P,R)$.
\end{lem}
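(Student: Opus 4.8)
The plan is to reduce everything to products of basis elements and exploit the product rule of \cref{product-e_x_1...x_n.e_y_1...y_n}, which for $n=3$ reads $e_{abc}e_{bcf}=\sum_{b\le z\le c}e_{azf}$ whenever $a\le b\le c\le f$, the product being $0$ unless the middle pair of the first factor equals the first pair of the second. The first observation I would record is that for the reversed product $e_{bcf}e_{abc}$ to be non-zero one would need $c=a$ and $f=b$; since every basis element of $Z^3_1(P,R)$ has distinct outer entries (i.e.\ $a<c$), this never happens, so $e_{bcf}e_{abc}=0$ and hence $e_{abc}e_{bcf}=[e_{abc},e_{bcf}]$. Consequently every non-zero product of two basis elements of $Z^3_1(P,R)$ is already a commutator, which immediately gives $Z^3_1(P,R)^2\sst Z^3_2(P,R)$; the reverse inclusion $Z^3_2(P,R)\sst Z^3_1(P,R)^2$ is clear since $[u,v]=uv-vu$ with $uv,vu\in Z^3_1(P,R)^2$. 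This settles $Z^3_2(P,R)=Z^3_1(P,R)^2$ and reduces the lemma to computing $Z^3_1(P,R)^2$.

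To compute $Z^3_1(P,R)^2$ I would group basis vectors by their outer pair: for $a<f$ put $M_{a,f}=\spn_R\{e_{azf}\mid z\in\lf a,f\rf\}$. Since each non-zero product $e_{abc}e_{bcf}=\sum_{z\in\lf b,c\rf}e_{azf}$ lands in a single $M_{a,f}$, the module $Z^3_1(P,R)^2$ is the direct sum over all $a<f$ of $W_{a,f}:=Z^3_1(P,R)^2\cap M_{a,f}$, and $W_{a,f}$ is spanned by the interval sums $\sum_{z\in\lf b,c\rf}e_{azf}$ subject to $a\le b\le c\le f$, $a<c$ and $b<f$ (the last two conditions being exactly $e_{abc},e_{bcf}\in Z^3_1(P,R)$). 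The analysis then splits on $l(a,f)$. If $l(a,f)=1$ then $\lf a,f\rf=\{a,f\}$, the only admissible interval is $\lf a,f\rf$ itself, and $W_{a,f}=\spn_R\{e_{aaf}+e_{aff}\}$. If $l(a,f)\ge 2$, a singleton interval $\lf z,z\rf$ with $a<z<f$ yields $e_{azf}$ outright; then choosing any intermediate $w$ with $a<w<f$ (which exists because $l(a,f)\ge 2$), the interval $\lf a,w\rf$ together with these singletons recovers $e_{aaf}$ by inclusion--exclusion, and symmetrically $\lf w,f\rf$ recovers $e_{aff}$. Hence $W_{a,f}=M_{a,f}\sst J^3_2(P,R)$.

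Summing over outer pairs, the blocks with $l(a,f)\ge 2$ reassemble exactly $J^3_2(P,R)$, while the blocks with $l(a,f)=1$ contribute $\spn_R\{e_{xxy}+e_{xyy}\mid l(x,y)=1\}$; since level-one basis vectors do not occur in $J^3_2(P,R)$, the sum is direct, yielding the description \cref{Z_2-is-span}. The ``in particular'' then follows at once: as $Z^3_1(P,R)$ is an ideal, $Z^3_1(P,R)^2\cdot Z^3_1(P,R)^2\sst Z^3_1(P,R)\cdot Z^3_1(P,R)=Z^3_1(P,R)^2$, so $Z^3_2(P,R)=Z^3_1(P,R)^2$ is closed under multiplication. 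I expect the main obstacle to be the case $l(a,f)\ge 2$: the two ``endpoint'' vectors $e_{aaf}$ and $e_{aff}$ are not single products but only emerge after cancelling interior singleton terms from an interval sum, and this inclusion--exclusion runs over the order intervals of $\lf a,f\rf$ rather than over a linearly ordered range, so one must check carefully that every interior $e_{azf}$ is indeed realised as a singleton.
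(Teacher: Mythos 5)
Your proof is correct, but it is organized quite differently from the paper's. The paper proves the two inclusions of \cref{Z_2-is-span} separately: it exhibits each generator of the right-hand side as an explicit commutator ($e_{xxy}+e_{xyy}=[e_{xxy},e_{xyy}]$, $e_{xyz}=[e_{xyy},e_{yyz}]$ for $x<y<z$, and for $e_{xxz},e_{xzz}$ with $l(x,z)\ge 2$ the same intermediate-point subtraction you use), and for the reverse inclusion it characterizes the right-hand side inside $J^3_1(P,R)$ by the linear conditions $h(x,x,y)=h(x,y,y)$ on covering pairs, verifying $(fg)(x,x,y)=f(x,x,y)g(x,y,y)=(fg)(x,y,y)$ for all $f,g\in Z^3_1(P,R)$. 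Your two departures are both sound and arguably cleaner. First, your opening observation that the reversed product of any two basis vectors of $J^3_1(P,R)$ vanishes (nonvanishing of $e_{bcf}e_{abc}$ would force $(c,f)=(a,b)$, hence $c=a$, against $a<c$) yields $Z^3_2(P,R)=Z^3_1(P,R)^2$ immediately by bilinearity; the paper obtains this equality only a posteriori and re-derives the ``product equals commutator'' identity separately for each generator. Second, your exact computation of $Z^3_1(P,R)^2$ as the span of the interval sums $\sum_{b\le z\le c}e_{azf}$, decomposed block-by-block over outer pairs --- legitimate, since each interval sum lies in a single block of the free module $J^3_1(P,R)=\bigoplus_{a<f}M_{a,f}$ with $M_{a,f}=\spn_R\{e_{azf}\mid a\le z\le f\}$ --- replaces the paper's coordinate criterion; in a block with $l(a,f)=1$ the only admissible pair is $(b,c)=(a,f)$, which is exactly why that block is $R(e_{aaf}+e_{aff})$. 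Your subalgebra argument $Z^3_1(P,R)^2\cdot Z^3_1(P,R)^2\sst Z^3_1(P,R)\cdot Z^3_1(P,R)=Z^3_1(P,R)^2$ also avoids the paper's appeal to orthogonal idempotents. What the paper's formulation buys is reuse: the covering-pair identity $f(x,x,y)=f(x,y,y)$ for $f\in Z^3_2(P,R)$ is precisely what feeds into the computation of $Z^3_3(P,R)$ in \cref{Z^3_3(P_R)-basis}, whereas from your span description one would extract it as a (trivial) corollary there. Finally, the worry you flag at the end is unfounded: every interior $e_{azf}$ with $a<z<f$ is realised as the singleton product $e_{azz}e_{zzf}$, admissible since $a<z$ and $z<f$, so your inclusion--exclusion only ever subtracts vectors already obtained, with no dependence on the order structure of $\lf a,f\rf$ beyond the existence of one intermediate element.
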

\begin{proof}
	Denote the right-hand side of \cref{Z_2-is-span} by $V$. If $l(x,y)=1$, then $e_{xxy}+e_{xyy}=e_{xxy}e_{xyy}=[e_{xxy},e_{xyy}]\in Z^3_2(P,R)\cap Z^3_1(P,R)^2$. Let $e_{xyz}\in J^3_2(P,R)$. If $x<y<z$, then $e_{xyz}=e_{xyy}e_{yyz}=[e_{xyy},e_{yyz}]\in Z^3_2(P,R)\cap Z^3_1(P,R)^2$. Let now $x<y=z$. Choose $u\in P$ such that $x<u<z$. Then $[e_{xuz},e_{uzz}]=e_{xuz}e_{uzz}=e_{xzz}+\sum_{u\le v<z}e_{xvz}$. Since $x<u\le v<z$, then $e_{xvz}\in Z^3_2(P,R)\cap Z^3_1(P,R)^2$ as proved above. Therefore, $e_{xyz}=e_{xzz}\in Z^3_2(P,R)\cap Z^3_1(P,R)^2$. Similarly, it follows from $[e_{xxu},e_{xuz}]=e_{xxu}e_{xuz}=e_{xxz}+\sum_{x<v\le u}e_{xvz}$ that $e_{xxz}\in Z^3_2(P,R)\cap Z^3_1(P,R)^2$. Thus, $V\sst Z^3_2(P,R)\cap Z^3_1(P,R)^2$.
	
	Conversely, if $f,g\in Z^3_1(P,R)$ and $l(x,y)=1$, then
	\begin{align*}
	(fg)(x,x,y)=f(x,x,x)g(x,x,y)+f(x,x,y)g(x,y,y)=f(x,x,y)g(x,y,y).
	\end{align*}
	Similarly,
	\begin{align*}
	(fg)(x,y,y)=f(x,x,y)g(x,y,y)+f(x,y,y)g(y,y,y)=f(x,x,y)g(x,y,y).
	\end{align*}
	Hence, $(fg)(x,x,y)=(fg)(x,y,y)$. By symmetry $(gf)(x,x,y)=(gf)(x,y,y)$. This shows that $fg,[f,g]\in V$. Thus, $Z^3_2(P,R)+Z^3_1(P,R)^2\sst V$, proving $Z^3_2(P,R)=Z^3_1(P,R)^2=V$ and thus establishing \cref{Z_2-is-span}.
	
	Since $J^3_2(P,R)$ is an ideal in $I^3(P,R)$, for $Z^3_2(P,R)$ to be a subalgebra, it suffices that $\spn_R\left\{e_{xxy}+e_{xyy}\mid l(x,y)=1\right\}$ be a subalgebra of $I^3(P,R)$. But this is obvious, because $\{e_{xxy}+e_{xyy}\}_{l(x,y)=1}$ are pairwise orthogonal idempotents of $I^3(P,R)$.
\end{proof}

\begin{lem}\label{Z^3_3(P_R)-basis}
	The submodule $Z^3_3(P,R)$ coincides with $J^3_2(P,R)$. In particular, $Z^3_3(P,R)$ is an ideal in $I^3(P,R)$.
\end{lem}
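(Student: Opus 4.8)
The plan is to prove both inclusions $Z^3_3(P,R)\sst J^3_2(P,R)$ and $J^3_2(P,R)\sst Z^3_3(P,R)$; the first is routine, while the second is the heart of the matter. For the inclusion $\sst$ I would use the decomposition from \cref{Z^3_2(P_R)-basis}, writing $Z^3_2(P,R)=W\oplus J^3_2(P,R)$ with $W=\spn_R\{e_{xxy}+e_{xyy}\mid l(x,y)=1\}$. Since the generators $e_{xxy}+e_{xyy}$ of $W$ are pairwise orthogonal idempotents, $W$ is commutative, so $[W,W]=0$; and since $J^3_2(P,R)$ is an ideal, every commutator $[w,j]$, $[j,w]$, $[j,j']$ with $w\in W$ and $j,j'\in J^3_2(P,R)$ lies in $J^3_2(P,R)$. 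Expanding $[w+j,w'+j']$ bilinearly then yields $Z^3_3(P,R)=[Z^3_2(P,R),Z^3_2(P,R)]\sst J^3_2(P,R)$.

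For the reverse inclusion it suffices to show $e_{xyz}\in Z^3_3(P,R)$ for every basis element of $J^3_2(P,R)$, that is, for every $e_{xyz}$ with $l(x,z)\ge 2$. Write $g_{xy}:=e_{xxy}+e_{xyy}$ for the idempotents above. The principal case is $x<y<z$ (all entries distinct): here I would set $a:=e_{xyy}$ if $l(x,y)\ge 2$ and $a:=g_{xy}$ if $l(x,y)=1$, and likewise $b:=e_{yyz}$ or $b:=g_{yz}$ according to the value of $l(y,z)$; in either event $a,b\in Z^3_2(P,R)$. A direct application of \cref{product-e_x_1...x_n.e_y_1...y_n} shows that among the expanded products the only surviving term is $e_{xyy}e_{yyz}=e_{xyz}$ (the contributions of the $e_{xxy}$ and $e_{yzz}$ summands all vanish) and that $ba=0$, so $[a,b]=e_{xyz}\in Z^3_3(P,R)$.

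It remains to treat the degenerate basis elements $e_{xyy}$ (with $l(x,y)\ge 2$) and $e_{xxz}$ (with $l(x,z)\ge 2$), and this is the step I expect to be the main obstacle, since the naive commutators no longer isolate a single basis vector. For $e_{xyy}$ I would choose $u$ with $x<u$ and $l(u,y)=1$, which is possible because $l(x,y)\ge 2$; then \cref{product-e_x_1...x_n.e_y_1...y_n} gives $[e_{xuy},g_{uy}]=e_{xuy}+e_{xyy}$. Since $x<u<y$, the element $e_{xuy}$ is of the all-distinct type already shown to lie in $Z^3_3(P,R)$, whence $e_{xyy}=[e_{xuy},g_{uy}]-e_{xuy}\in Z^3_3(P,R)$. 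The element $e_{xxz}$ is handled symmetrically, choosing $u$ with $l(x,u)=1$ and $u<z$ and using the identity $[g_{xu},e_{xuz}]=e_{xxz}+e_{xuz}$ together with $e_{xuz}\in Z^3_3(P,R)$.

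This exhausts all basis elements of $J^3_2(P,R)$ and gives $J^3_2(P,R)\sst Z^3_3(P,R)$, hence $Z^3_3(P,R)=J^3_2(P,R)$. Finally, $Z^3_3(P,R)$ is an ideal in $I^3(P,R)$ because it equals $J^3_2(P,R)$, which was already shown to be one.
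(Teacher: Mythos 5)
Your proof of the hard inclusion $J^3_2(P,R)\sst Z^3_3(P,R)$ is correct and essentially the paper's argument: the paper also obtains $e_{xyz}$ for $x<y<z$ from the single identity $[e_{xxy}+e_{xyy},\,e_{yyz}+e_{yzz}]=e_{xyz}$, observing that $e_{xxy}+e_{xyy}\in Z^3_2(P,R)$ \emph{regardless} of $l(x,y)$ (both summands lie in $J^3_2(P,R)$ when $l(x,y)\ge 2$), which makes your four-way case split on $l(x,y)$ and $l(y,z)$ unnecessary; and it handles the degenerate elements $e_{xzz}$, $e_{xxz}$ via an interpolating point $u$ exactly as you do --- your extra requirement $l(u,y)=1$ merely collapses the paper's correction sum $\sum_{u\le v<z}e_{xvz}$ to a single term, a mild simplification.

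The genuine flaw is in your proof of the easy inclusion $Z^3_3(P,R)\sst J^3_2(P,R)$: the claim that $W=\spn_R\{e_{xxy}+e_{xyy}\mid l(x,y)=1\}$ is commutative because its generators are pairwise orthogonal idempotents is false. Writing $g_{xy}=e_{xxy}+e_{xyy}$, if $x<y<v$ with $l(x,y)=l(y,v)=1$ (two consecutive covers, which exist whenever $l(P)\ge 2$), then \cref{product-e_x_1...x_n.e_y_1...y_n} gives $g_{xy}g_{yv}=e_{xyy}e_{yyv}=e_{xyv}\ne 0$ while $g_{yv}g_{xy}=0$, so $[g_{xy},g_{yv}]=e_{xyv}\ne 0$; in fact $W$ is not even closed under multiplication, and the $g_{xy}$ are orthogonal only modulo $J^3_2(P,R)$ (which is all that \cref{quotient-Z_2-over-Z_3} actually requires). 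The gap is local and repairable: for distinct covering pairs, $[g_{xy},g_{uv}]$ equals $0$, or $e_{xyv}$ (when $y=u$), or $-e_{uvy}$ (when $v=x$), and the two nonzero cases cannot occur simultaneously, so $[W,W]\sst J^3_2(P,R)$ always; with $[W,W]=0$ replaced by this inclusion, your bilinear expansion goes through. The paper sidesteps idempotents here entirely: by \cref{Z^3_2(P_R)-basis} every $f\in Z^3_2(P,R)$ satisfies $f(x,x,y)=f(x,y,y)$ for covers, and a one-line convolution computation then yields $[f,g](x,x,y)=f(x,x,y)g(x,y,y)-g(x,x,y)f(x,y,y)=0$, and likewise at $(x,y,y)$, which is arguably the cleaner route.
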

\begin{proof}
	Let $e_{xyz}\in J^3_2(P,R)$. Consider first the case $x<y<z$. We have $e_{xyz}=(e_{xxy}+e_{xyy})(e_{yyz}+e_{yzz})=[e_{xxy}+e_{xyy},e_{yyz}+e_{yzz}]$, where $e_{xxy}+e_{xyy},e_{yyz}+e_{yzz}\in Z^3_2(P,R)$ independently of $l(x,y)$ and $l(y,z)$ by \cref{Z^3_2(P_R)-basis}. Hence, $e_{xyz}\in Z^3_3(P,R)$. Now let $x<y=z$. As in the proof of \cref{Z^3_2(P_R)-basis}, we choose $u\in P$ such that $x<u<z$ and write $e_{xzz}=[e_{xuz},e_{uuz}+e_{uzz}]-\sum_{u\le v<z}e_{xvz}$. Since $e_{xuz},e_{uuz}+e_{uzz}\in Z^3_2(P,R)$ by \cref{Z^3_2(P_R)-basis} and $e_{xvz}\in Z^3_3(P,R)$ for all $u\le v<z$, we conclude that $e_{xyz}=e_{xzz}\in Z^3_3(P,R)$. The case $x=y<z$ is similar. Thus, $J^3_2(P,R)\sst Z^3_3(P,R)$.
	
	Conversely, take $f,g\in Z^3_2(P,R)$ and $x\le y$ such that $l(x,y)=1$. By \cref{Z^3_2(P_R)-basis} we have $f(x,x,y)=f(x,y,y)$ and $g(x,x,y)=g(x,y,y)$. As in the proof of \cref{Z^3_2(P_R)-basis} we calculate
	\begin{align*}
	[f,g](x,x,y)=f(x,x,y)g(x,y,y)-g(x,x,y)f(x,y,y)=0.
	\end{align*}
	Analogously, $[f,g](x,y,y)=0$. Hence, $Z^3_3(P,R)\sst J^3_2(P,R)$.
\end{proof}

\begin{prop}\label{quotient-Z_2-over-Z_3}
	One has the following isomorphism of $R$-algebras:
	\begin{align}\label{iso-Z_2-over-Z_3}
	Z^3_2(P,R)/Z^3_3(P,R)\cong \bigoplus_{l(x,y)=1}R(e_{xxy}+e_{xyy}).
	\end{align}
\end{prop}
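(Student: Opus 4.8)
The plan is to read off the statement almost directly from \cref{Z^3_2(P_R)-basis,Z^3_3(P_R)-basis}, so that essentially no new computation is required. Write $S:=\spn_R\{e_{xxy}+e_{xyy}\mid l(x,y)=1\}$. By \cref{Z^3_2(P_R)-basis} we have the $R$-module decomposition $Z^3_2(P,R)=S\oplus J^3_2(P,R)$, and $S$ is a subalgebra of $I^3(P,R)$; by \cref{Z^3_3(P_R)-basis} we have $Z^3_3(P,R)=J^3_2(P,R)$, which is an ideal of $I^3(P,R)$ and hence of $Z^3_2(P,R)$, so that the quotient algebra $Z^3_2(P,R)/Z^3_3(P,R)$ is well defined.

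First I would consider the canonical projection $\pi\colon Z^3_2(P,R)\to Z^3_2(P,R)/Z^3_3(P,R)$ and restrict it to $S$. Because $Z^3_3(P,R)=J^3_2(P,R)$ is exactly the summand complementary to $S$ in the decomposition above, $\pi|_S$ is an $R$-module isomorphism onto $Z^3_2(P,R)/Z^3_3(P,R)$: it is injective since $S\cap J^3_2(P,R)=\{0\}$, and surjective since $Z^3_2(P,R)=S+J^3_2(P,R)$. Moreover $\pi$ is an algebra homomorphism and $S$ is a subalgebra, so $\pi|_S$ is a homomorphism of $R$-algebras; being bijective, it is an isomorphism $S\cong Z^3_2(P,R)/Z^3_3(P,R)$.

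It remains to identify $S$ with the external direct sum on the right-hand side of \cref{iso-Z_2-over-Z_3}. Here the obstacle---such as it is---is purely bookkeeping about the multiplicative structure, and it is already settled by the \emph{in particular} clause of \cref{Z^3_2(P_R)-basis}: the elements $e_{xxy}+e_{xyy}$ with $l(x,y)=1$ are pairwise orthogonal idempotents, and they form a basis of the free $R$-module $S$. Consequently each $R(e_{xxy}+e_{xyy})$ is a subalgebra isomorphic to $R$, products of elements coming from distinct summands vanish, and $S$ is the internal direct sum of these subalgebras. Thus the coordinatewise multiplication on $\bigoplus_{l(x,y)=1}R(e_{xxy}+e_{xyy})$ matches the multiplication induced from $S$, giving $S\cong\bigoplus_{l(x,y)=1}R(e_{xxy}+e_{xyy})$ as $R$-algebras. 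Composing the two isomorphisms yields \cref{iso-Z_2-over-Z_3}.
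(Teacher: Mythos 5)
Your proposal is correct and takes essentially the same route as the paper's proof: both deduce $Z^3_2(P,R)/Z^3_3(P,R)\cong \spn_R\{e_{xxy}+e_{xyy}\mid l(x,y)=1\}$ directly from \cref{Z^3_2(P_R)-basis,Z^3_3(P_R)-basis}, and then use the pairwise orthogonality of the idempotents $e_{xxy}+e_{xyy}$ to identify this span with the direct sum in \cref{iso-Z_2-over-Z_3}. You simply spell out details the paper leaves implicit, namely that the restriction of the canonical projection to the complementary summand is a bijective algebra homomorphism.
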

\begin{proof}
	In view of \cref{Z^3_3(P_R)-basis,Z^3_2(P_R)-basis} there is an isomorphism of $R$-algebras $Z^3_2(P,R)/Z^3_3(P,R)\cong \spn_R\left\{e_{xxy}+e_{xyy}\mid l(x,y)=1\right\}$. But $\{e_{xxy}+e_{xyy}\}_{l(x,y)=1}$ are pairwise orthogonal idempotents of $I^3(P,R)$, so $\spn_R\left\{e_{xxy}+e_{xyy}\mid l(x,y)=1\right\}$ is isomorphic to $\bigoplus_{l(x,y)=1}R(e_{xxy}+e_{xyy})$ proving \cref{iso-Z_2-over-Z_3}.
\end{proof}

\section{Isomorphisms}\label{sec-iso}

	\subsection{The isomorphism problem}

It is well-known~\cite{St} that the isomorphism problem for usual incidence algebras of locally finite posets over fields has positive solution, i.e., $I(P,R)$ and $I(Q,R)$ are isomorphic if and only if $P$ and $Q$ are isomorphic. This result extends to more general ordered sets~\cite{Belding73,Khripchenko-Novikov09} and coefficient rings~\cite{Voss80,Froelich85,Parmenter-Schmerl-Spiegel90}. We are going to show that the same holds for $3$-rd partial flag incidence algebras of finite posets over indecomposable commutative rings.

Throughout this section we thus assume that $R$ is an indecomposable commutative ring. All the posets are assumed to be finite and all the isomorphisms are $R$-linear. Observe that any isomorphism $\Phi:I^3(P,R)\to I^3(Q,R)$ maps $Z^n_i(P,R)$ to $Z^n_i(Q,R)$, $i=1,2,3$.

	\begin{rem}\label{Phi-maps-Z^n-to-Z^n}
		Let $\Phi:I^3(P,R)\to I^3(Q,R)$ be an isomorphism. Then $\Phi(J^3_1(P,R))=J^3_1(Q,R)$, so $\Phi$ induces an isomorphism $\wtl\Phi:I^3(P,R)/J^3_1(P,R)\to I^3(Q,R)/J^3_1(Q,R)$.
	\end{rem}
	\begin{proof}
		This follows from \cref{Z^n-is-the-commutator}.
	\end{proof}

We introduce the following short notation: $e_x:=e_{xxx}$ for any $x\in P$. Then $\{e_x\mid x\in P\}$ is a set of pairwise orthogonal idempotents of $I^3(P,R)$.

	\begin{cor}\label{Phi-maps-e_x-to-e_vf(x)+rho_x}
		Any isomorphism $\Phi:I^3(P,R)\to I^3(Q,R)$ induces a bijection $\vf:P\to Q$ such that $\wtl\Phi(e_x+J^3_1(P,R))=e_{\vf(x)}+J^3_1(Q,R)$.
	\end{cor}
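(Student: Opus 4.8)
The plan is to exploit the isomorphism $\wtl\Phi$ from \cref{Phi-maps-Z^n-to-Z^n} together with the explicit description of the quotient $I^3(P,R)/J^3_1(P,R)$ in \cref{quotient I^n-over-Z^n} as the direct product $\prod_{x\in P}Re_x$ (writing $e_x:=e_{xxx}$, and identifying the summand with its image modulo $J^3_1(P,R)$). The key observation is that the classes $\bar e_x:=e_x+J^3_1(P,R)$ form a complete set of pairwise orthogonal \emph{primitive} idempotents in this quotient, and that their sum is the identity. Since $\wtl\Phi$ is an $R$-algebra isomorphism, it must carry this distinguished family of idempotents to the corresponding family $\{\bar e_w\mid w\in Q\}$ in $I^3(Q,R)/J^3_1(Q,R)$. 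Matching them up will define the desired bijection $\vf$.

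The main steps, in order, are as follows. First, from \cref{quotient I^n-over-Z^n} the quotient $I^3(P,R)/J^3_1(P,R)$ is isomorphic to $\prod_{x\in P}Re_x$, a direct product of $|P|$ copies of $R$; I would record that the $\bar e_x$ are nonzero orthogonal idempotents summing to $\bar 1$, so $\{\bar e_x\}$ is a complete orthogonal family. Second, I would argue that each $\bar e_x$ is primitive: a nontrivial orthogonal decomposition $\bar e_x=f_1+f_2$ inside the direct-product algebra would have to occur in the single component $Re_x$, forcing a nontrivial idempotent decomposition of the unit of $R$, which contradicts $R$ being indecomposable. Third, since $\wtl\Phi$ is an isomorphism onto $\prod_{w\in Q}Re_w$, it sends $\{\bar e_x\mid x\in P\}$ to a complete set of pairwise orthogonal primitive idempotents summing to the image of $\bar 1$. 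Fourth, I would show that in $\prod_{w\in Q}Re_w$ the \emph{only} complete families of orthogonal primitive idempotents are exactly $\{\bar e_w\mid w\in Q\}$ (again using indecomposability of $R$, so that each component contributes a single primitive idempotent, namely $\bar e_w$). Consequently $\wtl\Phi(\bar e_x)=\bar e_{\vf(x)}$ for a uniquely determined $\vf(x)\in Q$, and the correspondence $x\mapsto\vf(x)$ is a bijection because $\wtl\Phi$ is a bijection preserving these distinguished idempotents (its inverse induces $\vf\m$).

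The step I expect to be the main obstacle is the fourth one: pinning down that indecomposability of $R$ forces the primitive orthogonal idempotents of the finite direct product $\prod_{w}Re_w$ to coincide with the canonical family $\{\bar e_w\}$, rather than merely guaranteeing their \emph{existence}. The subtlety is that in a general commutative ring a product decomposition is governed by idempotents of $R$ itself; I would use that $R$ has no nontrivial idempotents to conclude that each $Re_w$ is a directly indecomposable component, so that a primitive idempotent of the product must be supported on a single component and equal the generator there. Once this rigidity is established, the induced map $\vf$ and its bijectivity follow formally, and the identity $\wtl\Phi(e_x+J^3_1(P,R))=e_{\vf(x)}+J^3_1(Q,R)$ is exactly the statement to be proved.
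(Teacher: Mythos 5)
Your proof is correct and essentially identical to the paper's: both identify $\{e_x+J^3_1(P,R)\}_{x\in P}$ as exactly the nonzero primitive idempotents of $I^3(P,R)/J^3_1(P,R)\cong\prod_{x\in P}Re_x$ (using indecomposability of $R$ to rule out nontrivial idempotents in each factor, so a primitive idempotent must be supported on a single component) and then observe that the isomorphism $\wtl\Phi$ carries this set bijectively onto $\{e_w+J^3_1(Q,R)\}_{w\in Q}$. The only difference is cosmetic: your detour through complete orthogonal families summing to the identity is unnecessary, since pinning down the full set of primitive idempotents already suffices.
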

	\begin{proof}
		Since $R$ is indecomposable, $\{e_x+J^3_1(P,R)\}_{x\in P}$ is the set of all primitive idempotents of $I^3(P,R)/J^3_1(P,R)$ by \cref{quotient I^n-over-Z^n}. Hence,  $\wtl\Phi$ maps bijectively $\{e_x+J^3_1(P,R)\}_{x\in P}$ onto $\{e_y+J^3_1(Q,R)\}_{y\in Q}$.
	\end{proof}

	\begin{cor}\label{Phi(e_xxy+e_xyy)}
		Let $\Phi:I^3(P,R)\to I^3(Q,R)$ be an isomorphism. Then for all $x<y$ with $l(x,y)=1$ there exist $u<v$ with $l(u,v)=1$ and $\sg_{xy}\in J^3_2(Q,R)$ such that $\Phi(e_{xxy}+e_{xyy})=e_{uuv}+e_{uvv}+\sg_{xy}$.
	\end{cor}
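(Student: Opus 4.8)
The plan is to pass to the quotient $Z^3_2(P,R)/Z^3_3(P,R)$ and exploit that an algebra isomorphism preserves primitive idempotents, which by \cref{quotient-Z_2-over-Z_3} are exactly the cosets of the elements $e_{xxy}+e_{xyy}$.

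First I would record that, since $\Phi$ maps $Z^3_2(P,R)$ onto $Z^3_2(Q,R)$ and $Z^3_3(P,R)$ onto $Z^3_3(Q,R)$, it induces an $R$-algebra isomorphism
\[
\bar\Phi:Z^3_2(P,R)/Z^3_3(P,R)\to Z^3_2(Q,R)/Z^3_3(Q,R).
\]
By \cref{quotient-Z_2-over-Z_3} the source is identified with $\bigoplus_{l(x,y)=1}R(e_{xxy}+e_{xyy})$ and the target with $\bigoplus_{l(u,v)=1}R(e_{uuv}+e_{uvv})$.

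Next I would determine the primitive idempotents of such a direct sum. An idempotent of $\bigoplus_i Rc_i$, with the $c_i$ pairwise orthogonal idempotents each spanning a copy of $R$, has the form $\sum_i r_ic_i$ with every $r_i$ an idempotent of $R$; since $R$ is indecomposable, $r_i\in\{0,1\}$, and the idempotent is primitive precisely when exactly one $r_i$ equals $1$. Hence the primitive idempotents of $Z^3_2(P,R)/Z^3_3(P,R)$ are exactly the cosets $e_{xxy}+e_{xyy}+Z^3_3(P,R)$ with $l(x,y)=1$, and likewise those of $Z^3_2(Q,R)/Z^3_3(Q,R)$ are the cosets $e_{uuv}+e_{uvv}+Z^3_3(Q,R)$ with $l(u,v)=1$.

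For the final step, since $\bar\Phi$ is an algebra isomorphism it carries primitive idempotents to primitive idempotents, so $\bar\Phi(e_{xxy}+e_{xyy}+Z^3_3(P,R))=e_{uuv}+e_{uvv}+Z^3_3(Q,R)$ for some (unique) $u<v$ with $l(u,v)=1$. Lifting this to $I^3(Q,R)$ and using $Z^3_3(Q,R)=J^3_2(Q,R)$ from \cref{Z^3_3(P_R)-basis}, the element $\sg_{xy}:=\Phi(e_{xxy}+e_{xyy})-(e_{uuv}+e_{uvv})$ lies in $J^3_2(Q,R)$, which yields the claimed identity $\Phi(e_{xxy}+e_{xyy})=e_{uuv}+e_{uvv}+\sg_{xy}$. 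The only genuine point requiring care is the identification of the primitive idempotents in the previous step: this is exactly where indecomposability of $R$ is essential, since otherwise an $r_i$ could be a nontrivial idempotent of $R$ and the listed cosets would fail to exhaust the primitive idempotents.
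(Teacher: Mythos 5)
Your proof is correct and follows essentially the same route as the paper: pass to the induced isomorphism $Z^3_2(P,R)/Z^3_3(P,R)\to Z^3_2(Q,R)/Z^3_3(Q,R)$, identify the cosets $e_{xxy}+e_{xyy}+Z^3_3(P,R)$ as the primitive idempotents via \cref{quotient-Z_2-over-Z_3} and the indecomposability of $R$, and lift back using $Z^3_3(Q,R)=J^3_2(Q,R)$ from \cref{Z^3_3(P_R)-basis}. The only difference is that you spell out the classification of (primitive) idempotents in $\bigoplus_i Rc_i$, a detail the paper leaves implicit, and your account of it is accurate.
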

	\begin{proof}
		Indeed, $\Phi$ induces an isomorphism $\wtl\Phi$ between the  $Z^3_2(P,R)/Z^3_3(P,R)$ and $Z^3_2(Q,R)/Z^3_3(Q,R)$. Since $R$ is indecomposable,  $\{e_{xxy}+e_{xyy}+Z^3_3(P,R)\}_{l(x,y)=1}$ is the set of all primitive idempotents of $Z^3_2(P,R)/Z^3_3(P,R)$ by \cref{quotient-Z_2-over-Z_3}, so the isomorphism $\wtl\Phi$ maps $e_{xxy}+e_{xyy}+Z^3_3(P,R)$ to $e_{uuv}+e_{uvv}+Z^3_3(Q,R)$ for some $u<v$ with $l(u,v)=1$. Rewriting this in terms of $\Phi$ and using \cref{Z^3_3(P_R)-basis}, we get the desired result.
	\end{proof}
	
	\begin{lem}\label{Phi-maps-e_xxy+e_xyy-to-e_vf(x)vf(x)vf(y)+e_vf(x)vf(y)vf(y)}
		Let $\Phi:I^3(P,R)\to I^3(Q,R)$ be an isomorphism and $\vf:P\to Q$ the corresponding bijection. Then for all $x<y$ with $l(x,y)=1$ we have $\vf(x)<\vf(y)$ and
		\begin{align}\label{Phi(e_xxy+e_xyy)=e_vf(x)vf(x)vf(y)+e_vf(x)vf(y)vf(y)+sg_xy}
			\Phi(e_{xxy}+e_{xyy})=e_{\vf(x)\vf(x)\vf(y)}+e_{\vf(x)\vf(y)\vf(y)}+\sg_{xy},
		\end{align} 
		where $\sg_{xy}\in J^3_2(Q,R)$.
	\end{lem}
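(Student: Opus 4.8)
The plan is to pin down the elements $u$ and $v$ furnished by \cref{Phi(e_xxy+e_xyy)} as $u=\vf(x)$ and $v=\vf(y)$, by transporting through $\Phi$ a family of multiplicative relations that single out $x$ (resp.\ $y$) as the \emph{unique} index whose diagonal idempotent fails to annihilate $e_{xxy}+e_{xyy}$ on the left (resp.\ on the right). First I would record, using \cref{product-e_x_1...x_n.e_y_1...y_n}, that for every $x'\in P$ one has $e_{x'}(e_{xxy}+e_{xyy})=\dl_{x'x}e_{xxy}$ and, symmetrically, $(e_{xxy}+e_{xyy})e_{y'}=\dl_{y'y}e_{xyy}$ for every $y'\in P$. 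Indeed, in $e_{x'}e_{xxy}$ the inner pairs match only when $x'=x$, while $e_{x'}e_{xyy}=0$ always, because the left inner pair of $e_{x'}$ is $(x',x')$ and the right inner pair of $e_{xyy}$ is $(x,y)$, and these never agree since $x<y$.

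Next I would apply $\Phi$ and pass to the quotient $I^3(Q,R)/J^3_2(Q,R)$, writing $\bar a$ for the class of $a$. By \cref{Phi-maps-e_x-to-e_vf(x)+rho_x} we may write $\Phi(e_{x'})=e_{\vf(x')}+\rho_{x'}$ with $\rho_{x'}\in J^3_1(Q,R)$, and by \cref{Phi(e_xxy+e_xyy)} we have $\Phi(e_{xxy}+e_{xyy})=e_{uuv}+e_{uvv}+\sg_{xy}$ with $\sg_{xy}\in J^3_2(Q,R)$. Splitting $\rho_{x'}=\rho_{x'}'+\rho_{x'}''$, where $\rho_{x'}''\in J^3_2(Q,R)$ and $\rho_{x'}'\in\spn_R\{e_{aac},e_{acc}\mid l(a,c)=1\}$, and using that $J^3_2(Q,R)$ is an ideal (so $\sg_{xy}$ and $\rho_{x'}''$ may be discarded), a direct computation with \cref{product-e_x_1...x_n.e_y_1...y_n} gives, modulo $J^3_2(Q,R)$,
\begin{align*}
\overline{\Phi(e_{x'})\,\Phi(e_{xxy}+e_{xyy})}=\dl_{\vf(x')u}\,\overline{e_{uuv}}+\af_{uv}^{(x')}\bigl(\overline{e_{uuv}}+\overline{e_{uvv}}\bigr),
\end{align*}
where $\af_{uv}^{(x')}$ is the coefficient of $e_{uuv}$ in $\rho_{x'}'$. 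The crucial feature is that the diagonal part $e_{\vf(x')}$ contributes \emph{only} the term $\overline{e_{uuv}}$, whereas the tail $\rho_{x'}'$ can contribute \emph{only} the symmetric combination $\overline{e_{uuv}}+\overline{e_{uvv}}$, all remaining products landing in $J^3_2(Q,R)$.

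Applying $\Phi$ to the left-multiplication relation of the first step, the left-hand side above equals $\dl_{x'x}\overline{\Phi(e_{xxy})}$, which vanishes for $x'\ne x$. Since $\overline{e_{uuv}}$ and $\overline{e_{uvv}}$ are $R$-linearly independent in $I^3(Q,R)/J^3_2(Q,R)$ (both have $l(u,v)=1<2$ and hence lie outside $J^3_2(Q,R)$), comparing coefficients yields $\af_{uv}^{(x')}=0$ and then $\dl_{\vf(x')u}=0$, i.e.\ $\vf(x')\ne u$ for every $x'\ne x$. As $\vf$ is a bijection and $u\in Q$, this forces $\vf(x)=u$. Running the same argument with the right-multiplication relations gives $\vf(y)=v$. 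Finally, since $u<v$ with $l(u,v)=1$ by \cref{Phi(e_xxy+e_xyy)}, we obtain $\vf(x)<\vf(y)$ with $l(\vf(x),\vf(y))=1$, and substituting $u=\vf(x)$, $v=\vf(y)$ into the formula of \cref{Phi(e_xxy+e_xyy)} yields exactly \cref{Phi(e_xxy+e_xyy)=e_vf(x)vf(x)vf(y)+e_vf(x)vf(y)vf(y)+sg_xy}.

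The main obstacle is the control of the tail $\rho_{x'}$ modulo $J^3_2(Q,R)$: a priori $\Phi(e_{x'})$ need not equal $e_{\vf(x')}$ on the nose, and in fact idempotents congruent to $e_{\vf(x')}$ modulo $J^3_1(Q,R)$ genuinely admit nonzero components of the form $e_{aac}$, so one cannot simply replace $\Phi(e_{x'})$ by its diagonal part. The displayed computation is what makes the argument work: however the tail behaves, its contribution is confined to the symmetric vector $\overline{e_{uuv}}+\overline{e_{uvv}}$, so that the asymmetry of the diagonal contribution $\dl_{\vf(x')u}\overline{e_{uuv}}$ is detected by the linear independence of $\overline{e_{uuv}}$ and $\overline{e_{uvv}}$. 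Carrying out the product bookkeeping correctly — in particular verifying that every product involving $e_{acc}$ with $(a,c)=(u,v)$ or with $c=u$ falls into $J^3_2(Q,R)$ — is the technical heart of the proof.
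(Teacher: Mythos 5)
Your proof is correct, but it closes the argument by a different mechanism than the paper. Both proofs start from the same data ($\Phi(e_{x'})=e_{\vf(x')}+\rho_{x'}$ from \cref{Phi-maps-e_x-to-e_vf(x)+rho_x}, $\Phi(e_{xxy}+e_{xyy})=e_{uuv}+e_{uvv}+\sg_{xy}$ from \cref{Phi(e_xxy+e_xyy)}) and from multiplying by diagonal idempotents; the difference is in which relations are used and how the tail is controlled. The paper uses only $x'=x$: from $e_{xxy}=e_x(e_{xxy}+e_{xyy})$ it gets $\Phi(e_{xxy})=(e_{\vf(x)}+\rho_x)(e_{uuv}+e_{uvv}+\sg_{xy})$, observes that $\rho_x(e_{uuv}+e_{uvv}+\sg_{xy})\in Z^3_1(Q,R)^2=Z^3_2(Q,R)$ and $e_{\vf(x)}\sg_{xy}\in J^3_2(Q,R)\sst Z^3_2(Q,R)$ by \cref{Z^3_2(P_R)-basis}, and concludes that $e_{\vf(x)}(e_{uuv}+e_{uvv})=\dl_{\vf(x)u}e_{uuv}$ cannot vanish, since otherwise $\Phi(e_{xxy})$ would lie in the $\Phi$-invariant submodule $Z^3_2(Q,R)$ while $e_{xxy}\notin Z^3_2(P,R)$ --- a positive detection of $u=\vf(x)$ from a single relation, with the whole tail absorbed wholesale into $Z^3_2$. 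You instead test against every $x'\ne x$, where $e_{x'}(e_{xxy}+e_{xyy})=0$, compute explicitly modulo $J^3_2(Q,R)$ that the tail $\rho_{x'}'$ can contribute only multiples of $e_{uuv}+e_{uvv}$, eliminate every candidate $\vf(x')\ne u$ by linear independence, and finish by bijectivity of $\vf$; note that your bookkeeping is exactly the concrete content of the paper's appeal to \cref{Z^3_2(P_R)-basis}, since $e_{uuv}+e_{uvv}$ spans the residue of $Z^3_2(Q,R)$ modulo $J^3_2(Q,R)$. So your route is self-contained (it bypasses the identity $Z^3_2(Q,R)=Z^3_1(Q,R)^2$ at the cost of redoing the product computations and quantifying over all $x'$), while the paper's is shorter because \cref{Z^3_2(P_R)-basis} packages the tail estimate once and for all. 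One small repair to your write-up: the fact that $\overline{e_{uuv}}$ and $\overline{e_{uvv}}$ each lie outside $J^3_2(Q,R)$ does not by itself give their linear independence in the quotient; the correct reason is that $J^3_2(Q,R)$ is spanned by a subset of the standard basis $\{e_{abc}\}$, so $I^3(Q,R)/J^3_2(Q,R)$ is a free $R$-module on the classes of the $e_{abc}$ with $l(a,c)\le 1$, of which $\overline{e_{uuv}}$ and $\overline{e_{uvv}}$ are two distinct members.
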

	\begin{proof}
		Using \cref{Phi(e_xxy+e_xyy),Phi-maps-e_x-to-e_vf(x)+rho_x}, write $\Phi(e_x)=e_{\vf(x)}+\rho_x$ and $\Phi(e_{xxy}+e_{xyy})=e_{uuv}+e_{uvv}+\sg_{xy}$, where $\rho_x\in J^3_1(Q,R)$ and $\sg_{xy}\in J^3_2(Q,R)$. Since $e_{xxy}=e_x(e_{xxy}+e_{xyy})$, then $\Phi(e_{xxy})=(e_{\vf(x)}+\rho_x)(e_{uuv}+e_{uvv}+\sg_{xy})$. Observe that $\rho_x(e_{uuv}+e_{uvv}+\sg_{xy})\in Z^3_2(Q,R)$ and $e_{\vf(x)}\sg_{xy}\in J^3_2(Q,R)\sst Z^3_2(Q,R)$ by \cref{Z^3_2(P_R)-basis}. Hence, $e_{\vf(x)}(e_{uuv}+e_{uvv})$ cannot be zero, since otherwise $\Phi(e_{xxy})\in Z^3_2(Q,R)$ contradicting \cref{Z^3_2(P_R)-basis}. Thus, $u=\vf(x)$. Similarly, it follows from $e_{xyy}=(e_{xxy}+e_{xyy})e_y$ that $v=\vf(y)$, proving \cref{Phi(e_xxy+e_xyy)=e_vf(x)vf(x)vf(y)+e_vf(x)vf(y)vf(y)+sg_xy}.
	\end{proof}

	\begin{thrm}\label{I^3(P)-cong-I^3(Q)=>P-cong-Q}
		Let $P,Q$ be finite posets and $R$ an indecomposable commutative unital ring. If $I^3(P,R)\cong I^3(Q,R)$, then $P\cong Q$.
	\end{thrm}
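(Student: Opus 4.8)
The plan is to fix an $R$-linear isomorphism $\Phi\colon I^3(P,R)\to I^3(Q,R)$ and to prove that the bijection $\vf\colon P\to Q$ it induces via \cref{Phi-maps-e_x-to-e_vf(x)+rho_x} is in fact an isomorphism of posets. Since $P$ and $Q$ are finite, their orders are the reflexive--transitive closures of their covering relations, so it suffices to show that $\vf$ restricts to a bijection between the covering pairs of $P$ and those of $Q$; here $y$ covers $x$ precisely when $x<y$ and $l(x,y)=1$. Essentially all of the analytic work has already been carried out in the preceding lemmas, so this final step is an assembly argument together with an appeal to finiteness.

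First I would record that $\vf$ preserves covers, which is exactly the content of \cref{Phi-maps-e_xxy+e_xyy-to-e_vf(x)vf(x)vf(y)+e_vf(x)vf(y)vf(y)}: whenever $x<y$ with $l(x,y)=1$, one gets $\vf(x)<\vf(y)$ and $l(\vf(x),\vf(y))=1$, i.e.\ $\vf(y)$ covers $\vf(x)$ in $Q$. To get the converse implication I would exploit symmetry by applying \cref{Phi-maps-e_x-to-e_vf(x)+rho_x,Phi-maps-e_xxy+e_xyy-to-e_vf(x)vf(x)vf(y)+e_vf(x)vf(y)vf(y)} to the inverse isomorphism $\Phi^{-1}\colon I^3(Q,R)\to I^3(P,R)$. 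This produces a cover-preserving bijection $Q\to P$, and because the induced maps on quotients satisfy $\wtl{\Phi^{-1}}=(\wtl\Phi)^{-1}$, that bijection is exactly $\vf^{-1}$. Hence $\vf^{-1}$ also preserves covers, so every covering pair of $Q$ is the $\vf$-image of a covering pair of $P$.

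Combining the two directions, $\vf$ induces a bijection between the covering relations of $P$ and of $Q$. Passing to reflexive--transitive closures then gives $x\le y\iff\vf(x)\le\vf(y)$ for all $x,y\in P$: a saturated chain of covers from $x$ to $y$ in $P$ maps under $\vf$ to one from $\vf(x)$ to $\vf(y)$ in $Q$, and conversely via $\vf^{-1}$. Therefore $\vf$ is a poset isomorphism and $P\cong Q$.

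I expect the main obstacle to lie not in this theorem itself — which is short given the groundwork — but in making the symmetry step watertight: one must verify that the bijection attached to $\Phi^{-1}$ genuinely inverts $\vf$, rather than being merely some other cover-preserving bijection, and this is precisely what the relation $\wtl{\Phi^{-1}}=(\wtl\Phi)^{-1}$ between the induced quotient isomorphisms guarantees. The only other point demanding care is the (standard but indispensable) use of finiteness of $P$ and $Q$ to reconstruct the full partial order from the covering relation alone.
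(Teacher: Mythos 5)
Your proposal is correct and takes essentially the same route as the paper's proof: both use \cref{Phi-maps-e_xxy+e_xyy-to-e_vf(x)vf(x)vf(y)+e_vf(x)vf(y)vf(y)} to see that $\vf$ sends covering pairs to covering pairs, recover order preservation by decomposing any relation $x<y$ into a saturated chain of covers (using finiteness), and obtain the converse by observing that the bijection induced by $\Phi\m$ is $\vf\m$. Your explicit verification via $\wtl{\Phi\m}=(\wtl\Phi)\m$ merely makes precise a point the paper states without comment.
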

	\begin{proof}
		Let $\Phi:I^3(P,R)\to I^3(Q,R)$ be an isomorphism. We will prove that the bijection $\vf:P\to Q$ from \cref{Phi-maps-e_x-to-e_vf(x)+rho_x} is order-preserving. Indeed, if $x<y$ and $l(x,y)=m\ge 1$, then there exist $x=x_0<\dots< x_m=y$ such that $l(x_i,x_{i+1})=1$ for all $0\le i\le m-1$. Applying consecutively \cref{Phi-maps-e_xxy+e_xyy-to-e_vf(x)vf(x)vf(y)+e_vf(x)vf(y)vf(y)} to $x_i<x_{i+1}$, $0\le i\le m-1$, we conclude that $\vf(x)=\vf(x_0)<\dots<\vf(x_m)=\vf(y)$. Since $\vf\m$ is the bijection induced by $\Phi\m$, it is also order-preserving. Thus, $\vf$ is a poset isomorphism between $P$ and $Q$.
	\end{proof}

\subsection{The description of isomorphisms. Base case}

The automorphisms of the incidence algebra of a finite poset over a field were first described in the article by Stanley~\cite{St} mentioned above. The result was later generalized by Baclawski~\cite{Baclawski72} to the case of locally finite posets. He proved that an automorphism of $I(P,R)$ is a composition of an inner automorphism, a multiplicative automorphism and the automorphism induced by an automorphism of $P$. This can be rewritten in terms of a semidirect product of groups as in~\cite{Drozd-Kolesnik07}. There are also generalizations of this description to quasi-ordered sets~\cite{Feinberg76,Scharlau75} and non-locally finite posets~\cite{Kh-aut}. Koppinen~\cite{Kopp} studied homomorphisms of incidence algebras $I(P,A)\to I(P,B)$ and obtained, under certain conditions, similar decompositions. 

Since $I^n(P,R)$ is not unital nor associative for $n\ge 3$, it does not make sense to talk about inner automorphisms of $I^n(P,R)$. One can introduce a generalization of a multiplicative automorphism of a classical incidence algebra to $I^n(P,R)$. However, at least in the case $n=3$, it is easy to show that such an automorphism will be the identity map. Only the automorphisms coming from automorphisms of $P$ non-trivially generalize to $I^3(P,R)$, and we will show that any automorphism of $I^3(P,R)$ is of this form. In fact, we will consider a more general situation of an isomorphism $I^3(P,R)\to I^3(Q,R)$ and prove that it is induced by an isomorphism of posets $P\to Q$.

\begin{prop}
	Let $\vf:P\to Q$ be an isomorphism of finite posets. Then it induces an isomorphism of $R$-algebras $\wh\vf:I^3(P,R)\to I^3(Q,R)$ given by
	\begin{align}
		\wh\vf(e_{xyz})=e_{\vf(x)\vf(y)\vf(z)}
	\end{align}
	for all $x\le y\le z$ in $P$.
\end{prop}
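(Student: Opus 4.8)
The claim is that a poset isomorphism $\vf:P\to Q$ induces an $R$-algebra isomorphism $\wh\vf$ between the partial flag incidence algebras via $\wh\vf(e_{xyz})=e_{\vf(x)\vf(y)\vf(z)}$. Since the $e_{xyz}$ form an $R$-basis of $I^3(P,R)$ (as $P$ is finite), I would first declare $\wh\vf$ to be the unique $R$-linear map sending each basis element $e_{xyz}$ to $e_{\vf(x)\vf(y)\vf(z)}$. This is well-defined because $\vf$ is order-preserving, so $x\le y\le z$ in $P$ forces $\vf(x)\le\vf(y)\le\vf(z)$ in $Q$, meaning $(\vf(x),\vf(y),\vf(z))\in Q^3_\le$ and $e_{\vf(x)\vf(y)\vf(z)}$ is a genuine basis element of $I^3(Q,R)$.

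The bijectivity of $\wh\vf$ is immediate: since $\vf$ is a poset isomorphism, it restricts to a bijection $P^3_\le\to Q^3_\le$, so $\wh\vf$ sends a basis of $I^3(P,R)$ bijectively onto a basis of $I^3(Q,R)$ and is therefore an $R$-module isomorphism. The real content is multiplicativity, i.e.\ $\wh\vf(fg)=\wh\vf(f)\wh\vf(g)$ for all $f,g$. By $R$-bilinearity of both sides it suffices to verify this on basis elements, so I would reduce to checking $\wh\vf(e_{\mathbf{x}}e_{\mathbf{y}})=\wh\vf(e_{\mathbf{x}})\wh\vf(e_{\mathbf{y}})$ for $\mathbf{x}=(x_1,\mathbf{u})$ and $\mathbf{y}=(\mathbf{v},y_3)$ in $P^3_\le$.

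The main (and only nontrivial) step is this multiplicativity check, and the tool is \cref{product-e_x_1...x_n.e_y_1...y_n}, which gives the product of two basis elements explicitly. When $\mathbf{u}\ne\mathbf{v}$ the product is $0$; and since $\vf$ is injective, the images $\vf(\mathbf{u})\ne\vf(\mathbf{v})$ as well, so the right-hand side also vanishes. When $\mathbf{u}=\mathbf{v}$, the proposition gives $e_{\mathbf{x}}e_{\mathbf{y}}=\sum_{\mathbf{z}\in\cI(\mathbf{u})}e_{(x_1,\mathbf{z},y_3)}$, and the crux is that $\vf$ maps the index set $\cI(\mathbf{u})$ bijectively onto $\cI(\vf(\mathbf{u}))$: since $\cI(\mathbf{u})=\lf u_1,u_2\rf$ (for $n=3$, $\mathbf{u}=(u_1,u_2)$) and $\vf$ carries the interval $\lf u_1,u_2\rf$ bijectively onto $\lf\vf(u_1),\vf(u_2)\rf$ as $\vf$ is an order-isomorphism. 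Applying $\wh\vf$ termwise then turns the sum for $e_{\mathbf{x}}e_{\mathbf{y}}$ into precisely the sum expressing $e_{\vf(\mathbf{x})}e_{\vf(\mathbf{y})}=\wh\vf(e_{\mathbf{x}})\wh\vf(e_{\mathbf{y}})$, again by \cref{product-e_x_1...x_n.e_y_1...y_n} applied in $Q$. I expect this reindexing of the convolution sum via the interval bijection to be the heart of the argument; everything else is bookkeeping. (One could alternatively verify multiplicativity at the level of functions using \cref{prod-in-I^n} and the change of variables $\mathbf{w}\mapsto\vf^{-1}(\mathbf{w})$, but working with the explicit basis product is cleaner.)
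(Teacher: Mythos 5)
Your proposal is correct and fills in exactly the routine verification that the paper dismisses with the one-word proof ``Obvious'': define $\wh\vf$ on the basis, note that $\vf$ induces a bijection $P^3_\le\to Q^3_\le$ (giving the module isomorphism), and check multiplicativity on basis elements via \cref{product-e_x_1...x_n.e_y_1...y_n}, using that a poset isomorphism carries the interval $\lf u_1,u_2\rf$ bijectively onto $\lf\vf(u_1),\vf(u_2)\rf$. This matches the intended argument; there is no gap.
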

\begin{proof}
	Obvious.
\end{proof}

Given an isomorphism of $R$-algebras $\Phi:I^3(P,R)\to I^3(Q,R)$ and the corresponding isomorphism of posets $\vf:P\to Q$ from \cref{I^3(P)-cong-I^3(Q)=>P-cong-Q}, define $\Psi=(\wh\vf)\m\circ\Phi$. Then $\Psi$ is an automorphism of $I^3(P,R)$ inducing the identity automorphism of $P$. We are going to prove that $\Psi=\id_{I^3(P,R)}$.

	\begin{lem}\label{Phi-maps-e_xxy-to-e_vf(x)vf(x)vf(y)}
		For all $x<y$ with $l(x,y)=1$ there exist $\mu_{xy},\nu_{xy}\in J^3_2(P,R)$ such that
		\begin{align}\label{Phi(e_xxy)-and-Phi(e_xyy)}
		\Psi(e_{xxy})=e_{xxy}+\mu_{xy}\text{ and } \Psi(e_{xyy})=e_{xyy}+\nu_{xy}.
		\end{align} 
	\end{lem}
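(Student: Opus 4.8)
The plan is to reduce the whole statement to a short computation in the quotient $\bar A:=I^3(P,R)/J^3_2(P,R)$, exploiting the square-zero relations that $e_{xxy}$ and $e_{xyy}$ satisfy there. First I would collect what is already available for $\Psi$. Since $\Psi$ induces the identity bijection of $P$, \cref{Phi-maps-e_x-to-e_vf(x)+rho_x} gives $\Psi(e_x)=e_x+\rho_x$ with $\rho_x\in J^3_1(P,R)$, and \cref{Phi-maps-e_xxy+e_xyy-to-e_vf(x)vf(x)vf(y)+e_vf(x)vf(y)vf(y)} gives $\Psi(e_{xxy}+e_{xyy})=e_{xxy}+e_{xyy}+\sg_{xy}$ with $\sg_{xy}\in J^3_2(P,R)$. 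Write $p=e_{xxy}$, $q=e_{xyy}$, $s=p+q$. By \cref{product-e_x_1...x_n.e_y_1...y_n} one has $p=e_x s$ and $q=s e_y$, so $\Psi(p)=\Psi(e_x)\Psi(s)$. Because $\mu_{xy}+\nu_{xy}$ equals $\sg_{xy}\in J^3_2(P,R)$, it is enough to prove the single statement $\Psi(p)-p\in J^3_2(P,R)$; the assertion for $q$ then follows by subtraction.

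Next I would pass to $\bar A$. As $J^3_2(P,R)=Z^3_3(P,R)$ is preserved by every isomorphism (\cref{Z^3_3(P_R)-basis} together with the observation that isomorphisms respect the submodules $Z^3_i$), $\Psi$ descends to an automorphism $\bar\Psi$ of $\bar A$, a free $R$-module on the images of the $e_{\mathbf{x}}$ with $l(x_1,x_3)\le 1$. Using \cref{product-e_x_1...x_n.e_y_1...y_n} and $l(x,y)=1$ (so $\lf x,y\rf=\{x,y\}$), one checks in $\bar A$ the relations $p^2=q^2=0$, $qp=0$, $pq=s$, while $\bar\Psi(s)=s$. The goal is now the identity $\bar\Psi(p)=p$ in $\bar A$.

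The computational heart is to evaluate $\bar\Psi(p)=\bar\Psi(e_x)\,s$. Writing $\bar\Psi(e_x)=e_x+\bar\rho_x$, where $\bar\rho_x$ is the length-one part of $\rho_x$, I would use that the multiplication of $\bar A$ is very sparse: a product of two length-one basis vectors vanishes except for the matching product $e_{aac}e_{acc}=e_{aac}+e_{acc}$ over a covering pair $a\lessdot c$, and $e_x e_{xxy}=e_{xxy}$, $e_x e_{xyy}=0$. This collapses $\bar\rho_x\,p=0$ and $\bar\rho_x\,q=t\,s$, where $t$ is the coefficient of $e_{xxy}$ in $\rho_x$; hence $\bar\Psi(p)=\bar\Psi(e_x)\,s=(1+t)p+t q$, and likewise $\bar\Psi(q)=\bar\Psi(s)-\bar\Psi(p)=-t\,p+(1-t)q$. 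Everything now lives in the rank-two submodule spanned by $p$ and $q$.

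Finally I would pin down $t$ by pushing the square-zero relations through $\bar\Psi$. Applying $\bar\Psi$ to $p^2=0$ yields $\bar\Psi(p)^2=(t+t^2)s=0$, hence $t+t^2=0$; applying it to $pq=s$ yields $\bar\Psi(p)\bar\Psi(q)=(1-t^2)s=s$, hence $t^2=0$. Since $s=p+q$ is a free generator of $\bar A$, coefficients can be compared, and together these force $t=0$, i.e. $\bar\Psi(p)=p$. Translating back, $\mu_{xy}=\Psi(e_{xxy})-e_{xxy}\in J^3_2(P,R)$, whence $\nu_{xy}=\sg_{xy}-\mu_{xy}\in J^3_2(P,R)$ as well. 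I expect this last step to be the main obstacle: the idempotency and orthogonality of the lifted idempotents $\bar\Psi(e_x)$ by themselves do not determine $t$ (they only annihilate the complementary coefficients), and it is precisely the square-zero structure $p^2=q^2=0$, $pq=s$ combined with the multiplicativity of $\bar\Psi$ that forces $t=0$ over an arbitrary, possibly non-reduced, indecomposable ring.
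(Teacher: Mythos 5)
Your proof is correct, and its first half is in fact the paper's argument in disguise: both decompose $e_{xxy}=e_x(e_{xxy}+e_{xyy})$, apply $\Psi$ using \cref{Phi-maps-e_x-to-e_vf(x)+rho_x,Phi-maps-e_xxy+e_xyy-to-e_vf(x)vf(x)vf(y)+e_vf(x)vf(y)vf(y)}, and arrive at $\Psi(e_{xxy})\equiv(1+t)e_{xxy}+te_{xyy}$ modulo $J^3_2(P,R)$ with $t=\rho_x(x,x,y)$ (your $t$ is the paper's $r$). Where you genuinely diverge is in killing $t$: the paper pushes the single orthogonality relation $e_{xxy}e_y=0$ through $\Psi$ and reads off $re_{xyy}\in J^3_2(P,R)$, hence $r=0$ in one step, whereas you stay inside the span of $p=e_{xxy}$, $q=e_{xyy}$ and exploit the intrinsic relations $p^2=0$ and $pq=p+q$ in the quotient, obtaining $t+t^2=0$ and $t^2=0$ and hence $t=0$. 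Both endings work over an arbitrary commutative ring (note that $t+t^2=0$ alone would only make $-t$ idempotent, so your second relation is genuinely needed; the paper avoids this bifurcation by using $e_y$). I verified your multiplication table in $\bar A=I^3(P,R)/J^3_2(P,R)$ against \cref{product-e_x_1...x_n.e_y_1...y_n}: the only nonvanishing product of length-one basis vectors is indeed $e_{aac}e_{acc}=e_{aac}+e_{acc}$ for $l(a,c)=1$, which gives $\bar\rho_xp=0$ and $\bar\rho_xq=ts$ as you claim, and the passage to $\bar A$ is licit since $J^3_2(P,R)=Z^3_3(P,R)$ by \cref{Z^3_3(P_R)-basis} is preserved by any isomorphism. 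What your packaging buys is cleaner bookkeeping -- the paper must carry explicit error terms $\mu_{xy},\xi_{xy}\in J^3_2(P,R)$ through the computation, while you absorb them into the quotient once and for all -- and your closing observation is accurate: idempotency of $\bar\Psi(e_x)$ only yields $t\cdot\rho_x(x,y,y)=0$, so the square-zero structure (or, in the paper's version, orthogonality against $e_y$) really is the indispensable input. What the paper's route buys in exchange is brevity: one relation instead of two, and no need to describe the quotient algebra at all.
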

	\begin{proof}
		Write $\Psi(e_x)=e_{x}+\rho_x$, where $\rho_x\in J^3_1(P,R)$. In view of \cref{Phi-maps-e_xxy+e_xyy-to-e_vf(x)vf(x)vf(y)+e_vf(x)vf(y)vf(y)} we have 
		\begin{align}
			\Psi(e_{xxy})&=\Psi(e_x(e_{xxy}+e_{xyy}))=(e_{x}+\rho_x)(e_{xxy}+e_{xyy}+\sg_{xy})\notag\\
			&=e_{xxy}+r(e_{xxy}+e_{xyy})+\mu_{xy}=(r+1)e_{xxy}+re_{xyy}+\mu_{xy},\label{Phi(e_xxy)=(r+1)e_vf(x)vf(x)vf(y)+re_vf(x)vf(y)vf(y)}
		\end{align}
		where $r=\rho_x(x,x,y)$ and
		\begin{align*}
			\mu_{xy}=\sum_{u<x}\left(\rho_x(u,x,x)e_{uxy}+\rho_x(u,x,y)(e_{uxy}+e_{uyy})\right)+(e_{x}+\rho_x)\sg_{xy},
		\end{align*}
		which belongs to $J^3_2(P,R)$.
		Since, moreover, $0=e_{xxy}e_y$, we obtain by \cref{Phi(e_xxy)=(r+1)e_vf(x)vf(x)vf(y)+re_vf(x)vf(y)vf(y)}
		\begin{align*}
		0=\Psi(e_{xxy})(e_{y}+\rho_y)=((r+1)e_{xxy}+re_{xyy}+\mu_{xy})(e_{y}+\rho_y)=re_{xyy}+\xi_{xy},
		\end{align*}
		where $\xi_{xy}$ is an element of $J^3_2(P,R)$ given by
		\begin{align*}
		\xi_{xy}=\sum_{y<v}\left((r+1)\rho_y(x,y,v)(e_{xxv}+e_{xyv})+r\rho_y(y,y,v)e_{xyv})\right)+\mu_{xy}(e_{y}+\rho_y).
		\end{align*}
		Consequently, $re_{xyy}\in J^3_2(P,R)$, so $r=0$, whence the first equality of \cref{Phi(e_xxy)-and-Phi(e_xyy)}. Similarly, the second equality of \cref{Phi(e_xxy)-and-Phi(e_xyy)} follows from $e_{xyy}=(e_{xxy}+e_{xyy})e_y$ and $0=e_xe_{xyy}$.
	\end{proof}
	
	\begin{cor}\label{e_vf(x)rho_x=0}
		It follows from the proof of \cref{Phi-maps-e_xxy-to-e_vf(x)vf(x)vf(y)} that $\Psi(e_x)(x,x,y)=\Psi(e_y)(x,y,y)=0$ for all $x<y$ with $l(x,y)=1$.
	\end{cor}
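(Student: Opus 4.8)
The statement is a re-reading of the proof of \cref{Phi-maps-e_xxy-to-e_vf(x)vf(x)vf(y)}, so the plan is to isolate the two scalars that that proof annihilates and to identify them with the two coordinate values in the claim. I keep the normalizations $\Psi(e_x)=e_x+\rho_x$ and $\Psi(e_y)=e_y+\rho_y$ with $\rho_x,\rho_y\in J^3_1(P,R)$ from the lemma, and I write $r=\rho_x(x,x,y)$ and $s=\rho_y(x,y,y)$.

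First I would record the trivial identities $e_{xxx}(x,x,y)=0$ and $e_{yyy}(x,y,y)=0$, which hold because $x<y$ forces $(x,x,y)\ne(x,x,x)$ and $(x,y,y)\ne(y,y,y)$. Evaluating the two normalizations at these triples then gives $\Psi(e_x)(x,x,y)=\rho_x(x,x,y)=r$ and $\Psi(e_y)(x,y,y)=\rho_y(x,y,y)=s$, so the corollary reduces to $r=0$ and $s=0$. Both of these are exactly what the proof of the lemma produces: the derivation of the first equality in \cref{Phi(e_xxy)-and-Phi(e_xyy)} starts from $\Psi(e_{xxy})=(r+1)e_{xxy}+re_{xyy}+\mu_{xy}$ with $\mu_{xy}\in J^3_2(P,R)$ and uses $0=e_{xxy}e_y$ to force the component of $\Psi(e_{xxy})\Psi(e_y)$ outside $J^3_2(P,R)$ to vanish, which gives $r=0$; symmetrically, the second equality comes from $\Psi(e_{xyy})=se_{xxy}+(s+1)e_{xyy}+\nu_{xy}$ with $\nu_{xy}\in J^3_2(P,R)$ together with $0=e_xe_{xyy}$, forcing $s=0$. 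Substituting $r=s=0$ back into the identities above finishes the proof.

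The only point needing care is the bookkeeping modulo $J^3_2(P,R)$: since $l(x,y)=1$, precisely $e_{xxy}$ and $e_{xyy}$ survive in $I^3(P,R)/J^3_2(P,R)$, so that setting the non-$J^3_2(P,R)$ part of a product to zero genuinely pins down $r$ and $s$; one must check that the cross-terms generated by $\rho_x,\rho_y$ (and by $\sigma_{xy}$) either fall into $J^3_2(P,R)$ or cancel, which is the computation already carried out for the lemma. I expect no real obstacle beyond this, because the corollary adds nothing to the lemma except the remark that the scalars $r$ and $s$ killed there are literally the coordinate values $\Psi(e_x)(x,x,y)$ and $\Psi(e_y)(x,y,y)$.
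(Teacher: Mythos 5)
Your proof is correct and follows essentially the same route as the paper: the paper's proof likewise just notes that $\Psi(e_x)(x,x,y)=\rho_x(x,x,y)$ and $\Psi(e_y)(x,y,y)=\rho_y(x,y,y)$, which are exactly the scalars $r$ and $s$ shown to vanish in the proof of \cref{Phi-maps-e_xxy-to-e_vf(x)vf(x)vf(y)}. Your explicit write-out of the symmetric computation $\Psi(e_{xyy})=se_{xxy}+(s+1)e_{xyy}+\nu_{xy}$ forcing $s=0$ merely unpacks the ``similarly'' step of that lemma, so there is no substantive difference.
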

	\begin{proof}
		For, $\Psi(e_x)(x,x,y)=\rho_x(x,x,y)$ and $\Psi(e_y)(x,y,y)=\rho_y(x,y,y)$.
	\end{proof}

	\begin{lem}\label{Phi-maps-e_x-to-e_vf(x)}
		For all $x\in P$ there exists $\rho_x\in J^3_2(P,R)$ such that
		\begin{align}\label{Psi(e_x)=e_vf(x)}
		\Psi(e_x)=e_{x}+\rho_x.
		\end{align} 
	\end{lem}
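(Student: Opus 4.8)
The plan is to upgrade the membership $\rho_x\in J^3_1(P,R)$, already recorded in the proof of \cref{Phi-maps-e_xxy-to-e_vf(x)vf(x)vf(y)}, to $\rho_x\in J^3_2(P,R)$. Since $\rho_x\in J^3_1(P,R)$ already forces $\rho_x(a,a,a)=0$, it suffices to prove that $\rho_x$ vanishes on every triple of length one, i.e. that $\rho_x(a,a,c)=0$ and $\rho_x(a,c,c)=0$ whenever $a<c$ with $l(a,c)=1$. The case $x\in\{a,c\}$ is immediate from \cref{e_vf(x)rho_x=0}, which gives $\rho_a(a,a,c)=0$ and $\rho_c(a,c,c)=0$; so the real content is the case $x\notin\{a,c\}$.

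The tool I would use is the orthogonality of the idempotents. For $u\ne v$ we have $e_ue_v=0$, hence $\Psi(e_u)\Psi(e_v)=0$, which upon writing $\Psi(e_u)=e_u+\rho_u$ and expanding becomes
\begin{align*}
e_u\rho_v+\rho_ue_v+\rho_u\rho_v=0.
\end{align*}
The key preliminary computation is that $e_uf$ is supported on triples $(u,u,p_3)$ with value $f(u,u,p_3)$, and $fe_u$ on triples $(p_1,u,u)$ with value $f(p_1,u,u)$. To treat $(a,a,c)$ I would take $u=a$, $v=x$ (with $x\ne a$) and evaluate the displayed identity at $(a,a,c)$: the first term contributes exactly $\rho_x(a,a,c)$, the term $\rho_ae_x$ vanishes because its support would force the last two coordinates to equal $x$, and since $l(a,c)=1$ the interval $\lf a,c\rf=\{a,c\}$ reduces the convolution $\rho_a\rho_x$ at $(a,a,c)$ to the single term $\rho_a(a,a,c)\rho_x(a,c,c)$, which is $0$ by \cref{e_vf(x)rho_x=0}. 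Hence $\rho_x(a,a,c)=0$. The triple $(a,c,c)$ is handled symmetrically, now taking $u=x$, $v=c$ (with $x\ne c$) and evaluating at $(a,c,c)$, the surviving pieces being $\rho_x(a,c,c)$ from $\rho_xe_c$ and a convolution term carrying the factor $\rho_c(a,c,c)=0$.

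The step I expect to be the main obstacle is isolating $\rho_x(a,a,c)$ as a term with unit coefficient while guaranteeing that everything else at that triple vanishes. The naive idea of using only idempotency $\Psi(e_x)^2=\Psi(e_x)$ fails: it yields merely $\rho_x(a,a,c)=\rho_x(a,a,c)\rho_x(a,c,c)$, so indecomposability of $R$ forces $\rho_x(a,a,c)\in\{0,1\}$ but cannot exclude the value $1$. The decisive point is to pair $\rho_x$ instead with the specific idempotent $e_a$ (resp.\ $e_c$) whose diagonal matches the first two (resp.\ last two) coordinates of the triple: this makes $\rho_x(a,a,c)$ appear linearly through $e_a\rho_x$, while the only potentially obstructing cross term $\rho_a\rho_x$ is annihilated by the already established vanishing $\rho_a(a,a,c)=0$. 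The whole argument then relies on the covering hypothesis $l(a,c)=1$, which restricts every relevant convolution to the two indices of $\lf a,c\rf$.
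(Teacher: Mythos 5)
Your proposal is correct and takes essentially the same route as the paper's proof: the paper likewise expands the orthogonality relations $\Psi(e_u)\Psi(e_x)=0$ and $\Psi(e_x)\Psi(e_v)=0$, evaluates at the covering triples $(u,u,v)$ and $(u,v,v)$ with $l(u,v)=1$, and kills the quadratic convolution term using \cref{e_vf(x)rho_x=0} (together with $\rho(a,a,a)=0$ from $J^3_1$, which you leave implicit when calling the convolution a ``single term''). One small remark: your opening claim that the case $x\in\{a,c\}$ is \emph{immediate} from \cref{e_vf(x)rho_x=0} overstates what that corollary gives (it yields $\rho_a(a,a,c)=0$ and $\rho_c(a,c,c)=0$, but not $\rho_c(a,a,c)=0$ or $\rho_a(a,c,c)=0$), yet this is harmless because your displayed argument only requires $x\ne a$ (resp.\ $x\ne c$) and thus covers those two leftover values --- exactly the hypotheses $x\ne u<v$ and $u<v\ne x$ under which the paper runs the same computation.
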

	\begin{proof}
		We know by \cref{Phi-maps-e_x-to-e_vf(x)+rho_x} that $\Psi(e_x)=e_{x}+\rho_x$ for some $\rho_x\in J^3_1(P,R)$. It thus remains to prove that $\rho_x\in J^3_2(P,R)$. By \cref{e_vf(x)rho_x=0} we have $\rho_x(x,x,y)=\Psi(e_x)(x,x,y)=0$ for all $x<y$ with $l(x,y)=1$ and $\rho_x(z,x,x)=\Psi(e_x)(z,x,x)=0$ for all $z<x$ with $l(z,x)=1$. Take $x\ne u<v$ such that $l(u,v)=1$. Since $e_u$ and $e_x$ are orthogonal idempotents, we obtain
		\begin{align*}
			0=\Psi(e_u)\Psi(e_x)=(e_u+\rho_u)(e_x+\rho_x)=e_u\rho_x+\rho_ue_x+\rho_u\rho_x.
		\end{align*}
		Evaluating the values of both sides at $(u,u,v)$ and using $\rho_u(u,u,v)=0$ proved above we have
		\begin{align*}
			0=\rho_x(u,u,v)+\rho_u(u,u,v)\rho_x(u,v,v)=\rho_x(u,u,v).
		\end{align*}
		Similarly it follows from $\Psi(e_x)\Psi(e_v)=0$ evaluated at $(u,v,v)$ that $\rho_x(u,v,v)=0$ for all $u<v\ne x$ with $l(u,v)=1$. This completes the proof of \cref{Psi(e_x)=e_vf(x)}.
	\end{proof}
	
	\begin{prop}\label{Psi(e_xyz)-e_xyz-in-Z_3}
		For all $x\le y\le z$ with $l(x,z)\le 1$ we have $\Psi(e_{xyz})-e_{xyz}\in J^3_2(P,R)$.
	\end{prop}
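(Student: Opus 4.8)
The plan is to reduce the statement to a short case analysis, since the hypothesis $l(x,z)\le 1$ forces $e_{xyz}$ to be one of the three ``low-level'' basis elements already handled in \cref{Phi-maps-e_x-to-e_vf(x),Phi-maps-e_xxy-to-e_vf(x)vf(x)vf(y)}. I would first dispose of the possibility $l(x,z)=0$: in this case the interval $\lf x,z\rf$ is a single point, so $x=y=z$ and $e_{xyz}=e_x$. The claim then reads $\Psi(e_x)-e_x\in J^3_2(P,R)$, which is exactly the content of \cref{Phi-maps-e_x-to-e_vf(x)}.

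For the remaining case $l(x,z)=1$, the key combinatorial observation is that a length-$1$ interval has no element strictly between its endpoints: were there $w$ with $x<w<z$, the chain $x<w<z$ would have length $2$ inside $\lf x,z\rf$, contradicting $l(x,z)=1$. Hence $\lf x,z\rf=\{x,z\}$, and the middle coordinate $y$ of the admissible triple $(x,y,z)$ must be either $x$ or $z$. This leaves the two possibilities $e_{xyz}=e_{xxz}$ (when $y=x$) and $e_{xyz}=e_{xzz}$ (when $y=z$), and in both of them $l(x,z)=1$ is precisely the hypothesis of \cref{Phi-maps-e_xxy-to-e_vf(x)vf(x)vf(y)}.

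To conclude, I would invoke the two equalities of \cref{Phi(e_xxy)-and-Phi(e_xyy)} (reading $z$ for the $y$ there): the first yields $\Psi(e_{xxz})=e_{xxz}+\mu_{xz}$ and the second $\Psi(e_{xzz})=e_{xzz}+\nu_{xz}$, with $\mu_{xz},\nu_{xz}\in J^3_2(P,R)$. In every case $\Psi(e_{xyz})-e_{xyz}\in J^3_2(P,R)$, as required. I do not anticipate a real obstacle: the proposition simply consolidates the preceding lemmas, and the only point needing care is the enumeration of the triples $(x,y,z)$ compatible with $l(x,z)\le 1$, for which the length argument above is the crux.
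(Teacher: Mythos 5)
Your proof is correct and follows the paper's approach exactly: the paper dispatches this proposition in one line as a consequence of \cref{Phi-maps-e_xxy-to-e_vf(x)vf(x)vf(y),Phi-maps-e_x-to-e_vf(x)}, and your case analysis ($x=y=z$ when $l(x,z)=0$; $y\in\{x,z\}$ when $l(x,z)=1$, since a length-$1$ interval has no strictly intermediate element) merely spells out the routine enumeration the paper leaves implicit.
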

	\begin{proof}
		A consequence of \cref{Phi-maps-e_xxy-to-e_vf(x)vf(x)vf(y),Phi-maps-e_x-to-e_vf(x)}.
	\end{proof}

\subsection{The description of isomorphisms. Inductive step}

	We are going to generalize \cref{Psi(e_xyz)-e_xyz-in-Z_3} to the case of an arbitrary $l(x,z)$ replacing $J^3_2(P,R)$ by $J^3_i(P,R)$ for an appropriate $i\ge 2$. 
	
	We proceed by induction on $l(x,z)$. Fix $k\ge 1$ and assume that 
	\begin{align*}
		\Psi(e_{xyz})-e_{xyz}\in J^3_{k+1}(P,R)
	\end{align*}
	for all $x\le y\le z$ with $l(x,z)\le k$. We keep the notations $\Psi(e_x)=e_x+\rho_x$, $\Psi(e_{xxy})=e_{xxy}+\mu_{xy}$ and $\Psi(e_{xyy})=e_{xyy}+\nu_{xy}$, where $l(x,y)\le k$. We also write $\Psi(e_{xyz})=e_{xyz}+\eta_{xyz}$ for $x<y<z$ with $l(x,z)\le k$.
	
	The following result will be frequently used without any reference.
	\begin{lem}\label{fg(xyz)-for-l(xz)=j}
		Let $f,g\in J^3_i(P,R)$ and $x\le y\le z$ with $l(x,z)=i$. Then 
		\begin{align}\label{fg(xyz)-one-summand}
			(fg)(x,y,z)=f(x,x,z)g(x,z,z).
		\end{align}
	\end{lem}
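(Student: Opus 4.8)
The plan is to expand $(fg)(x,y,z)$ straight from the definition of the product and then use the length hypothesis $l(x,z)=i$ to annihilate every summand but one. Specializing \cref{prod-in-I^n} to $n=3$ and $\mathbf{x}=(x,y,z)$, so that $\cI(x,y,z)=\lf x,y\rf\times\lf y,z\rf$, I would write
\[
(fg)(x,y,z)=\sum_{(s,t)\in\lf x,y\rf\times\lf y,z\rf} f(x,s,t)\,g(s,t,z).
\]

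First I would pin down which summands can be non-zero. A term survives only if both $f(x,s,t)\ne 0$ and $g(s,t,z)\ne 0$; since $f,g\in J^3_i(P,R)$, this forces $l(x,t)\ge i$ and $l(s,z)\ge i$. On the other hand, from $x\le s$ and $t\le z$ one has the interval inclusions $\lf s,z\rf\sst\lf x,z\rf$ and $\lf x,t\rf\sst\lf x,z\rf$, so monotonicity of the length function gives $l(s,z)\le l(x,z)=i$ and $l(x,t)\le l(x,z)=i$. Combining the two pairs of estimates, both become equalities: $l(x,t)=l(s,z)=i$.

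The key step is to upgrade these equalities to $s=x$ and $t=z$, using that a strict inequality costs a unit of length. If $x<s$, then prepending $x$ to a maximal chain in $\lf s,z\rf$ yields a chain in $\lf x,z\rf$ of length $l(s,z)+1=i+1$, contradicting $l(x,z)=i$; hence $s=x$. Symmetrically, $t<z$ would give $l(x,z)\ge l(x,t)+1=i+1$, so $t=z$. Thus the unique index that can contribute is $(s,t)=(x,z)$, which indeed lies in $\lf x,y\rf\times\lf y,z\rf$ because $x\le y\le z$, and the sum collapses to $f(x,x,z)\,g(x,z,z)$, as claimed.

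I do not expect a genuine obstacle here: the argument is entirely elementary once the product is written out. The only point requiring a little care is the length bookkeeping in the last paragraph---namely, the two monotonicity estimates for $l$ together with the observation that $x<s$ (resp.\ $t<z$) strictly increases the maximal chain length---but both are immediate from the definition of $l(\cdot,\cdot)$ as the maximum chain length over an interval.
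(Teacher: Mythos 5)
Your proof is correct and takes essentially the same route as the paper's: expand $(fg)(x,y,z)$ via \cref{prod-in-I^n} and use $f,g\in J^3_i(P,R)$ to annihilate every summand with $x<s$ or $t<z$, since either strict inequality forces $l(s,z)<l(x,z)=i$ or $l(x,t)<l(x,z)=i$. The paper asserts this strict length drop directly where you justify it by the explicit chain-prepending argument, a difference only in level of detail.
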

	\begin{proof}
		By \cref{prod-in-I^n} we have $(fg)(x,y,z)=\sum f(x,u,v)g(u,v,z)$, where the sum is over all $x\le u\le y\le v\le z$. Observe however that $f(x,u,v)=0$ for $v<z$, because $l(x,v)<l(x,z)=i$ in this case. Similarly, $g(u,v,z)=0$ for $x<u$. Thus, the only summand that can be non-zero corresponds to $u=x$ and $v=z$ giving \cref{fg(xyz)-one-summand}.
	\end{proof}

	\begin{lem}\label{fe_xxy-and-e_xyyf-in-I_j+1}
		Let $f\in J^3_i(P,R)$ and $x\le y\le z$. If $x<y$ then $e_{xyz}f\in J^3_{i+1}(P,R)$, and if $y<z$ then $fe_{xyz}\in J^3_{i+1}(P,R)$.
	\end{lem}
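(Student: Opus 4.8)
The plan is to evaluate each product directly from the definition \cref{prod-in-I^n} and then extract a length bound; the strictness hypotheses $x<y$ and $y<z$ are what convert a bound of $\ge i$ into a bound of $\ge i+1$.

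For the left-multiplication claim (under $x<y$), I would write $(e_{xyz}f)(p_1,p_2,p_3)$ as a sum over $p_1\le u\le p_2\le v\le p_3$ of $e_{xyz}(p_1,u,v)f(u,v,p_3)$. The factor $e_{xyz}(p_1,u,v)$ is non-zero only when $(p_1,u,v)=(x,y,z)$, so the sum collapses to the single term $f(y,z,p_3)$, which survives only at triples $(x,p_2,p_3)$ with $y\le p_2\le z\le p_3$. Next I would show that any such non-zero value forces $l(p_1,p_3)\ge i+1$: since $f\in J^3_i(P,R)$ and $f(y,z,p_3)\ne 0$, we have $l(y,p_3)\ge i$; and because $x<y\le p_3$, prepending $x$ to a maximal chain of $\lf y,p_3\rf$ raises the length by one, giving $l(p_1,p_3)=l(x,p_3)\ge l(y,p_3)+1\ge i+1$. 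Hence $(e_{xyz}f)(p_1,p_2,p_3)=0$ whenever $l(p_1,p_3)<i+1$, i.e.\ $e_{xyz}f\in J^3_{i+1}(P,R)$.

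The right-multiplication claim (under $y<z$) I would prove by the mirror computation: $e_{xyz}(u,v,p_3)$ is non-zero only for $(u,v,p_3)=(x,y,z)$, so $(fe_{xyz})(p_1,p_2,p_3)$ reduces to $f(p_1,x,y)$ at triples $(p_1,p_2,z)$ with $p_1\le x\le p_2\le y$. Then $f(p_1,x,y)\ne 0$ gives $l(p_1,y)\ge i$, and appending $z$ to a maximal chain of $\lf p_1,y\rf$ (using $y<z$) yields $l(p_1,p_3)=l(p_1,z)\ge l(p_1,y)+1\ge i+1$, so $fe_{xyz}\in J^3_{i+1}(P,R)$.

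I expect no genuine obstacle beyond the bookkeeping of the two symmetric cases; the single substantive point is the length inequality $l(x,p_3)\ge l(y,p_3)+1$ (respectively $l(p_1,z)\ge l(p_1,y)+1$), where it is precisely the strictness $x<y$ (resp.\ $y<z$) that supplies the extra $+1$ and promotes the bound from $i$ to $i+1$. This is exactly why the hypotheses read $x<y$ and $y<z$ rather than $x\le y$ and $y\le z$.
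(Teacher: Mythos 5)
Your proof is correct and takes essentially the same route as the paper: evaluate the product from the definition, observe it collapses to the single term $f(y,z,p_3)$ (resp.\ $f(p_1,x,y)$), and use the strictness $x<y$ (resp.\ $y<z$) to gain the extra unit of length. The only cosmetic difference is that the paper first invokes the ideal property of $J^3_i(P,R)$ to reduce to checking triples with $l(u,w)=i$ exactly, whereas you prove the bound $l(p_1,p_3)\ge i+1$ directly; the substantive computation is identical.
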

	\begin{proof}
		Since $J^3_i(P,R)$ is an ideal, then $fe_{xyz},e_{xyz}f\in J^3_i(P,R)$. Let $x<y$. Taking $u\le v\le w$ with $l(u,w)=i$ we see that $(e_{xyz}f)(u,v,w)$ can be different from zero only if $x=u<y\le v\le z\le w$, in which case it equals $f(y,z,w)$. But since $l(y,w)<l(x,w)=l(u,w)=i$ and $f\in J^3_i(P,R)$, then $f(y,z,w)=0$. Thus, $(e_{xyz}f)(u,v,w)=0$. Similarly, $(fe_{xyz})(u,v,w)=0$ whenever $y<z$.
	\end{proof}

	\begin{lem}\label{rho=e_xrho+rhoe_x-modulo-I_k+2}
		For any $x\in P$ and $u\le v\le w$ with $l(u,w)=k+1$ one has $\rho_x(u,v,w)=0$ unless $u=v=x$ or $v=w=x$.
	\end{lem}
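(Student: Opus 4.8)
The plan is to use that $\{\Psi(e_y)\}_{y\in P}$ is again a family of pairwise orthogonal idempotents, exploiting only the identities $\Psi(e_x)^2=\Psi(e_x)$ and $\Psi(e_x)\Psi(e_y)=0$ for $x\ne y$. Writing $\Psi(e_x)=e_x+\rho_x$, these read
\begin{align*}
\rho_x=e_x\rho_x+\rho_xe_x+\rho_x\rho_x,\qquad 0=e_x\rho_y+\rho_xe_y+\rho_x\rho_y\quad(x\ne y).
\end{align*}
The first thing to note is that the inductive hypothesis, applied to $e_{xxx}=e_x$ (for which $l(x,x)=0\le k$), already gives $\rho_x\in J^3_{k+1}(P,R)$ for every $x\in P$. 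Hence at a point $(u,v,w)$ with $l(u,w)=k+1$ each quadratic term is computed by \cref{fg(xyz)-for-l(xz)=j}: $(\rho_x\rho_y)(u,v,w)=\rho_x(u,u,w)\rho_y(u,w,w)$, independently of $v$. A direct evaluation of the convolution moreover shows that $(e_x\rho_y)(u,v,w)$ equals $\rho_y(u,u,w)$ when $u=v=x$ and vanishes otherwise, and that $(\rho_xe_y)(u,v,w)$ equals $\rho_x(u,w,w)$ when $v=w=y$ and vanishes otherwise. Thus, fixing a pair $u<w$ with $l(u,w)=k+1$, the entire level-$(k+1)$ behaviour is carried by the two families $\alpha_x:=\rho_x(u,u,w)$ and $\beta_x:=\rho_x(u,w,w)$, $x\in P$.

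I would then read off the relations at the distinguished points $(u,u,w)$ and $(u,w,w)$. Idempotency at $(u,u,w)$ gives $\alpha_x=\alpha_x\beta_x$ for $x\ne u$ and $\alpha_u\beta_u=0$; idempotency at $(u,w,w)$ gives $\beta_x=\alpha_x\beta_x$ for $x\ne w$ and $\alpha_w\beta_w=0$. In particular $\beta_u=\alpha_u\beta_u=0$ and $\alpha_w=\alpha_w\beta_w=0$. Orthogonality $\Psi(e_x)\Psi(e_y)=0$ (for $x\ne y$) evaluated at $(u,u,w)$ gives $\alpha_x\beta_y=0$ whenever $x\ne u$, and evaluated at $(u,w,w)$ gives $\beta_x+\alpha_x\beta_w=0$ when $y=w$ (so $x\ne w$). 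For $x\notin\{u,w\}$ the first relation with $y=w$ reads $\alpha_x\beta_w=0$, and combined with $\beta_x+\alpha_x\beta_w=0$ it forces $\beta_x=0$; feeding this into the orthogonality of $e_u$ and $e_x$ at $(u,u,w)$, which reads $\alpha_x+\alpha_u\beta_x=0$, forces $\alpha_x=0$. Summarising, $\alpha_x=\beta_x=0$ for $x\notin\{u,w\}$, while $\beta_u=0$ and $\alpha_w=0$.

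Finally I would assemble the statement pointwise. Let $(u,v,w)$ satisfy $l(u,w)=k+1$ with neither $u=v=x$ nor $v=w=x$. Then $(e_x\rho_x)(u,v,w)=(\rho_xe_x)(u,v,w)=0$, so the idempotency identity collapses to $\rho_x(u,v,w)=\alpha_x\beta_x$; and in each of the cases $x\notin\{u,w\}$ (where $\alpha_x=\beta_x=0$), $x=u$ (where $\beta_u=0$) and $x=w$ (where $\alpha_w=0$) the product $\alpha_x\beta_x$ vanishes, so $\rho_x(u,v,w)=0$, as claimed.

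The step I expect to be the main obstacle is exactly the elimination of the surviving quadratic contribution $\alpha_x\beta_x$. Because products in $I^3(P,R)$ only satisfy $J^3_iJ^3_j\sst J^3_{\max(i,j)}$ rather than $J^3_{i+j}$, the term $\rho_x\rho_x$ does not drop below level $k+1$, so no naive filtration or degree count is available; one is forced to play idempotency against orthogonality precisely at the two points $(u,u,w)$ and $(u,w,w)$, which is the only place where \cref{fg(xyz)-for-l(xz)=j} turns the otherwise unwieldy convolutions into single products and makes the bookkeeping close up.
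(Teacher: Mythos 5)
Your proof is correct and follows essentially the same route as the paper's: both arguments evaluate the idempotency identity $\rho_x=e_x\rho_x+\rho_xe_x+\rho_x^2$ and the orthogonality identities $e_x\rho_y+\rho_xe_y+\rho_x\rho_y=0$ precisely at the corner points $(u,u,w)$ and $(u,w,w)$, where \cref{fg(xyz)-for-l(xz)=j} collapses the quadratic terms to the single products $\alpha_x\beta_y$. Your $\alpha_x,\beta_x$ bookkeeping and the additional use of orthogonality against $e_w$ (the paper instead gets $\alpha_x=0$ directly from $\Psi(e_u)\Psi(e_x)=0$ evaluated at both corners, never needing $\beta_x=0$) are only cosmetic variations on the paper's case analysis.
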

	\begin{proof}
		Since $\Psi(e_x)=e_x+\rho_x$ is an idempotent, we have
		\begin{align}\label{rho_x=e_xrho_x+rho_xe_x+rho_x^2}
			\rho_x=e_x\rho_x+\rho_xe_x+\rho_x^2.
		\end{align}
		Let $x<u\le v$ such that $l(x,v)=k+1$. Evaluating \cref{rho_x=e_xrho_x+rho_xe_x+rho_x^2} at $(x,x,v)$ and applying \cref{fg(xyz)-for-l(xz)=j} to $\rho_x\in J^3_{k+1}(P,R)$, we obtain $\rho_x(x,x,v)\rho_x(x,v,v)=0$. Now, calculating \cref{rho_x=e_xrho_x+rho_xe_x+rho_x^2} at $(x,u,v)$, we get $\rho_x(x,u,v)=\rho_x(x,x,v)\rho_x(x,v,v)$, whence 
		\begin{align}\label{rho_x(xuv)=0}
			\rho_x(x,u,v)=0.
		\end{align} 
		Similarly, taking $u\le v<x$ with $l(u,x)=k+1$ and calculating \cref{rho_x=e_xrho_x+rho_xe_x+rho_x^2} at $(u,x,x)$ and $(u,v,x)$, we obtain
		\begin{align}\label{rho_x(uvx)=0}
			\rho_x(u,v,x)=0.
		\end{align}
		
		Let now $u\le v\le w$ such that $x\not\in\{u,w\}$ and $l(u,w)=k+1$. Since $\Psi(e_u)\Psi(e_x)=0$, we have
		\begin{align}\label{e_urho_x+rho_ue_x+rho_urho_x=0}
			e_u\rho_x+\rho_ue_x+\rho_u\rho_x=0.
		\end{align}
		Evaluating \cref{e_urho_x+rho_ue_x+rho_urho_x=0} at $(u,u,w)$ we get $\rho_x(u,u,w)+\rho_u(u,u,w)\rho_x(u,w,w)=0$. Now, evaluating the same equality at $(u,w,w)$ we get $\rho_u(u,u,w)\rho_x(u,w,w)=0$, so $\rho_x(u,u,w)=0$. But $\rho_x(u,v,w)=\rho_x(u,u,w)\rho_x(u,w,w)$ by \cref{rho_x=e_xrho_x+rho_xe_x+rho_x^2}. Consequently,
		\begin{align}\label{rho_x(uvw)=0}
			\rho_x(u,v,w)=0.
		\end{align}
		
		Combining \cref{rho_x(xuv)=0,rho_x(uvx)=0,rho_x(uvw)=0}, we come to the desired result.
	\end{proof}
	
	\begin{lem}\label{eta_xyz(xxw)-and-eta_xyz(uzz)}
		Let $x<y<z$ with $l(x,z)\le k$. For any $x<w$ with $l(x,w)=k+1$ one has $\eta_{xyz}(x,x,w)=0$, and for any $u<z$ with $l(u,z)=k+1$ one has $\eta_{xyz}(u,z,z)=0$.
	\end{lem}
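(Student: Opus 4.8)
The plan is to prove the two assertions separately; they are symmetric to one another, so I will explain the first, $\eta_{xyz}(x,x,w)=0$, in full and then indicate the parallel argument for $\eta_{xyz}(u,z,z)=0$. Throughout I fix $x<y<z$ with $l(x,z)\le k$ and $x<w$ with $l(x,w)=k+1$; note that $l(x,w)=k+1>l(x,z)$ forces $w\ne z$. By the inductive hypothesis (applied to $(x,x,x)$ for $\rho_x$, and using $l(x,y),l(y,z)\le k-1$ for $\nu_{xy},\mu_{yz}$) all of $\eta_{xyz},\rho_x,\mu_{yz},\nu_{xy}$ lie in $J^3_{k+1}(P,R)$, so \cref{fg(xyz)-for-l(xz)=j} is available at every triple of length $k+1$.

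The first ingredient comes from the identity $e_xe_{xyz}=0$, which is immediate from \cref{product-e_x_1...x_n.e_y_1...y_n} since the relevant inner indices $(x,x)$ and $(x,y)$ differ. Applying $\Psi$ and writing $\Psi(e_x)=e_x+\rho_x$, $\Psi(e_{xyz})=e_{xyz}+\eta_{xyz}$ gives $e_x\eta_{xyz}+\rho_xe_{xyz}+\rho_x\eta_{xyz}=0$. Evaluating at $(x,x,w)$, the convolution collapses $e_x\eta_{xyz}$ to $\eta_{xyz}(x,x,w)$; the middle term vanishes because $e_{xyz}(x,\,\cdot\,,w)=0$ once $w\ne z$; and the last term is handled by \cref{fg(xyz)-for-l(xz)=j}. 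This yields the key relation
\begin{align*}
\eta_{xyz}(x,x,w)=-\rho_x(x,x,w)\,\eta_{xyz}(x,w,w).
\end{align*}
Here lies the main obstacle: neither factor on the right is visibly zero. Indeed, by \cref{rho=e_xrho+rhoe_x-modulo-I_k+2} the value $\rho_x(x,x,w)$ is exactly of the surviving type $u=v=x$, so it need not vanish, and $(x,w,w)$ is not one of the triples the lemma is about, so $\eta_{xyz}(x,w,w)$ must be computed separately.

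To resolve this I would manufacture two companion identities and cancel the cross term using commutativity of $R$. First, factoring $e_{xyz}=e_{xyy}e_{yyz}$ and applying $\Psi$ gives $\eta_{xyz}=e_{xyy}\mu_{yz}+\nu_{xy}e_{yyz}+\nu_{xy}\mu_{yz}$; evaluating at $(x,w,w)$, the first two summands drop out because $w\ne y$ kills the surviving convolution terms, and \cref{fg(xyz)-for-l(xz)=j} leaves $\eta_{xyz}(x,w,w)=\nu_{xy}(x,x,w)\mu_{yz}(x,w,w)$. Second, the identity $e_xe_{yyz}=0$ (again the inner indices $(x,x)$ and $(y,y)$ differ) yields, after applying $\Psi$, the relation $e_x\mu_{yz}+\rho_xe_{yyz}+\rho_x\mu_{yz}=0$, which evaluated at $(x,w,w)$ collapses to the scalar identity $\rho_x(x,x,w)\mu_{yz}(x,w,w)=0$. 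Substituting the formula for $\eta_{xyz}(x,w,w)$ into the key relation and reordering the scalars then gives $\rho_x(x,x,w)\eta_{xyz}(x,w,w)=\nu_{xy}(x,x,w)\bigl(\rho_x(x,x,w)\mu_{yz}(x,w,w)\bigr)=0$, whence $\eta_{xyz}(x,x,w)=0$.

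The second assertion, $\eta_{xyz}(u,z,z)=0$ for $u<z$ with $l(u,z)=k+1$ (so that $u\notin\{x,y\}$, as lengths force $u\ne x,y$), is proved by the mirror argument: start from $e_{xyz}e_z=0$ to obtain $\eta_{xyz}(u,z,z)=-\eta_{xyz}(u,u,z)\rho_z(u,z,z)$, use the same factorization of $\eta_{xyz}$ evaluated at $(u,u,z)$ to get $\eta_{xyz}(u,u,z)=\nu_{xy}(u,u,z)\mu_{yz}(u,z,z)$, and use $e_{xyy}e_z=0$ to obtain the auxiliary vanishing $\nu_{xy}(u,u,z)\rho_z(u,z,z)=0$; combining these exactly as before finishes the proof. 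In both halves the difficulty is identical: the direct relation (from $e_xe_{xyz}=0$, resp. $e_{xyz}e_z=0$) carries an unwanted cross term, and the crux is to produce the precise companion product identity (here $e_xe_{yyz}=0$, resp. $e_{xyy}e_z=0$) that makes the cross term factor through a provably zero scalar.
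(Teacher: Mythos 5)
Your proof is correct, and its opening move coincides with the paper's: both apply $\Psi$ to $e_xe_{xyz}=0$ and evaluate the resulting identity $e_x\eta_{xyz}+\rho_xe_{xyz}+\rho_x\eta_{xyz}=0$ at $(x,x,w)$ to obtain $\eta_{xyz}(x,x,w)+\rho_x(x,x,w)\eta_{xyz}(x,w,w)=0$. Where you diverge is in killing the cross term. The paper evaluates the \emph{same} identity a second time, at $(x,w,w)$: there $(e_x\eta_{xyz})(x,w,w)=0$ outright (the convolution forces $e_x(x,v,w)$ with $w>x$), $(\rho_xe_{xyz})(x,w,w)=0$ by \cref{fe_xxy-and-e_xyyf-in-I_j+1}, and \cref{fg(xyz)-for-l(xz)=j} turns the remaining term into exactly $\rho_x(x,x,w)\eta_{xyz}(x,w,w)$, which is therefore zero --- no further identities needed. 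So your remark that ``neither factor on the right is visibly zero'' overlooks that the product itself equals $(\rho_x\eta_{xyz})(x,w,w)$ and is already computable from the identity in hand. Your detour --- deriving $\eta_{xyz}=e_{xyy}\mu_{yz}+\nu_{xy}e_{yyz}+\nu_{xy}\mu_{yz}$ from $e_{xyz}=e_{xyy}e_{yyz}$, evaluating to get $\eta_{xyz}(x,w,w)=\nu_{xy}(x,x,w)\mu_{yz}(x,w,w)$, and extracting the auxiliary vanishing $\rho_x(x,x,w)\mu_{yz}(x,w,w)=0$ from $e_xe_{yyz}=0$ --- is nevertheless valid: it uses only the induction hypothesis together with \cref{fg(xyz)-for-l(xz)=j,fe_xxy-and-e_xyyf-in-I_j+1}, and it creates no circularity with the later \cref{mu_xy=e_xmu_xy-and-nu_xy=nu_xye_y-modulo-I_k+2}, which you correctly avoid invoking. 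I also checked your mirror half: the relation $\eta_{xyz}(u,z,z)=-\eta_{xyz}(u,u,z)\rho_z(u,z,z)$ and the formula $\eta_{xyz}(u,u,z)=\nu_{xy}(u,u,z)\mu_{yz}(u,z,z)$ are right, and the auxiliary vanishing $\nu_{xy}(u,u,z)\rho_z(u,z,z)=0$ does follow from $e_{xyy}e_z=0$, though you should say explicitly that this identity must be evaluated at $(u,u,z)$ (evaluating it at $(u,z,z)$, the naive mirror of your first half, gives instead $\nu_{xy}(u,z,z)+\nu_{xy}(u,u,z)\rho_z(u,z,z)=0$, which is not what you want). In sum: what your route buys is an explicit scalar factorization of the obstruction through the factors of $e_{xyz}$; what it costs is two extra product identities and bookkeeping that the paper's two-point evaluation of a single identity renders unnecessary.
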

	\begin{proof}
		Applying $\Psi$ to $e_xe_{xyz}=0$, we have
		\begin{align}\label{e_xeta_xyz+rho_xe_xyz+rho_xeta_xyz=0}
		e_x\eta_{xyz}+\rho_xe_{xyz}+\rho_x\eta_{xyz}=0.
		\end{align}
		Now, evaluating \cref{e_xeta_xyz+rho_xe_xyz+rho_xeta_xyz=0} at $(x,x,w)$ and using \cref{fg(xyz)-for-l(xz)=j,fe_xxy-and-e_xyyf-in-I_j+1} we obtain $\eta_{xyz}(x,x,w)+\rho_x(x,x,w)\eta_{xyz}(x,w,w)=0$. Furthermore, calculating \cref{e_xeta_xyz+rho_xe_xyz+rho_xeta_xyz=0} at $(x,w,w)$ we get $\rho_x(x,x,w)\eta_{xyz}(x,w,w)=0$. Thus, $\eta_{xyz}(x,x,w)=0$.
		
		Similarly, $e_{xyz}e_z=0$ implies $\eta_{xyz}(u,z,z)=0$.
	\end{proof}

	\begin{lem}\label{mu_xy=e_xmu_xy-and-nu_xy=nu_xye_y-modulo-I_k+2}
		For any $x<y$ with $l(x,y)\le k$ one has $\mu_{xy},\nu_{xy}\in J^3_{k+2}(P,R)$.
	\end{lem}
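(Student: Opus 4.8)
The plan is to leverage the inductive hypothesis, which gives $\rho_x,\mu_{xy},\nu_{xy}\in J^3_{k+1}(P,R)$ (recall $e_x=e_{xxx}$ has $l(x,x)=0\le k$, and $l(x,y)\le k$). Consequently $\mu_{xy}$ and $\nu_{xy}$ already vanish on all triples $(u,v,w)$ with $l(u,w)\le k$, so it suffices to prove that they vanish on every triple with $l(u,w)=k+1$. I would do this in two stages: first determine the support of $\mu_{xy}$ and $\nu_{xy}$ among such triples, and then eliminate the few surviving values.

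For the support stage, apply $\Psi$ to $e_xe_{xxy}=e_{xxy}$ (\cref{product-e_x_1...x_n.e_y_1...y_n}), which, after cancelling $e_xe_{xxy}=e_{xxy}$, gives
\[
\mu_{xy}=e_x\mu_{xy}+\rho_xe_{xxy}+\rho_x\mu_{xy}.
\]
Evaluating at $(u,v,w)$ with $l(u,w)=k+1$, I would analyse the three terms. The first contributes $\mu_{xy}(x,x,w)$ exactly when $u=v=x$. The middle term $\rho_xe_{xxy}$ is nonzero only at triples forced to have $w=y$, where it equals $\rho_x(u,x,x)$; since $l(u,x)<l(u,y)=k+1$, it vanishes by $\rho_x\in J^3_{k+1}(P,R)$. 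The last term reduces to $\rho_x(u,u,w)\mu_{xy}(u,w,w)$ by \cref{fg(xyz)-for-l(xz)=j}, and \cref{rho=e_xrho+rhoe_x-modulo-I_k+2} forces $\rho_x(u,u,w)=0$ unless $u=x$. Reading off the equation at $(x,x,w)$ yields $\rho_x(x,x,w)\mu_{xy}(x,w,w)=0$, which then gives $\mu_{xy}(u,v,w)=0$ for every $(u,v,w)\ne(x,x,w)$. Thus at length $k+1$ the support of $\mu_{xy}$ lies in the triples $(x,x,w)$. The mirror argument starting from $e_{xyy}e_y=e_{xyy}$ shows that $\nu_{xy}$ is supported, at length $k+1$, on the triples $(u,y,y)$.

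For the elimination stage, apply $\Psi$ to $e_{xxy}e_{xyy}=e_{xxy}+e_{xyy}+\sum_{x<z<y}e_{xzy}$ (\cref{product-e_x_1...x_n.e_y_1...y_n}) to obtain
\[
e_{xxy}\nu_{xy}+\mu_{xy}e_{xyy}+\mu_{xy}\nu_{xy}=\mu_{xy}+\nu_{xy}+\sum_{x<z<y}\eta_{xzy}.
\]
I would evaluate this at $(x,x,w)$ with $l(x,w)=k+1$. On the left, every term vanishes: $e_{xxy}\nu_{xy}$ and $\mu_{xy}e_{xyy}$ only pick up values of $\nu_{xy}$ and $\mu_{xy}$ at triples excluded by the support description (or forcing $w=y$, impossible since $l(x,y)<k+1$), while $\mu_{xy}\nu_{xy}$ reduces by \cref{fg(xyz)-for-l(xz)=j} to $\mu_{xy}(x,x,w)\nu_{xy}(x,w,w)$ with $\nu_{xy}(x,w,w)=0$. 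On the right, $\nu_{xy}(x,x,w)=0$ by the support description and $\eta_{xzy}(x,x,w)=0$ by \cref{eta_xyz(xxw)-and-eta_xyz(uzz)}, leaving only $\mu_{xy}(x,x,w)$. Hence $\mu_{xy}(x,x,w)=0$, and together with the support description this gives $\mu_{xy}\in J^3_{k+2}(P,R)$. The symmetric evaluation at $(u,y,y)$ gives $\nu_{xy}(u,y,y)=0$ and hence $\nu_{xy}\in J^3_{k+2}(P,R)$.

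The routine part of all this is the bookkeeping of the convolution products at triples of length $k+1$. The point that requires the most care, and which I expect to be the main obstacle, is verifying that each \emph{boundary} product ($\rho_xe_{xxy}$, $e_{xxy}\nu_{xy}$, $\mu_{xy}e_{xyy}$, and their mirror images) genuinely vanishes; in every case this comes down to the observation that one factor is forced to be evaluated on an interval of length strictly less than $k+1$, so that membership in $J^3_{k+1}(P,R)$ kills it. Assembling these vanishings correctly, in concert with the two structural lemmas \cref{rho=e_xrho+rhoe_x-modulo-I_k+2,eta_xyz(xxw)-and-eta_xyz(uzz)}, is where the argument must be checked most carefully.
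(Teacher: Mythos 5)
Your proposal is correct and follows essentially the same route as the paper: the same identities $e_xe_{xxy}=e_{xxy}$ and $e_{xxy}e_{xyy}=\sum_{x\le z\le y}e_{xzy}$ pushed through $\Psi$, the same appeals to \cref{fg(xyz)-for-l(xz)=j}, \cref{rho=e_xrho+rhoe_x-modulo-I_k+2} and \cref{eta_xyz(xxw)-and-eta_xyz(uzz)}, with your ``support'' and ``elimination'' stages matching the paper's Cases 1--2 and 3--4 respectively. The only cosmetic difference is that you verify the vanishing of $\rho_xe_{xxy}$ by direct convolution bookkeeping where the paper cites \cref{fe_xxy-and-e_xyyf-in-I_j+1}.
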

	\begin{proof}
		By induction hypothesis $\mu_{xy},\nu_{xy}\in J^3_{k+1}(P,R)$. Now take $u\le v\le w$ with $l(u,w)=k+1$. 
		
		\textit{Case 1.} $(u,v)\ne (x,x)$. We are going to prove that $\mu_{xy}(u,v,w)=0$. Applying $\Psi$ to $e_xe_{xxy}=e_{xxy}$, we have
		\begin{align}\label{mu_xy=e_xmu_xy+rho_xe_xxy+rho_xmu_xy}
		\mu_{xy}=e_x\mu_{xy}+\rho_xe_{xxy}+\rho_x\mu_{xy}.
		\end{align}
		Notice that $(\rho_xe_{xxy})(u,v,w)$ is always zero by \cref{fe_xxy-and-e_xyyf-in-I_j+1}, and $(e_x\mu_{xy})(u,v,w)=0$ since $(u,v)\ne (x,x)$. Then evaluating \cref{mu_xy=e_xmu_xy+rho_xe_xxy+rho_xmu_xy} at $(u,v,w)$ and using \cref{fg(xyz)-for-l(xz)=j} we obtain $\mu_{xy}(u,v,w)=\rho_x(u,u,w)\mu_{xy}(u,w,w)$. If $u\ne x$, then $\rho_x(u,u,w)=0$ thanks to \cref{rho=e_xrho+rhoe_x-modulo-I_k+2}, so $\mu_{xy}(u,v,w)=0$. If $u=x$ and $v\ne x$, then $\mu_{xy}(u,v,w)=\rho_x(x,x,w)\mu_{xy}(x,w,w)$. But in this case we calculate \cref{mu_xy=e_xmu_xy+rho_xe_xxy+rho_xmu_xy} at $(x,x,w)$ to see that $\rho_x(x,x,w)\mu_{xy}(x,w,w)=0$. Thus, again $\mu_{xy}(u,v,w)=0$.
		
		\textit{Case 2.} $(v,w)\ne (y,y)$. One similarly gets $\nu_{xy}(u,v,w)=0$ from $e_{xyy}e_y=e_{xyy}$.
		
		\textit{Case 3.} $u=v=x$. Let us now prove that $\mu_{xy}(u,v,w)=0$ in this case. To this end, we apply $\Psi$ to $e_{xxy}e_{xyy}=\sum_{x\le z\le y}e_{xzy}$. We have
		\begin{align}\label{e_xxynu_xy+mu_xye_xyy+mu_xynu_xy=mu_xy+nu_xy+sum.eta_xzy}
			e_{xxy}\nu_{xy}+\mu_{xy}e_{xyy}+\mu_{xy}\nu_{xy}=\mu_{xy}+\nu_{xy}+\sum_{x<z<y}\eta_{xzy}.
		\end{align}
		Notice that $y\ne w$ because $l(x,y)\le k$ while $l(u,w)=k+1$. The element $(e_{xxy}\nu_{xy})(x,x,w)$ can be non-zero only if $y\le w$, in which case it equals $\nu_{xy}(x,y,w)$. But this is zero as proved in Case 2. Now, $(\mu_{xy}e_{xyy})(x,x,w)\ne 0$ implies $w=y$ which is impossible. Thus, the value of the left-hand side of \cref{e_xxynu_xy+mu_xye_xyy+mu_xynu_xy=mu_xy+nu_xy+sum.eta_xzy} at $(x,x,w)$ equals $\mu_{xy}(x,x,w)\nu_{xy}(x,w,w)$. The latter is again zero due to the fact that $y\ne w$, so that $\nu_{xy}(x,w,w)=0$ by the result of Case 2. Now, calculating the right-hand side of \cref{e_xxynu_xy+mu_xye_xyy+mu_xynu_xy=mu_xy+nu_xy+sum.eta_xzy} at $(x,x,w)$, we get $\mu_{xy}(x,x,w)$ since $\eta_{xzy}(x,x,w)=0$ by \cref{eta_xyz(xxw)-and-eta_xyz(uzz)} and $\nu_{xy}(x,x,w)=0$ by Case 2.
		
		\textit{Case 4.} $v=w=y$. As in Case 3, evaluating \cref{e_xxynu_xy+mu_xye_xyy+mu_xynu_xy=mu_xy+nu_xy+sum.eta_xzy} at $(u,y,y)$ one proves that $\nu_{xy}(u,y,y)=0$.
	\end{proof}

	\begin{lem}\label{Psi(e_xyz)-e_xyz-in-J^3_k+2}
		For any $x\le y\le z$ with $0<l(x,z)\le k+1$ one has $\Psi(e_{xyz})-e_{xyz}\in J^3_{k+2}(P,R)$.
	\end{lem}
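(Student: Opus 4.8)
The plan is to exploit the multiplicative decompositions of the basis elements $e_{xyz}$ furnished by \cref{product-e_x_1...x_n.e_y_1...y_n} and reduce everything to \cref{mu_xy=e_xmu_xy-and-nu_xy=nu_xye_y-modulo-I_k+2}, together with the fact that $J^3_{k+2}(P,R)$ is an ideal. I would organise the argument by the shape of the triple $x\le y\le z$, treating first the strict case $x<y<z$ and then the two degenerate shapes $x=y<z$ and $x<y=z$.

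For $x<y<z$ the product formula yields the factorization $e_{xyz}=e_{xyy}e_{yyz}$, so that
\begin{align*}
\Psi(e_{xyz})-e_{xyz}=\nu_{xy}e_{yyz}+e_{xyy}\mu_{yz}+\nu_{xy}\mu_{yz}.
\end{align*}
Since $x<y<z$ forces $l(x,y)+l(y,z)\le l(x,z)\le k+1$ with both summands at least $1$, we have $l(x,y),l(y,z)\le k$, and \cref{mu_xy=e_xmu_xy-and-nu_xy=nu_xye_y-modulo-I_k+2} gives $\nu_{xy},\mu_{yz}\in J^3_{k+2}(P,R)$. As $J^3_{k+2}(P,R)$ is an ideal, all three summands lie in it, which settles this shape for every admissible length $l(x,z)\le k+1$.

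For the degenerate shapes the elements $e_{xxz}$ and $e_{xzz}$ with $l(x,z)\le k$ are already covered by \cref{mu_xy=e_xmu_xy-and-nu_xy=nu_xye_y-modulo-I_k+2}, so only $l(x,z)=k+1$ needs attention. As $k\ge 1$ we may pick $u$ with $x<u<z$; the product formula gives $e_{xxu}e_{xuz}=e_{xxz}+\sum_{x<v\le u}e_{xvz}$, whence
\begin{align*}
e_{xxz}=e_{xxu}e_{xuz}-\sum_{x<v\le u}e_{xvz}.
\end{align*}
Now $e_{xxu}$ has $l(x,u)\le k$, so $\Psi(e_{xxu})-e_{xxu}\in J^3_{k+2}(P,R)$ by \cref{mu_xy=e_xmu_xy-and-nu_xy=nu_xye_y-modulo-I_k+2}, while $e_{xuz}$ and every $e_{xvz}$ with $x<v\le u<z$ are of the strict shape, hence satisfy $\Psi(e_{\bullet})-e_{\bullet}\in J^3_{k+2}(P,R)$ by the first part. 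Applying $\Psi$ and expanding, the leading terms of $\Psi(e_{xxu})\Psi(e_{xuz})$ and of $\sum_{x<v\le u}\Psi(e_{xvz})$ contribute $e_{xxz}+\sum_{x<v\le u}e_{xvz}$ and $\sum_{x<v\le u}e_{xvz}$ respectively; after subtraction the sums cancel and leave $e_{xxz}$, while every remaining summand is a product with at least one factor in the ideal $J^3_{k+2}(P,R)$. Hence $\Psi(e_{xxz})-e_{xxz}\in J^3_{k+2}(P,R)$, and $e_{xzz}$ is handled symmetrically via $e_{xuz}e_{uzz}=e_{xzz}+\sum_{u\le v<z}e_{xvz}$.

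The one point that needs care is the logical order: the degenerate case at length $k+1$ draws on the strict case at length $k+1$ (through the factor $e_{xuz}$), so I would prove the strict case first and note that it invokes \cref{mu_xy=e_xmu_xy-and-nu_xy=nu_xye_y-modulo-I_k+2} only at lengths $\le k$ and never the degenerate case at length $k+1$, ruling out circularity. I expect the main obstacle to be exactly this bookkeeping --- checking that each auxiliary factor in the decompositions has length at most $k$ so that \cref{mu_xy=e_xmu_xy-and-nu_xy=nu_xye_y-modulo-I_k+2} applies, and that a strictly intermediate point $u$ exists once $l(x,z)=k+1$. Granting these, the remainder is a routine cancellation.
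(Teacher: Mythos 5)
Your proof is correct and takes essentially the same route as the paper's: Case~1 via the factorization $e_{xyz}=e_{xyy}e_{yyz}$ combined with \cref{mu_xy=e_xmu_xy-and-nu_xy=nu_xye_y-modulo-I_k+2} and the ideal property of $J^3_{k+2}(P,R)$, and the degenerate shapes via $e_{xxu}e_{xuz}=e_{xxz}+\sum_{x<v\le u}e_{xvz}$ and its mirror $e_{xuz}e_{uzz}=e_{xzz}+\sum_{u\le v<z}e_{xvz}$, with the strict case at length $k+1$ feeding into the cancellation. Your explicit bookkeeping (handling lengths $\le k$ directly, proving the strict case first, and verifying that an intermediate $u$ exists when $l(x,z)=k+1\ge 2$) merely makes precise what the paper leaves implicit in ``Applying also the result of Case~1.''
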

	\begin{proof}
		\textit{Case 1.} $x<y<z$. Then $l(x,y),l(y,z)\le k$, so by the induction hypo\-the\-sis and \cref{mu_xy=e_xmu_xy-and-nu_xy=nu_xye_y-modulo-I_k+2} we have $\Psi(e_{xyy})=e_{xyy}+\nu_{xy}$ and $\Psi(e_{yyz})=e_{yyz}+\mu_{yz}$, where $\nu_{xy},\mu_{yz}\in J^3_{k+2}(P,R)$. Consequently, $\Psi(e_{xyz})=\Psi(e_{xyy})\Psi(e_{yyz})=(e_{xyy}+\nu_{xy})(e_{yyz}+\mu_{yz})=e_{xyz}+\eta_{xyz}$, where $\eta_{xyz}=e_{xyy}\mu_{yz}+\nu_{xy}e_{yyz}+\nu_{xy}\mu_{yz}$ belongs to $J^3_{k+2}(P,R)$ because $J^3_{k+2}(P,R)$ is an ideal.
		
		\textit{Case 2.} $x=y<z$. Choose an arbitrary $x<u<z$ and write $e_{xxu}e_{xuz}=e_{xxz}+\sum_{x<v\le u}e_{xvz}$. Observe that $l(x,u)\le k$, so $\Psi(e_{xxu})=e_{xxu}+\mu_{xu}$ for some $\mu_{xu}\in J^3_{k+2}(P,R)$ by the induction hypothesis and \cref{mu_xy=e_xmu_xy-and-nu_xy=nu_xye_y-modulo-I_k+2}. Applying also the result of Case 1 to $e_{xuz}$ and $e_{xvz}$, $x<v\le u$, we get
		\begin{align*}
			\Psi(e_{xxz})&=\Psi(e_{xxu})\Psi(e_{xuz})-\sum_{x<v\le u}\Psi(e_{xvz})\\
			&=(e_{xxu}+\mu_{xu})(e_{xuz}+\eta_{xuz})-\sum_{x<v\le u}(e_{xvz}+\eta_{xvz})\\
			&=e_{xxu}e_{xuz}-\sum_{x<v\le u}e_{xvz}+\mu_{xz}=e_{xxz}+\mu_{xz},
		\end{align*}
		where
		\begin{align*}
			\mu_{xz}=e_{xxu}\eta_{xuz}+\mu_{xu}e_{xuz}-\sum_{x<v\le u}\eta_{xvz}\in J^3_{k+2}(P,R).
		\end{align*}
		
		\textit{Case 3.} $x<y=z$. We choose $x<u<z$ and write $e_{xuz}e_{uzz}=e_{xzz}+\sum_{u\le v<z}e_{xvz}$. The rest of the proof is similar to that of Case 2.
	\end{proof}

	\begin{lem}\label{rho-in-I_k+2}
		For any $x\in P$ one has $\Psi(e_x)-e_x\in J^3_{k+2}(P,R)$.
	\end{lem}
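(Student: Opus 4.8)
The statement to establish is precisely the remaining case $l(x,z)=0$ of the inductive step, namely that $\rho_x=\Psi(e_x)-e_x$ lies in $J^3_{k+2}(P,R)$. Applying the induction hypothesis to the triple $(x,x,x)$ already gives $\rho_x\in J^3_{k+1}(P,R)$, so it suffices to show that $\rho_x$ vanishes on every triple of length exactly $k+1$. By \cref{rho=e_xrho+rhoe_x-modulo-I_k+2} the only values $\rho_x(u,v,w)$ with $l(u,w)=k+1$ that are not yet known to vanish are those of the form $\rho_x(x,x,w)$ (with $l(x,w)=k+1$) and $\rho_x(u,x,x)$ (with $l(u,x)=k+1$). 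The plan is therefore to kill these two families separately; by the evident left-right symmetry it is enough to treat the first.

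The idea is to push a suitable orthogonality relation through $\Psi$. Fix $w$ with $l(x,w)=k+1$; then $w\ne x$, so $e_xe_{xww}=0$ by \cref{product-e_x_1...x_n.e_y_1...y_n}. Write $\Psi(e_{xww})=e_{xww}+\nu_{xw}$ with $\nu_{xw}\in J^3_{k+2}(P,R)$ (this is \cref{Psi(e_xyz)-e_xyz-in-J^3_k+2} in the case $x<y=z$). Applying $\Psi$ to $e_xe_{xww}=0$ and expanding $(e_x+\rho_x)(e_{xww}+\nu_{xw})=0$, the term $e_xe_{xww}$ disappears and one is left with $e_x\nu_{xw}+\rho_xe_{xww}+\rho_x\nu_{xw}=0$. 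I would then evaluate this identity at $(x,x,w)$: the summand $e_x\nu_{xw}$ lies in the ideal $J^3_{k+2}(P,R)$ and so vanishes there, the summand $\rho_x\nu_{xw}$ vanishes by \cref{fg(xyz)-for-l(xz)=j} because $\nu_{xw}(x,w,w)=0$, while \cref{product-e_x_1...x_n.e_y_1...y_n} shows that $(\rho_xe_{xww})(x,x,w)=\rho_x(x,x,w)$. Hence $\rho_x(x,x,w)=0$. Symmetrically, feeding $e_{uux}e_x=0$ through $\Psi$ and evaluating at $(u,x,x)$ yields $\rho_x(u,x,x)=0$ for all $u$ with $l(u,x)=k+1$. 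Together with \cref{rho=e_xrho+rhoe_x-modulo-I_k+2} this proves that $\rho_x$ vanishes on all triples of length $k+1$, whence $\rho_x\in J^3_{k+2}(P,R)$.

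The only real obstacle is choosing the identity to transport through $\Psi$, since the value $\rho_x(x,x,w)$ has to be isolated with coefficient $1$ after evaluation. The naive relations $e_x^2=e_x$ and $e_xe_{xxy}=e_{xxy}$ are useless here: the former only reproduces the relation $\rho_x(x,x,w)\rho_x(x,w,w)=0$ already used in \cref{rho=e_xrho+rhoe_x-modulo-I_k+2}, and in the latter the factor $e_{xxy}$ sits at length one, so its product with $\rho_x$ never reaches the slot $(x,x,w)$. The orthogonality relation $e_xe_{xww}=0$ is exactly the right choice because $e_{xww}$ sits at length $k+1$ and, placed on the right of $\rho_x$, transports $\rho_x(x,x,w)$ to the position $(x,x,w)$ with coefficient $1$, while every other contribution is forced into $J^3_{k+2}(P,R)$ or is annihilated by \cref{fg(xyz)-for-l(xz)=j}. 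Once this relation is identified, the computation is routine.
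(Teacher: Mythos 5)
Your proof is correct and follows essentially the same route as the paper: after reducing via \cref{rho=e_xrho+rhoe_x-modulo-I_k+2} to the values $\rho_x(x,x,w)$ and $\rho_x(u,x,x)$ at length $k+1$, you transport the orthogonality $e_xe_{xww}=0$ (and symmetrically $e_{uux}e_x=0$) through $\Psi$ using $\nu_{xw}\in J^3_{k+2}(P,R)$ from \cref{Psi(e_xyz)-e_xyz-in-J^3_k+2} and evaluate at $(x,x,w)$, exactly as the paper does with $e_xe_{xyy}=0$ and $e_{zzx}e_x=0$.
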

	\begin{proof}
		Recall that $\Psi(e_x)=e_x+\rho_x$, where $\rho_x\in J^3_{k+1}(P,R)$ by the induction hypothesis. In view of \cref{rho=e_xrho+rhoe_x-modulo-I_k+2}, to prove that $\rho_x\in J^3_{k+2}(P,R)$, it remains to show that $\rho_x(x,x,y)=\rho_x(z,x,x)=0$ if $l(x,y)=l(z,x)=k+1$. We know by \cref{Psi(e_xyz)-e_xyz-in-J^3_k+2} that $\Psi(e_{xyy})=e_{xyy}+\nu_{xy}$, where $\nu_{xy}\in J^3_{k+2}(P,R)$. Applying $\Psi$ to $e_xe_{xyy}=0$, we have
		\begin{align}\label{e_xnu_xy+rho_xe_xyy+rho_xnu_xy}
			e_x\nu_{xy}+\rho_xe_{xyy}+\rho_x\nu_{xy}=0.
		\end{align}
		Evaluating \cref{e_xnu_xy+rho_xe_xyy+rho_xnu_xy} at $(x,x,y)$ we obtain $\rho_x(x,x,y)=(\rho_xe_{xyy})(x,x,y)=0$, because $e_x\nu_{xy},\rho_x\nu_{xy}\in J^3_{k+2}(P,R)$. Similarly $\rho_x(z,x,x)=0$ follows from $e_{zzx}e_x=0$.
	\end{proof}

\begin{prop}
	The automorphism $\Psi$ is the identity map.
\end{prop}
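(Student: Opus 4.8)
The plan is to recognize that the substantive work is already done, and that the preceding results of this subsection assemble into a single induction on the length $l(x,z)$ whose only remaining ingredient is the finiteness of $P$. Concretely, \cref{Psi(e_xyz)-e_xyz-in-Z_3} furnishes the base of an induction on $k\ge 1$: it is exactly the assertion that $\Psi(e_{xyz})-e_{xyz}\in J^3_2(P,R)=J^3_{1+1}(P,R)$ for all $x\le y\le z$ with $l(x,z)\le 1$. Meanwhile \cref{Psi(e_xyz)-e_xyz-in-J^3_k+2,rho-in-I_k+2} together realize the inductive step: under the hypothesis $\Psi(e_{xyz})-e_{xyz}\in J^3_{k+1}(P,R)$ for all $x\le y\le z$ with $l(x,z)\le k$, the former disposes of the triples of positive length $l(x,z)\le k+1$ and the latter of the diagonal elements $e_x$ (length $0$), so that $\Psi(e_{xyz})-e_{xyz}\in J^3_{k+2}(P,R)$ whenever $l(x,z)\le k+1$.

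First I would record the conclusion of this induction explicitly: for every $k\ge 1$ and every $x\le y\le z$ with $l(x,z)\le k$, one has $\Psi(e_{xyz})-e_{xyz}\in J^3_{k+1}(P,R)$. Then I would invoke the finiteness of $P$. Since $P$ is finite, $l(P)=m<\infty$, and every triple $(x,y,z)\in P^3_\le$ satisfies $l(x,z)\le m$. Taking $k=m$ in the induction yields $\Psi(e_{xyz})-e_{xyz}\in J^3_{m+1}(P,R)$ for \emph{all} triples; but $J^3_{m+1}(P,R)=\{0\}$ by the filtration displayed at the beginning of \cref{sec-ideals}, precisely because no triple of $P$ has length exceeding $m$. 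Hence $\Psi(e_{xyz})=e_{xyz}$ for every basis element, and since $\{e_{xyz}\mid x\le y\le z\}$ is a basis of $I^3(P,R)$ and $\Psi$ is $R$-linear, we conclude $\Psi=\id_{I^3(P,R)}$.

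I do not expect a genuine obstacle here, since the delicate estimates were all discharged in the lemmas above; the task is simply to close the induction and cash in the vanishing of $J^3_{m+1}(P,R)$. The one point worth stating carefully is exactly this collapse of the filtration for a finite poset, which is what upgrades the ``$\Psi(e_{xyz})$ agrees with $e_{xyz}$ modulo higher length'' estimates into the exact equality $\Psi(e_{xyz})=e_{xyz}$ once the threshold $k$ is pushed past $l(P)$.
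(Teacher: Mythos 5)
Your proof is correct and takes essentially the same route as the paper's: the same induction on $k$ with base case \cref{Psi(e_xyz)-e_xyz-in-Z_3} and inductive step supplied jointly by \cref{Psi(e_xyz)-e_xyz-in-J^3_k+2,rho-in-I_k+2}, concluded by taking $k=l(P)$ so that $J^3_{l(P)+1}(P,R)=\{0\}$. Your explicit remarks on the collapse of the filtration for a finite poset and on $R$-linearity over the basis $\{e_{xyz}\}$ simply spell out what the paper leaves implicit.
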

\begin{proof}
	By induction on $k\ge 1$ with base case \cref{Psi(e_xyz)-e_xyz-in-Z_3} and inductive step \cref{Psi(e_xyz)-e_xyz-in-J^3_k+2,rho-in-I_k+2} one proves that for all $k\ge 1$ and for all $x\le y\le z$ such that $0\le l(x,z)\le k$ one has $\Psi(e_{xyz})-e_{xyz}\in J^3_{k+1}(P,R)$. Taking $k=l(P)$, we obtain the desired result.
\end{proof}

As a consequence, we have the following.
	\begin{thrm}\label{isomorphisms-I^3(P_R)->I^3(Q_R)}
		Let $P,Q$ be finite posets and $R$ an indecomposable commutative unital ring. Then any isomorphism $\Phi:I^3(P,R)\to I^3(Q,R)$ is of the form $\wh\vf$
		for some poset isomorphism $\vf:P\to Q$.
	\end{thrm}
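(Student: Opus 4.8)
The plan is to read the theorem off the machinery assembled above, so that its proof is a short assembly step rather than a fresh argument. Starting from an arbitrary $R$-linear isomorphism $\Phi:I^3(P,R)\to I^3(Q,R)$, I would invoke \cref{I^3(P)-cong-I^3(Q)=>P-cong-Q} to obtain the induced bijection $\vf:P\to Q$ (coming from the primitive idempotents of $I^3(P,R)/J^3_1(P,R)$ as in \cref{Phi-maps-e_x-to-e_vf(x)+rho_x}) and to recall that $\vf$ is in fact a poset isomorphism. This makes the induced algebra isomorphism $\wh\vf:I^3(P,R)\to I^3(Q,R)$ available; being invertible, it lets me form the automorphism $\Psi=(\wh\vf)\m\circ\Phi$ of $I^3(P,R)$.

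The second step is to note that $\Psi$ induces the identity map on $P$. Since $\wh\vf$ carries $e_x$ to $e_{\vf(x)}$, it induces $\vf$ on the idempotents modulo $J^3_1(P,R)$, hence $(\wh\vf)\m$ induces $\vf\m$; composing with $\Phi$, which induces $\vf$, yields the identity bijection of $P$. This is precisely the situation treated in the proposition immediately preceding the theorem, whose conclusion is $\Psi=\id_{I^3(P,R)}$. Unwinding $\Psi=(\wh\vf)\m\circ\Phi$ then gives $\Phi=\wh\vf\circ\Psi=\wh\vf$, which is exactly the assertion of the theorem.

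No genuinely new difficulty arises at this final stage; the substance of the argument lies in the results already proved. The real obstacle, handled earlier, is the verification that $\Psi$ is trivial, which rests on the induction on $l(x,z)$ controlling the higher corrections $\rho_x,\mu_{xy},\nu_{xy},\eta_{xyz}$ modulo the ideals $J^3_k(P,R)$ (culminating in \cref{Psi(e_xyz)-e_xyz-in-J^3_k+2,rho-in-I_k+2}). The one bookkeeping point I would double-check is that the bijection $\vf$ produced by \cref{I^3(P)-cong-I^3(Q)=>P-cong-Q} is the very same bijection implicit in the construction of $\Psi$, so that $\Psi$ genuinely induces the identity on $P$ and the preceding proposition applies without modification.
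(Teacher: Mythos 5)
Your proposal is correct and matches the paper's own proof exactly: the paper also obtains $\vf$ from \cref{I^3(P)-cong-I^3(Q)=>P-cong-Q}, forms $\Psi=(\wh\vf)\m\circ\Phi$, observes that $\Psi$ induces the identity on $P$, and invokes the preceding proposition ($\Psi=\id_{I^3(P,R)}$) to conclude $\Phi=\wh\vf$. Your bookkeeping check that the $\vf$ entering $\Psi$ is the same bijection as in \cref{Phi-maps-e_x-to-e_vf(x)+rho_x} is exactly how the paper sets things up, so nothing further is needed.
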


\begin{cor}\label{}
	Let $P$ be a finite poset and $R$ an indecomposable commutative unital ring. Then the group $\Aut(I^3(P,R))$ is isomorphic to $\Aut(P)$.
\end{cor}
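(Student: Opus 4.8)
The plan is to produce an explicit group isomorphism $\Theta\colon\Aut(P)\to\Aut(I^3(P,R))$ by setting $\Theta(\vf)=\wh\vf$, where $\wh\vf$ is the automorphism induced by the poset automorphism $\vf$ via the construction of $\wh\vf$ preceding \cref{isomorphisms-I^3(P_R)->I^3(Q_R)} (taken with $Q=P$). That proposition already guarantees $\wh\vf\in\Aut(I^3(P,R))$ for every $\vf\in\Aut(P)$, so $\Theta$ is well defined.

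First I would check that $\Theta$ is a group homomorphism. Since $\{e_{xyz}\}$ is a basis of $I^3(P,R)$ and every map in sight is $R$-linear, it suffices to compare the two sides on basis elements. For $\vf,\psi\in\Aut(P)$ one has $(\wh\vf\circ\wh\psi)(e_{xyz})=\wh\vf(e_{\psi(x)\psi(y)\psi(z)})=e_{\vf\psi(x)\,\vf\psi(y)\,\vf\psi(z)}=\wh{\vf\psi}(e_{xyz})$, while $\wh{\id_P}=\id_{I^3(P,R)}$ is immediate. Hence $\Theta(\vf)\,\Theta(\psi)=\Theta(\vf\psi)$.

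Next I would establish bijectivity. Surjectivity is precisely the content of \cref{isomorphisms-I^3(P_R)->I^3(Q_R)} in the case $Q=P$: every automorphism of $I^3(P,R)$ is of the form $\wh\vf$ for some poset automorphism $\vf$. For injectivity, suppose $\wh\vf=\id_{I^3(P,R)}$. Evaluating on the diagonal idempotents $e_{xxx}$ gives $e_{\vf(x)\vf(x)\vf(x)}=\wh\vf(e_{xxx})=e_{xxx}$ for every $x\in P$, which forces $\vf(x)=x$; thus $\vf=\id_P$ and $\Theta$ is injective.

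Combining these, $\Theta$ is a bijective homomorphism, i.e.\ a group isomorphism $\Aut(P)\cong\Aut(I^3(P,R))$. The substantive work — that every automorphism arises from a poset automorphism — has already been carried out in \cref{isomorphisms-I^3(P_R)->I^3(Q_R)}, so no genuine obstacle remains here: the corollary is a formal consequence, the only points needing (routine) care being the verification of the homomorphism property on the basis and injectivity via evaluation on the $e_{xxx}$.
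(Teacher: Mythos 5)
Your proof is correct and follows exactly the route the paper intends: the corollary is stated as an immediate consequence of \cref{isomorphisms-I^3(P_R)->I^3(Q_R)} (which gives surjectivity of $\vf\mapsto\wh\vf$), and your verifications of the homomorphism property on the basis $\{e_{xyz}\}$ and of injectivity via the idempotents $e_{xxx}$ are the routine details the paper leaves implicit. Nothing is missing.
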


	\section{Derivations}\label{sec-der}
	
The description of derivations of $I(P,R)$ is similar to that of automorphisms. Each $R$-linear derivation of $I(P,R)$ is a sum of an inner derivation and the derivation induced by an additive map (see~\cite{Baclawski72}). Dropping the $R$-linearity condition (and maintaining only the additivity), one gets one more class of derivations of $I(P,R)$ as proved in~\cite{Kh-der} (for a more general class of ordered sets). For $I^3(P,R)$ the situation changes drastically. In fact, we will prove that only the zero map is an $R$-linear derivation of $I^3(P,R)$. We assume $P$ to be a finite poset and $R$ an arbitrary commutative ring.

	\begin{lem}\label{D(e_x)-as-a-sum}
		Let $D$ be a derivation of $I^3(P,R)$. Then for all $x\in X$
		\begin{align}\label{D(e_x)=sums_x<y-and-z<x}
		D(e_x)=\sum_{x<v}D(e_x)(x,x,v)e_{xxv}+\sum_{u<x}D(e_x)(u,x,x)e_{uxx}.
		\end{align}
	\end{lem}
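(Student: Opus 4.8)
The plan is to exploit that each $e_x=e_{xxx}$ is an idempotent and to feed the relation $e_x\cdot e_x=e_x$ into the Leibniz rule. Applying $D$ gives
\begin{align*}
D(e_x)=D(e_x\cdot e_x)=D(e_x)\,e_x+e_x\,D(e_x),
\end{align*}
so the entire content of the lemma is to understand how left and right multiplication by $e_x$ act on an arbitrary element and to read off the support of $D(e_x)$ from the resulting fixed-point identity.

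The first concrete step is to compute $e_x\,g$ and $g\,e_x$ for an arbitrary $g\in I^3(P,R)$. Expanding $g=\sum_{\mathbf p}g(\mathbf p)e_{\mathbf p}$ and applying \cref{product-e_x_1...x_n.e_y_1...y_n} to each product $e_{xxx}e_{\mathbf p}$ and $e_{\mathbf p}e_{xxx}$ (using that $\cI(x,x)=\{x\}$, so each surviving product is a single basis vector), I expect to obtain
\begin{align*}
e_x\,g=\sum_{x\le w}g(x,x,w)\,e_{xxw},\qquad g\,e_x=\sum_{u\le x}g(u,x,x)\,e_{uxx}.
\end{align*}
In words, $e_x\,g$ keeps only the part of $g$ supported on triples $(x,x,w)$, while $g\,e_x$ keeps only the part supported on triples $(u,x,x)$.

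Setting $h:=D(e_x)$ and substituting these two formulas into $h=h\,e_x+e_x\,h$, I would then compare coefficients in the basis $\{e_{\mathbf p}\}$. For any index $\mathbf p$ that is neither of the form $(u,x,x)$ with $u\le x$ nor of the form $(x,x,w)$ with $x\le w$, the right-hand side contributes nothing, so $h(\mathbf p)=0$; this already confines $D(e_x)$ to the two families appearing in the statement. The one point requiring care is the single overlap of these families, the diagonal index $(x,x,x)$: both $h\,e_x$ and $e_x\,h$ contribute $h(x,x,x)$ there, so matching the $e_{xxx}$-coefficient yields $h(x,x,x)=2h(x,x,x)$, i.e.\ $D(e_x)(x,x,x)=0$. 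I expect this harmless double-counting to be the only genuine subtlety: it is exactly what deletes the diagonal term and upgrades the inequalities $u\le x$, $x\le w$ in the support to the strict ones $u<x$, $x<v$ of \cref{D(e_x)=sums_x<y-and-z<x}. Collecting the surviving coefficients then yields the asserted expression for $D(e_x)$.
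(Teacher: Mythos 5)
Your proposal is correct and follows essentially the same route as the paper: apply the Leibniz rule to $e_x\cdot e_x=e_x$ to get $D(e_x)=e_xD(e_x)+D(e_x)e_x$, use \cref{product-e_x_1...x_n.e_y_1...y_n} (with $\cI(x,x)=\{x\}$) to see that left and right multiplication by $e_x$ project onto the families $e_{xxw}$ and $e_{uxx}$, and handle the overlap at $(x,x,x)$ via $D(e_x)(x,x,x)=2D(e_x)(x,x,x)$. The paper's proof is exactly this computation, including the final evaluation at $(x,x,x)$ that turns the weak inequalities into strict ones.
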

	\begin{proof}
		Since $e_x$ is an idempotent, we clearly have $D(e_x)=e_xD(e_x)+D(e_x)e_x$. Therefore,
		\begin{align*}
			D(e_x)&=e_x\left(\sum_{u\le v\le w}D(e_x)(u,v,w)e_{uvw}\right)+\left(\sum_{u\le v\le w}D(e_x)(u,v,w)e_{uvw}\right)e_x\\
			&=\sum_{x\le v}D(e_x)(x,x,v)e_{xxv}+\sum_{u\le x}D(e_x)(u,x,x)e_{uxx}.
		\end{align*}
		Evaluating this at $(x,x,x)$, we see that $D(e_x)(x,x,x)=2D(e_x)(x,x,x)$, whence $D(e_x)(x,x,x)=0$. This proves \cref{D(e_x)=sums_x<y-and-z<x}.
	\end{proof}

\begin{lem}\label{D(e_xy..y)-and-D(e_x..xy)}
	Let $D$ be a derivation of $I^3(P,R)$. Then for all $x<y$
	\begin{align}
	D(e_{xyy})&=\sum_{y<v}D(e_y)(y,y,v)e_{xyv}+\sum_{u\le y}D(e_{xyy})(u,y,y)e_{uyy},\label{D(e_xy..y)=sums_y<u-and-v<y}\\
	D(e_{xxy})&=\sum_{x\le v}D(e_{xxy})(x,x,v)e_{xxv}+\sum_{u<x}D(e_x)(u,x,x)e_{uxy}.\label{D(e_x..xy)=sums_x<u-and-v<x}
	\end{align}
\end{lem}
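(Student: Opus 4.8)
The plan is to deduce both identities from the factorizations
\begin{align*}
e_{xyy}=e_{xyy}e_y\quad\text{and}\quad e_{xxy}=e_xe_{xxy},
\end{align*}
each of which is immediate from \cref{product-e_x_1...x_n.e_y_1...y_n}, and then to apply the Leibniz rule together with the basis expansion of $D$. For the first identity I would write
\begin{align*}
D(e_{xyy})=D(e_{xyy}e_y)=D(e_{xyy})e_y+e_{xyy}D(e_y),
\end{align*}
expand each occurrence of $D$ in the basis $\{e_{uvw}\}$, and evaluate the two resulting products via \cref{product-e_x_1...x_n.e_y_1...y_n}. In $D(e_{xyy})e_y$ a basis term $e_{uvw}e_{yyy}$ survives only when $(v,w)=(y,y)$, in which case it equals $e_{uyy}$; summing over the surviving indices reproduces exactly the second sum $\sum_{u\le y}D(e_{xyy})(u,y,y)e_{uyy}$. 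In $e_{xyy}D(e_y)$ a term $e_{xyy}e_{uvw}$ survives only when $(u,v)=(y,y)$, in which case it equals $e_{xyw}$; this yields $\sum_{y\le v}D(e_y)(y,y,v)e_{xyv}$.

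The one point needing attention is the boundary term $v=y$ in this last sum, whose coefficient is $D(e_y)(y,y,y)$. By \cref{D(e_x)-as-a-sum} we have $D(e_y)(y,y,y)=0$, so this term drops out and the index range tightens to $y<v$, which gives \cref{D(e_xy..y)=sums_y<u-and-v<y}. The second identity is entirely symmetric: applying the Leibniz rule to $e_{xxy}=e_xe_{xxy}$, the term $e_xD(e_{xxy})$ contributes $\sum_{x\le v}D(e_{xxy})(x,x,v)e_{xxv}$ (a product $e_{xxx}e_{uvw}$ survives only for $(u,v)=(x,x)$), while $D(e_x)e_{xxy}$ contributes $\sum_{u\le x}D(e_x)(u,x,x)e_{uxy}$ (a product $e_{uvw}e_{xxy}$ survives only for $(v,w)=(x,x)$). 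Here the vanishing $D(e_x)(x,x,x)=0$ from \cref{D(e_x)-as-a-sum} removes the $u=x$ term, reducing that sum to $u<x$ and producing \cref{D(e_x..xy)=sums_x<u-and-v<x}.

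I do not expect any serious obstacle here: the argument is a direct computation once the correct factorizations are chosen. The only care required is the careful bookkeeping of which basis products survive under the convolution rule of \cref{product-e_x_1...x_n.e_y_1...y_n}, namely that the middle coordinates of the two factors must match, and the invocation of the diagonal vanishing of \cref{D(e_x)-as-a-sum} to convert the weak inequality in the flag-index summation into the strict one recorded in the statement.
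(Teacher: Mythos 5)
Your argument is correct and is essentially the paper's own proof: both use the factorizations $e_{xyy}=e_{xyy}e_y$ and $e_{xxy}=e_xe_{xxy}$, apply the Leibniz rule, expand in the basis $\{e_{uvw}\}$, and evaluate the surviving products via \cref{product-e_x_1...x_n.e_y_1...y_n}, with the strict index ranges coming from \cref{D(e_x)-as-a-sum}. The only cosmetic difference is that the paper substitutes the structured expansion of $D(e_y)$ (resp.\ $D(e_x)$) from \cref{D(e_x)-as-a-sum} directly, whereas you expand in the full basis and then invoke the diagonal vanishing $D(e_y)(y,y,y)=0$ separately; these are the same computation.
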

\begin{proof}
	Since $e_{xyy}=e_{xyy}e_y$, we have $D(e_{xyy})=e_{xyy}D(e_y)+D(e_{xyy})e_y$. Then using \cref{D(e_x)-as-a-sum} we obtain
	\begin{align*}
		D(e_{xyy})&=e_{xyy}\left(\sum_{y<v}D(e_y)(y,y,v)e_{yyv}+\sum_{u<y}D(e_y)(u,y,y)e_{uyy}\right)\\
		&\quad+\left(\sum_{a\le b\le c}D(e_{xyy})(a,b,c)e_{abc}\right)e_y,
	\end{align*}
	which yields \cref{D(e_xy..y)=sums_y<u-and-v<y}. Similarly \cref{D(e_x..xy)=sums_x<u-and-v<x} follows from $e_{xxy}=e_xe_{xxy}$.
\end{proof}

\begin{lem}\label{D-annihilates-e_x}
	Let $D$ be a derivation of $I^3(P,R)$. Then for all $x\in X$ 
	\begin{align}\label{D(e_x)-is-zero}
		D(e_x)=0.
	\end{align}
\end{lem}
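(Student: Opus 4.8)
The plan is to start from \cref{D(e_x)-as-a-sum}, which gives $D(e_x)=\sum_{x<v}D(e_x)(x,x,v)e_{xxv}+\sum_{u<x}D(e_x)(u,x,x)e_{uxx}$, and to show that every coefficient $D(e_x)(x,x,v)$ (for $v>x$) and $D(e_x)(u,x,x)$ (for $u<x$) vanishes. The mechanism in each case is the same: I would apply $D$ to a product of two generators that equals $0$ by \cref{product-e_x_1...x_n.e_y_1...y_n}, so that the Leibniz rule yields a relation $D(a)b+aD(b)=0$ in which one summand vanishes for support reasons, while the other isolates a single unknown coefficient of $D(e_x)$ as the coefficient of a nonzero basis combination.

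For the ``upper'' coefficients, fix $v>x$ and apply $D$ to $e_xe_{xvv}=0$, obtaining $D(e_x)e_{xvv}+e_xD(e_{xvv})=0$. By \cref{product-e_x_1...x_n.e_y_1...y_n} one has $e_{xxw}e_{xvv}=0$ for $w\ne v$, $e_{xxv}e_{xvv}=\sum_{x\le z\le v}e_{xzv}$ and $e_{uxx}e_{xvv}=0$, so that $D(e_x)e_{xvv}=D(e_x)(x,x,v)\sum_{x\le z\le v}e_{xzv}$. On the other hand, by \cref{D(e_xy..y)-and-D(e_x..xy)} the element $D(e_{xvv})$ is supported on triples whose middle entry equals $v$; since left multiplication by $e_x$ retains only triples of the form $(x,x,\cdot)$, we get $e_xD(e_{xvv})=0$. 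Hence $D(e_x)(x,x,v)\sum_{x\le z\le v}e_{xzv}=0$, and as the $e_{xzv}$ are distinct basis elements this forces $D(e_x)(x,x,v)=0$.

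The ``lower'' coefficients are handled by the mirror argument: for $u<x$ I would apply $D$ to $e_{uux}e_x=0$, giving $D(e_{uux})e_x+e_{uux}D(e_x)=0$. Here \cref{product-e_x_1...x_n.e_y_1...y_n} yields $e_{uux}D(e_x)=D(e_x)(u,x,x)\sum_{u\le z\le x}e_{uzx}$, while \cref{D(e_xy..y)-and-D(e_x..xy)} shows $D(e_{uux})$ is supported on triples with middle entry $u$, so right multiplication by $e_x$ (which keeps only triples $(\cdot,x,x)$) annihilates it and $D(e_{uux})e_x=0$. Thus $D(e_x)(u,x,x)=0$, and combining both parts gives $D(e_x)=0$.

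The main obstacle is choosing the auxiliary products rather than carrying out the computations. The obvious orthogonality relations $e_xe_y=0$ for $x\ne y$ produce only the tautology $0=0$ after differentiation, since both $D(e_x)e_y$ and $e_xD(e_y)$ vanish identically on the relevant supports. The useful products are those, like $e_xe_{xvv}$ and $e_{uux}e_x$, that still evaluate to $0$ yet are ``close enough'' to the nonzero product $e_{xxv}e_{xvv}=\sum_{x\le z\le v}e_{xzv}$ that differentiating extracts exactly one coefficient. The rest is bookkeeping: confirming via \cref{D(e_xy..y)-and-D(e_x..xy)} that the middle entries in the supports of $D(e_{xvv})$ and $D(e_{uux})$ never equal $x$, and that the chain-sums $\sum_{x\le z\le v}e_{xzv}$ and $\sum_{u\le z\le x}e_{uzx}$ are nonzero (their $e_{xxv}$, resp. $e_{uux}$, components have coefficient $1$), which makes the argument valid over an arbitrary commutative ring.
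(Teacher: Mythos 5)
Your proposal is correct and takes essentially the same route as the paper's proof: you differentiate the vanishing products $e_xe_{xvv}=0$ and $e_{uux}e_x=0$ (the paper's $e_xe_{xyy}=0$ and $e_{zzx}e_x=0$), use \cref{D(e_xy..y)-and-D(e_x..xy)} to kill the cross term for support reasons, and read off each coefficient of $D(e_x)$ from the nonzero chain-sum $e_{xxv}e_{xvv}=\sum_{x\le z\le v}e_{xzv}$, concluding via \cref{D(e_x)-as-a-sum}. The only difference is cosmetic: you justify the vanishing of $e_xD(e_{xvv})$ and $D(e_{uux})e_x$ by the uniform ``middle entry'' observation, where the paper expands the sums termwise.
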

\begin{proof}
	Let $x<y$. Then $e_xe_{xyy}=0$. Therefore, $D(e_x)e_{xyy}+e_xD(e_{xyy})=0$. In view of \cref{D(e_x)-as-a-sum,D(e_xy..y)-and-D(e_x..xy)} we have
	\begin{align*}
		0&=\left(\sum_{x<v}D(e_x)(x,x,v)e_{xxv}+\sum_{u<x}D(e_x)(u,x,x)e_{uxx}\right)e_{xyy}\\
		&\quad+e_x\left(\sum_{y<v}D(e_y)(y,y,v)e_{xyv}+\sum_{u\le y}D(e_{xyy})(u,y,y)e_{uyy}\right)\\
		&=D(e_x)(x,x,y)e_{xxy}e_{xyy}=D(e_x)(x,x,y)\sum_{x\le z\le y}e_{xzy}.
	\end{align*}
	It follows that $D(e_x)(x,x,y)=0$. Similarly $e_{zzx}e_x=0$ implies $D(e_x)(z,x,x)=0$ for all $z<x$. Thus, \cref{D(e_x)-is-zero} is clear by \cref{D(e_x)-as-a-sum}.
\end{proof}

\begin{lem}\label{D(e_xzy)-as-a-sum}
	Let $D$ be a derivation of $I^3(P,R)$. Then for all $x<z<y$ 
	\begin{align}\label{D(e_xzy)=sum-D(e_xzz)-and-D(e_zzy)}
	D(e_{xzy})=\sum_{u\le z}D(e_{xzz})(u,z,z)e_{uzy}+\sum_{z\le v}D(e_{zzy})(z,z,v)e_{xzv}.
	\end{align}
\end{lem}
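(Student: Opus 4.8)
The plan is to realize $e_{xzy}$ as a product of two basis elements whose $D$-images are already understood, and then push the Leibniz rule through the multiplication formula of \cref{product-e_x_1...x_n.e_y_1...y_n}. Since $\cI((z,z))=\lf z,z\rf=\{z\}$, that formula collapses to a single term and gives $e_{xzz}e_{zzy}=e_{xzy}$. Applying $D$ to this identity yields
\begin{align*}
D(e_{xzy})=D(e_{xzz})e_{zzy}+e_{xzz}D(e_{zzy}),
\end{align*}
so everything reduces to computing the two products on the right.

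For the first product I would expand $D(e_{xzz})=\sum_{a\le b\le c}D(e_{xzz})(a,b,c)e_{abc}$ in the basis and multiply on the right by $e_{zzy}$. By \cref{product-e_x_1...x_n.e_y_1...y_n} the term $e_{abc}e_{zzy}$ vanishes unless the last two coordinates of the left factor match the first two of the right factor, i.e.\ unless $(b,c)=(z,z)$; and when $(b,c)=(z,z)$ it collapses (again because $\lf z,z\rf=\{z\}$) to the single element $e_{azy}$. Hence only the coefficients $D(e_{xzz})(a,z,z)$ with $a\le z$ survive, giving
\begin{align*}
D(e_{xzz})e_{zzy}=\sum_{u\le z}D(e_{xzz})(u,z,z)e_{uzy}.
\end{align*}

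The second product is handled symmetrically: writing $D(e_{zzy})$ in the basis and multiplying on the left by $e_{xzz}$, the factor $e_{xzz}e_{abc}$ vanishes unless $(a,b)=(z,z)$, in which case it equals $e_{xzc}$. Thus only the coefficients $D(e_{zzy})(z,z,c)$ with $c\ge z$ remain, so that $e_{xzz}D(e_{zzy})=\sum_{z\le v}D(e_{zzy})(z,z,v)e_{xzv}$. Adding the two contributions reproduces \cref{D(e_xzy)=sum-D(e_xzz)-and-D(e_zzy)} exactly.

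I do not anticipate a genuine obstacle: the whole argument is the Leibniz rule combined with the two index-matching selections built into \cref{product-e_x_1...x_n.e_y_1...y_n}. The only points needing a little care are bookkeeping ones, namely checking that the surviving index ranges are precisely $u\le z$ and $v\ge z$ (which hold because $x<z<y$ forces $u\le z\le y$ and $x\le z\le v$, so every triple listed lies in $P^3_\le$), and observing that, in contrast to the companion reductions in \cref{D(e_xy..y)-and-D(e_x..xy)}, one does not even need $D(e_z)=0$ here: the vanishing of the unwanted terms is automatic from the product formula rather than from \cref{D-annihilates-e_x}.
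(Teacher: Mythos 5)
Your proof is correct, and it shares the paper's skeleton: both start from the factorization $e_{xzy}=e_{xzz}e_{zzy}$ (a single-term product, since $\cI((z,z))=\lf z,z\rf=\{z\}$) and apply the Leibniz rule. The genuine divergence is in how the two resulting products are evaluated. The paper first rewrites $D(e_{xzz})$ and $D(e_{zzy})$ in the restricted forms supplied by \cref{D(e_xy..y)-and-D(e_x..xy)} combined with $D(e_z)=0$ from \cref{D-annihilates-e_x}, and only then multiplies; you instead expand both derivatives in the full basis and let the index-matching condition of \cref{product-e_x_1...x_n.e_y_1...y_n} annihilate everything except the terms whose relevant coordinate pair is $(z,z)$. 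Your closing observation is accurate and checks out: even the terms that \cref{D-annihilates-e_x} removes in the paper's route (e.g.\ $e_{xzv}$ with $v>z$ in the expansion of $D(e_{xzz})$, whose last pair $(z,v)\ne(z,z)$) would die anyway upon multiplication by $e_{zzy}$ on the right, and symmetrically on the other side. So your argument makes this lemma logically independent of \cref{D-annihilates-e_x,D(e_xy..y)-and-D(e_x..xy)}, at the cost of redoing a small product computation that the paper gets for free from results already in hand: the paper's version is shorter in context, yours is more self-contained.
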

\begin{proof}
	Observe that $e_{xzy}=e_{xzz}e_{zzy}$, so in view of \cref{D-annihilates-e_x,D(e_xy..y)-and-D(e_x..xy)}
	\begin{align*}
		D(e_{xzy})&=D(e_{xzz})e_{zzy}+e_{xzz}D(e_{zzy})\\
		&=\left(\sum_{u\le z}D(e_{xzz})(u,z,z)e_{uzz}\right)e_{zzy}+e_{xzz}\left(\sum_{z\le v}D(e_{zzy})(z,z,v)e_{zzv}\right),
	\end{align*}
	giving \cref{D(e_xzy)=sum-D(e_xzz)-and-D(e_zzy)}.
\end{proof}

\begin{lem}\label{D-annihilates-e_xxy-and-e_xyy}
	Let $D$ be a derivation of $I^3(P,R)$. Then for all $x<y$ 
	\begin{align}\label{D(e_xxy)=D(e_xyy)=zero}
	D(e_{xxy})=D(e_{xyy})=0.
	\end{align}
\end{lem}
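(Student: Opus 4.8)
The plan is to prove both equalities simultaneously by induction on $l(x,y)$, using throughout the product rule \cref{product-e_x_1...x_n.e_y_1...y_n} together with the already established vanishing $D(e_x)=0$ from \cref{D-annihilates-e_x}. Feeding $D(e_x)=0$ into the two formulas of \cref{D(e_xy..y)-and-D(e_x..xy)} kills in each of them the summation whose coefficients are values of $D$ on an idempotent, leaving the reduced expressions $D(e_{xyy})=\sum_{u\le y}D(e_{xyy})(u,y,y)e_{uyy}$ and $D(e_{xxy})=\sum_{x\le v}D(e_{xxy})(x,x,v)e_{xxv}$. Thus $D(e_{xyy})$ is supported on the triples $(u,y,y)$ and $D(e_{xxy})$ on the triples $(x,x,v)$, and the same reduction applies to every pair since it only uses $D(e_x)=0$.

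The next step is to collapse each of these to a single scalar multiple. Fix $u\le y$ with $u\ne x$. Since the middle pair of $e_{uuy}$ is $(u,y)\ne(x,y)$, one has $e_{uuy}e_{xyy}=0$; applying $D$ and using $D(e_u)=0$ gives $D(e_{uuy})e_{xyy}+e_{uuy}D(e_{xyy})=0$. Here $D(e_{uuy})e_{xyy}=0$ because $D(e_{uuy})$ is supported on triples $(u,u,v)$ and $e_{uuv}e_{xyy}=0$ for $u\ne x$, whereas $e_{uuy}D(e_{xyy})=D(e_{xyy})(u,y,y)\sum_{u\le z\le y}e_{uzy}$; hence $D(e_{xyy})(u,y,y)=0$. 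Letting $u$ run over all admissible values different from $x$ (including $u=y$, which disposes of the diagonal term $e_{yyy}$) leaves only the coefficient at $u=x$, so $D(e_{xyy})=c\,e_{xyy}$ with $c=D(e_{xyy})(x,y,y)$. The symmetric computation, starting from $e_{xxy}e_{xvv}=0$ for $v\ne y$, yields $D(e_{xxy})=d\,e_{xxy}$ with $d=D(e_{xxy})(x,x,y)$.

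It remains to show $c=d=0$, and this is where the induction enters. Applying $D$ to $e_{xxy}e_{xyy}=\sum_{x\le z\le y}e_{xzy}$ (a case of \cref{product-e_x_1...x_n.e_y_1...y_n}) produces on the left $(c+d)\sum_{x\le z\le y}e_{xzy}$. On the right the two extreme terms are $D(e_{xxy})=d\,e_{xxy}$ and $D(e_{xyy})=c\,e_{xyy}$, while for each intermediate $z$ with $x<z<y$ the induction hypothesis forces $D(e_{xzy})=0$: indeed $l(x,z),l(z,y)<l(x,y)$, so $D(e_{xzz})=D(e_{zzy})=0$ by induction, and then \cref{D(e_xzy)-as-a-sum} expresses $D(e_{xzy})$ entirely in terms of these, giving $0$. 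Comparing the coefficients of $e_{xxy}$ and of $e_{xyy}$ on the two sides now reads $c+d=d$ and $c+d=c$, whence $c=d=0$. The base case $l(x,y)=1$ is identical except that no intermediate terms occur.

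The main obstacle is precisely this last step: the coefficient comparison only closes once the intermediate generators $e_{xzy}$ with $x<z<y$ are known to be annihilated, which is what compels the induction on $l(x,y)$ and the appeal to \cref{D(e_xzy)-as-a-sum} to split $e_{xzy}$ across the two strictly shorter intervals $\lf x,z\rf$ and $\lf z,y\rf$. Everything preceding it is routine bookkeeping with the product rule, amounting in each case to identifying the single term of a product that fails to vanish.
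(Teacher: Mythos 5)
Your proof is correct: the support reductions, the orthogonality collapse to $D(e_{xyy})=c\,e_{xyy}$ and $D(e_{xxy})=d\,e_{xxy}$, and the inductive handling of the intermediate terms all check out, and there is no circularity, since \cref{D(e_xzy)-as-a-sum} is proved independently of the present lemma. The route, however, genuinely differs from the paper's. The paper starts from the same identity $D(e_{xxy})e_{xyy}+e_{xxy}D(e_{xyy})=\sum_{x\le z\le y}D(e_{xzy})$ but uses no induction at all: it evaluates both sides at $(x,x,y)$ and $(x,y,y)$, where the summands for intermediate $z$ cannot contribute (expanded via \cref{D(e_xzy)-as-a-sum} they are supported on triples $(u,z,y)$ and $(x,z,v)$ whose middle entry is $z\notin\{x,y\}$), which already gives $D(e_{xyy})(x,y,y)=D(e_{xxy})(x,x,y)=0$ and hence that the left-hand side vanishes identically; it then evaluates the right-hand side at $(x,x,v)$ with $v\ne y$ and at $(u,y,y)$ with $u\ne x$, where the intermediate terms are again invisible, and reads off the remaining coefficients directly. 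So both of your extra devices --- the preliminary collapse via the relations $e_{uuy}e_{xyy}=0$ and $e_{xxy}e_{xvv}=0$, and the induction on $l(x,y)$ --- are avoidable: the evaluation points can be chosen so that the unknown quantities $D(e_{xzz})(u,z,z)$ and $D(e_{zzy})(z,z,v)$ for $x<z<y$ never enter. What your version buys is a transparent endgame (everything reduces to the two scalar equations $c+d=d$ and $c+d=c$, with the base case free); what it costs is the inductive scaffolding and the additional orthogonality computations, which the paper's pointwise-evaluation argument shows to be unnecessary.
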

\begin{proof}
	Applying $D$ to the product $e_{xxy}e_{xyy}=\sum_{x\le z\le y}e_{xzy}$, we obtain
	\begin{align}\label{D(e_xxy)e_xyy+e_xxyD(e_xyy)=sum-D(e_xzy)}
		D(e_{xxy})e_{xyy}+e_{xxy}D(e_{xyy})=\sum_{x\le z\le y}D(e_{xzy}).
	\end{align}
	By \cref{D-annihilates-e_x,D(e_xy..y)-and-D(e_x..xy)} the left-hand side of \cref{D(e_xxy)e_xyy+e_xxyD(e_xyy)=sum-D(e_xzy)} equals
	\begin{align*}
		&\left(\sum_{x\le v}D(e_{xxy})(x,x,v)e_{xxv}\right)e_{xyy}+e_{xxy}\left(\sum_{u\le y}D(e_{xyy})(u,y,y)e_{uyy}\right)\\
		&\quad=(D(e_{xxy})(x,x,y)+D(e_{xyy})(x,y,y))e_{xxy}e_{xyy}\\
		&\quad=(D(e_{xxy})(x,x,y)+D(e_{xyy})(x,y,y))\sum_{x\le z\le y}e_{xzy}.
	\end{align*}
	Now, by \cref{D(e_xzy)-as-a-sum} the right-hand side of \cref{D(e_xxy)e_xyy+e_xxyD(e_xyy)=sum-D(e_xzy)} is
	\begin{align*}
		\sum_{x\le z\le y}\left(\sum_{u\le z}D(e_{xzz})(u,z,z)e_{uzy}+\sum_{z\le v}D(e_{zzy})(z,z,v)e_{xzv}\right).
	\end{align*}
	Evaluating both sides of \cref{D(e_xxy)e_xyy+e_xxyD(e_xyy)=sum-D(e_xzy)} at $(x,x,y)$ we conclude that
	\begin{align*}
		D(e_{xxy})(x,x,y)+D(e_{xyy})(x,y,y)=D(e_x)(x,x,x)+D(e_{xxy})(x,x,y),
	\end{align*}
	whence $D(e_{xyy})(x,y,y)=D(e_x)(x,x,x)=0$. Similarly, calculating the values of both sides of \cref{D(e_xxy)e_xyy+e_xxyD(e_xyy)=sum-D(e_xzy)} at $(x,y,y)$ we see that
	\begin{align*}
		D(e_{xxy})(x,x,y)+D(e_{xyy})(x,y,y)=D(e_{xyy})(x,y,y)+D(e_y)(y,y,y),
	\end{align*}
	so that $D(e_{xxy})(x,x,y)=0$. Thus, the left-hand side of \cref{D(e_xxy)e_xyy+e_xxyD(e_xyy)=sum-D(e_xzy)} is zero. Now, taking the value of the right-hand side of \cref{D(e_xxy)e_xyy+e_xxyD(e_xyy)=sum-D(e_xzy)} at $(x,x,v)$ with $x\le v\ne y$ we get $D(e_{xxy})(x,x,v)=0$ for all $x\le v\ne y$. Similarly, taking its value at $(u,y,y)$ with $x\ne u\le y$, we obtain $D(e_{xyy})(u,y,y)=0$ for all $x\ne u\le y$. This completes the proof of \cref{D(e_xxy)=D(e_xyy)=zero} in view of \cref{D-annihilates-e_x,D(e_xy..y)-and-D(e_x..xy)}.
\end{proof}

\begin{cor}\label{D-annihilates-e_xzy}
	Let $D$ be a derivation of $I^3(P,R)$. Then for all $x<z<y$ 
	\begin{align}\label{D(e_xzy)=0}
	D(e_{xzy})=0.
	\end{align}
\end{cor}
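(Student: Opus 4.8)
The final statement to prove is Corollary~\ref{D-annihilates-e_xzy}, which asserts that a derivation $D$ of $I^3(P,R)$ annihilates every basis element $e_{xzy}$ with $x<z<y$. The plan is to exploit the factorization $e_{xzy}=e_{xzz}e_{zzy}$, which was the key identity used in \cref{D(e_xzy)-as-a-sum} to compute $D(e_{xzy})$ in terms of the values $D(e_{xzz})(u,z,z)$ and $D(e_{zzy})(z,z,v)$. Since the elements appearing here are precisely of the form $e_{xzz}=e_{(x,z,z)}$ and $e_{zzy}=e_{(z,z,y)}$ with $x<z$ and $z<y$, they fall under the scope of \cref{D-annihilates-e_xxy-and-e_xyy}, which establishes $D(e_{xxy})=D(e_{xyy})=0$ for all $x<y$.

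Concretely, I would apply $D$ to the identity $e_{xzy}=e_{xzz}e_{zzy}$ using the Leibniz rule to get $D(e_{xzy})=D(e_{xzz})e_{zzy}+e_{xzz}D(e_{zzy})$. By \cref{D-annihilates-e_xxy-and-e_xyy}, writing $e_{xzz}=e_{(x,z,z)}$ with $x<z$ (so it is of the ``$e_{xyy}$'' type) gives $D(e_{xzz})=0$, and writing $e_{zzy}=e_{(z,z,y)}$ with $z<y$ (so it is of the ``$e_{xxy}$'' type) gives $D(e_{zzy})=0$. Both terms on the right-hand side therefore vanish, yielding $D(e_{xzy})=0$ immediately.

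There is essentially no obstacle here, since this corollary is a direct consequence of the preceding lemma combined with the Leibniz rule; the substantive work was already carried out in \cref{D-annihilates-e_xxy-and-e_xyy}. The only point requiring minor care is the bookkeeping that matches $e_{xzz}$ and $e_{zzy}$ to the two cases $e_{xyy}$ and $e_{xxy}$ covered by that lemma, but this is a trivial relabeling of indices. Thus the proof reduces to a one-line application of the derivation property to the factorization $e_{xzy}=e_{xzz}e_{zzy}$.
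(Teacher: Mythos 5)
Your proof is correct and takes essentially the same route as the paper: the paper's proof cites \cref{D(e_xzy)-as-a-sum} (which was itself obtained by applying the Leibniz rule to the very factorization $e_{xzy}=e_{xzz}e_{zzy}$ you use) together with \cref{D-annihilates-e_xxy-and-e_xyy}, whereas you simply inline that Leibniz computation and observe that both terms vanish outright. Nothing is missing, and your index bookkeeping ($e_{xzz}$ as the ``$e_{xyy}$'' case, $e_{zzy}$ as the ``$e_{xxy}$'' case) is exactly right.
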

\begin{proof}
	This follows from \cref{D(e_xzy)-as-a-sum,D-annihilates-e_xxy-and-e_xyy}.
\end{proof}

\begin{thrm}\label{derivations-I^3(P_R)}
	Let $D$ be a derivation of $I^3(P,R)$. Then $D=0$.
\end{thrm}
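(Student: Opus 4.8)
The plan is to observe that, once the preceding lemmas are in hand, the theorem reduces to a single bookkeeping step. Since $P$ is finite, the family $\{e_{xyz}\mid x\le y\le z\}$ is an $R$-basis of $I^3(P,R)$, so the $R$-linear map $D$ is completely determined by its values on these basis elements. It therefore suffices to verify that $D(e_{xyz})=0$ for every admissible triple $x\le y\le z$.

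The key step is to partition the admissible triples according to the trichotomy applied to the two comparisons $x\le y$ and $y\le z$. This yields exactly four mutually exclusive and exhaustive patterns: $x=y=z$, $x=y<z$, $x<y=z$, and $x<y<z$. First I would dispose of the diagonal case $x=y=z$, where $e_{xyz}=e_x$ and the vanishing is precisely \cref{D-annihilates-e_x}. Next, for the two boundary patterns $x=y<z$ and $x<y=z$, the elements are $e_{xxz}$ and $e_{xyy}$, whose vanishing is \cref{D-annihilates-e_xxy-and-e_xyy}. Finally, for the interior pattern $x<y<z$ (a strictly increasing triple), \cref{D-annihilates-e_xzy} supplies the vanishing.

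Having shown $D(e_{xyz})=0$ on every basis element, $R$-linearity of $D$ forces $D=0$. I do not expect any genuine obstacle here: the whole force of the argument has already been expended in the lemmas, and the only thing left to check is that the four patterns above really exhaust the index set $P^3_\le$, which is immediate. In effect, the theorem is just the assembly of \cref{D-annihilates-e_x,D-annihilates-e_xxy-and-e_xyy,D-annihilates-e_xzy} into a single statement about a spanning set.
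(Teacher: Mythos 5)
Your proposal is correct and is exactly the paper's argument: the paper's proof is the one-line assembly of \cref{D-annihilates-e_x,D-annihilates-e_xxy-and-e_xyy,D-annihilates-e_xzy}, and you have simply made explicit the case split on the basis $\{e_{xyz}\mid x\le y\le z\}$ and the appeal to $R$-linearity. No gaps; nothing further is needed.
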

\begin{proof}
	A consequence of \cref{D-annihilates-e_x,D-annihilates-e_xxy-and-e_xyy,D-annihilates-e_xzy}.
\end{proof}
	\section*{Acknowledgements}
	This work was partially  supported by CNPq 404649/2018-1. The author is grateful to Professor Max Wakefield for fruitful discussions on partial flag incidence algebras, to Professor Ivan Kaygorodov for useful comments on non-associative algebras and to the referee for the suggestion to introduce the notation which helped to improve \cref{sec-pflinc}.

	\bibliography{bibl}{}
	\bibliographystyle{acm}

\end{document}